\documentclass[aos]{imsart}

\RequirePackage{amsthm,amsmath,amsfonts,amssymb,mathrsfs}
\RequirePackage[numbers,sort&compress]{natbib}
\RequirePackage[colorlinks,citecolor=blue,urlcolor=blue]{hyperref}
\RequirePackage{graphicx}
\RequirePackage{tikz,booktabs}
\startlocaldefs
\theoremstyle{plain}

\newtheorem{theorem}{Theorem}[section]
\newtheorem{lemma}[theorem]{Lemma}
\newtheorem{corollary}[theorem]{Corollary}

\theoremstyle{definition}

\endlocaldefs

\begin{document}

\begin{frontmatter}
\title{The joint density of top k eigenvectors and principal subspace inference}
\runtitle{Top k principal eigenvectors}

\begin{aug}
\author[A]{\fnms{Koki}~\snm{Shimizu}\ead[label=e1]{k-shimizu@rs.tus.ac.jp}
\orcid{0000-0001-5201-4218}}
\author[B]{\fnms{Haoming}~\snm{Wang}\thanks{\textbf{Corresponding author}}\ead[label=e2]{wanghm37@nankai.edu.cn}\orcid{0009-0008-5023-6309}}
\address[A]{Department of Applied Mathematics,
Tokyo University of Science \printead[presep={,\ }]{e1}}

\address[B]{Center for Combinatorics,
Nankai University \printead[presep={,\ }]{e2}}
\end{aug}

\begin{abstract}
In this paper, the exact joint density of the top $k$ sample eigenvectors is derived for any $p\times p$ population covariance matrix $\varSigma$, sample size $n > p -1$, and $1 \le k \le p$. Using symmetric functions such as zonal polynomials and a dual summation identity by I. G. Macdonald, the result is expressed as a series in terms of determinants of differential operators. A matrix Kummer transformation converts the resulting local alternating expansion into a globally absolutely convergent series over the entire positive definite matrix space. For \(k=2\), the ordered eigenvalue integrals reduce to the Gauss hypergeometric function \({}_2F_1\) with a simple closed form when $n=p+1$. Previously, explicit formulas were only available for a scalar matrix $\varSigma = \sigma^2 I_p$ or a general matrix $\varSigma$ with $p=2$ or $k=1$, obtained by T. W. Anderson and T. Sugiyama, respectively. The frame law also induces an exact Grassmann density that provides a finite-sample benchmark for principal subspace inference.
\end{abstract}

\begin{keyword}[class=MSC]
\kwd[Primary ]{62H25}
\kwd[; secondary ]{05E05}
\kwd{33C45}
\kwd{62H10}
\end{keyword}

\begin{keyword}
\kwd{Sample covariance matrix}
\kwd{Top $k$ eigenvectors}
\kwd{Exact density}
\kwd{Subspace estimation}
\end{keyword}

\end{frontmatter}

\section{Introduction}\label{sec:introduction}

Principal Component Analysis (PCA) is a basic tool in classical multivariate statistical analysis and remains one of the most widely used methods in dimension reduction. The distribution theory of sample eigenvalues from a multivariate normal population is well developed, while the exact laws for the corresponding eigenvectors are available only in a few special cases. In particular, T. W. Anderson \cite{anderson1951asymptotic} and T. Sugiyama \cite{Sugiyama1965} studied the distribution of sample eigenvectors for a scalar population covariance matrix ($\varSigma = \sigma^2 I$) and a bivariate normal population, respectively. This gap matters whenever inference concerns several principal directions simultaneously, as in the construction of simultaneous confidence regions or confidence cones for covariance eigenspaces \cite{Tyler1981, Beran1988, NaumovSpokoinyUlyanov2019, JirakWahl2024}. The sample directions are constrained to be mutually orthogonal, and if the population covariance matrix is non-isotropic
(\(\varSigma\neq\sigma^2I\)), they are neither independent nor
rotationally invariant. This raises the natural question of whether one can derive the joint density of the leading sample eigenvectors for an arbitrary population covariance matrix.


\subsection{Statistical setting and previous work}

Suppose our data matrix $X = [{\rm x}_1;{\rm x}_2;\dots;{\rm x}_n]$ consists of $n$ independent identically distributed $p$-dimensional row vectors ${\rm x}_i$, $i=1,2,\dots,n$, each distributed according to a multivariate normal distribution $ N_p(0,\varSigma)$, where $\varSigma$ is a $p\times p$ real symmetric positive definite matrix. The fundamental goal of PCA is to find the top $k$ eigenvectors according to the $1\dots k$ largest eigenvalues of the sample covariance matrix $S = X'X$, namely the Wishart matrix, 
\[
h_j=\underset{\substack{\|h\|=1\\h\perp h_1,\ldots,h_{j-1}}}
{\operatorname{argmax}}h'Sh,\quad j=1,\ldots,p,
\]
and to determine the number \(k\) of principal components to be retained. Let $\lambda_j = h_j'Sh_j$. A common explained-variance rule selects the smallest $k$ such that
\[
\frac{\sum_{j=1}^k\lambda_j}{\sum_{j=1}^p\lambda_j}\ge 1-\alpha,
\quad 0<\alpha<1.
\]
Assuming $n> p-1$, the sample covariance matrix $X'X$ lies in a subspace with co-dimension $\ge 1$ with probability zero due to continuous density, so we can assume there are no repeated eigenvalues and $\lambda_1>\lambda_2>\dots \lambda_p > 0$. Each $h_j$ is determined only up to sign. Throughout, the density is understood under the angular and sign convention introduced in Section \ref{sec:orthogonal}. If only the principal subspace is of interest, the sign-invariant object is the projection $\sum_{j=1}^k h_jh_j'$.

\begin{figure}[!ht]
    \centering
    \includegraphics[width=1\linewidth]{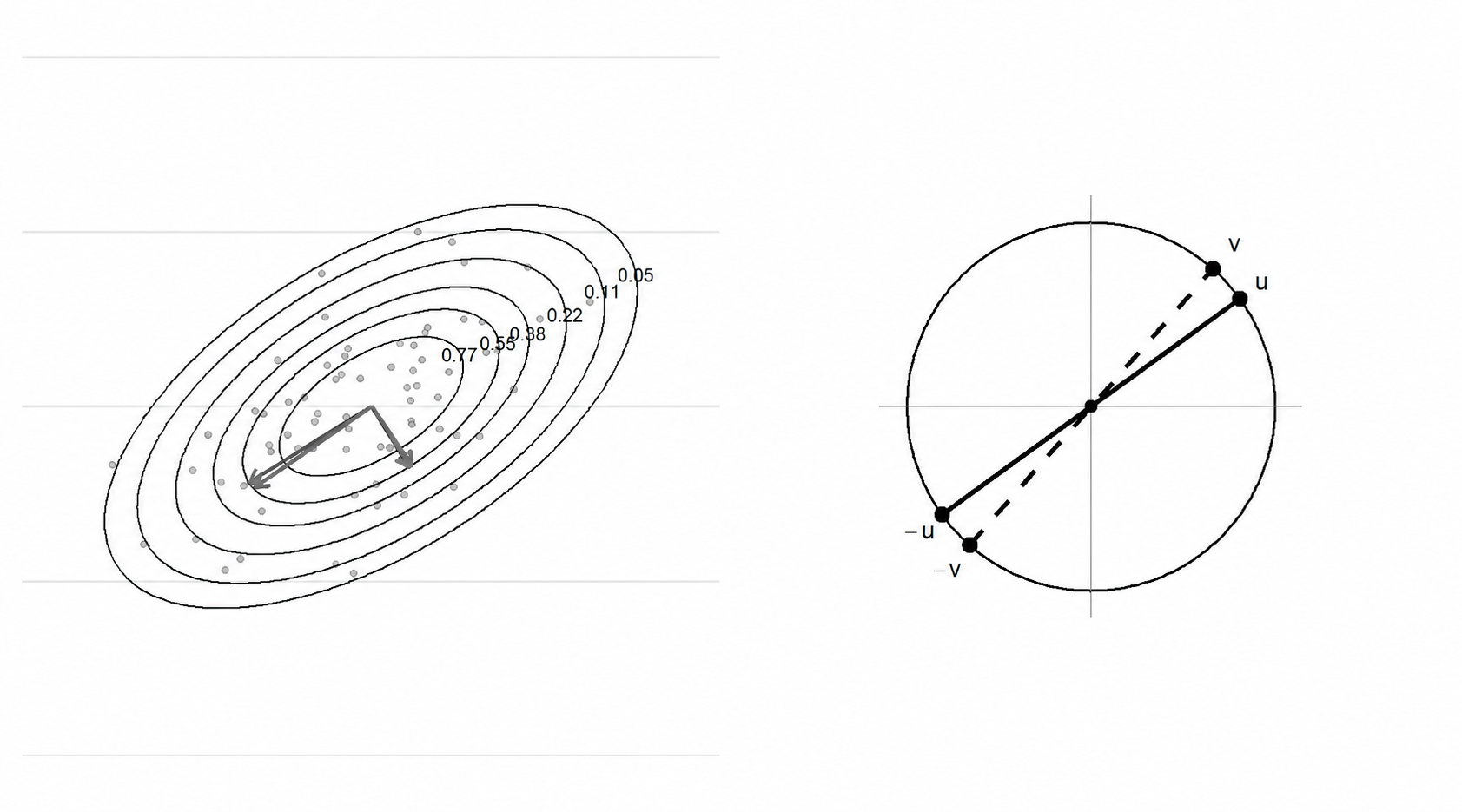}
    \caption{Left: contours of a bivariate Gaussian density and principal directions estimated from 70 observations. Black arrows indicate the population principal directions and standard deviations, and grey arrows indicate their sample estimates. Right: the Grassmannian $\mathbb{G}(2,1)$ identifies each line through the origin with a pair of antipodal points on $\mathbb{S}^1$.}
    \label{fig:pca2d}
\end{figure}

Thus, the question of PCA is two-fold. The joint distribution of eigenvalues of a sample covariance matrix is then guaranteed by a folklore lemma in random matrix theory, for example, \cite[Lemma 3.2.17]{muirhead1982aspects}. This lemma states that if $S$ is an $p \times p$ positive definite random matrix with continuous probability density $f(S)$, then the joint density function of the eigenvalues $\lambda_1, \dots, \lambda_p$ of $S$, arranged in decreasing order $\lambda_1 > \lambda_2 > \dots > \lambda_p> 0$ is\[\begin{aligned}    \frac{\pi^{{p^2}/{2}}}{\varGamma_p(p/2)}\prod_{1\le i < j \le p}(\lambda_i - \lambda_j) \int_{O(p)} f(H\varLambda H') [dH], \end{aligned}\] where $\varLambda = {\rm diag}(\lambda_i)$ is the diagonal matrix in eigenvalues $\lambda_1, \dots, \lambda_p$ of $S$, $\varGamma_p(a) = \pi^{{p(p-1)}/{4}}\prod_{i=1}^{p} \varGamma \left(a - {(i-1)}/{2}\right)$ is the $p$-variate Gamma function defined for $a > (p-1)/2$, and the integration is taken over the compact group of $p\times p$ orthogonal matrices $O(p) = \{H’H = HH' =  I_p\}$ with respect to the normalized Haar measure $[dH]$. 

From this, we can see that the change of variables from a symmetric matrix to its ordered eigenvalues and eigenvectors introduces the Vandermonde factor
\[
\varDelta(\varLambda)=\prod_{1\le i<j\le p}(\lambda_i-\lambda_j).
\]
The Vandermonde factor makes many eigenvalue marginals accessible through invariant integration and symmetric polynomial expansions, and it underlies a large literature on exact and asymptotic laws for Wishart roots \cite{james1960distribution,constantine1963some,khatri1972exact,johnstone2001distribution}.

The situation for eigenvectors is more delicate. If $\varSigma=\sigma^2I_p$, early investigations by T. W. Anderson \cite{anderson1951asymptotic} established that eigenvectors are uniformly distributed over the orthogonal group $O(p)$, subject to the chosen sign convention, and are independent of the ordered eigenvalues. For a general $\varSigma$, this rotational symmetry is broken, and the sample eigenvectors concentrate around the population principal directions. Classical asymptotic theory was developed by T. W. Anderson \cite{anderson1951asymptotic,anderson1963asymptotic} and further refined by Sugiura \cite{SUGIURA1976500}, Bai, Miao, and Pan \cite{Bai2007}.  Exact finite-sample results are much rarer. Sugiyama
obtained formulas for the cases \(p=2\) \cite{Sugiyama1965} and
\(k=1\) \cite{Sugiyama1966}. The exact joint density of an arbitrary leading $k$-frame, however, has not previously been available.

\subsection{The coupling problem and our contribution}
For $k\ge2$, the difficulty is not merely that the leading sample eigenvectors must be treated simultaneously, leading to $p(p-1)/2$-dimensional orthogonality constraint. To obtain the marginal law of the top $1< k < p$ eigenvectors, one must integrate out both the remaining eigenvectors and the nuisance eigenvalues
\[
0<\lambda_p<\cdots<\lambda_{k+1}<\lambda_k.
\]
The Vandermonde factor naturally decomposes as
\[
\varDelta(\varLambda)
=
\underbrace{\prod_{1\le i<j\le k}(\lambda_i-\lambda_j)}_{\varDelta_{11}}
\underbrace{\prod_{k\le i<j\le p}(\lambda_i-\lambda_j)}_{\varDelta_{22}}
\underbrace{\prod_{\substack{1\le i< k<j\le p}}(\lambda_i-\lambda_j)}_{\varDelta_{12}}.
\]
The mixed term $\varDelta_{12}$ couples the leading and nuisance eigenvalues and prevents the nuisance integral from factorizing. In root-system terminology, this is an $A_{p-1}$ interaction. Standard Selberg-Jack integrals \cite{KADELL199733,warnaar2009} handle the within-block terms, but not the mixed factor in this form. These
integrals are addressed by a dual sum identity by I. G. Macdonald \cite{Macdonal1992} and a determinant of commuting fractional differential operators $\mathscr D_i^{z}$, characterized by
\(\mathscr D_i^ze^{-q_i\lambda_i/2}=\lambda_i^z
e^{-q_i\lambda_i/2}\). These operators act on the ordered exponential
kernel
\[
\varPhi_k(\underline q)
=
\frac{2^k}{
q_1(q_1+q_2)\cdots(q_1+\cdots+q_k)
}, \quad q_{0j}=h_j'\varSigma^{-1} h_j, j =1,2, \dots, k.
\]

Our derivation resolves this coupling in three stages. First, we integrate over the remaining orthogonal degrees of freedom and express the result as a series of zonal polynomials. Second, after separating the factors involving $\lambda_1, \dots, \lambda_k$, we apply Macdonald's dual identity for Jack polynomials to expand the remaining mixed Vandermonde term. The nuisance eigenvalues can then be integrated by a Selberg-Jack formula. Third, we transform the integral over the ordered top eigenvalues to spacing coordinates and represent its powers by commuting fractional differential operators. A Schur expansion packages these operators into a determinant.

\subsection{Comparison with Sugiyama's formula}

In \cite{Sugiyama1966}, the result of Sugiyama states for some parametrization of the primary eigenvector $$h_1 = \Bigl(\cos \theta_{12}, \sin \theta_{12}\cos \theta_{13}, \dots, \prod_{j=2}^{p-1} \sin \theta_{1j} \cos\theta_{1p}\Bigr)'$$ and an orthogonal matrix $H_1$ with $h_1$ the first column, the density is given by
\begin{equation}
    \begin{aligned}	& |\varSigma|^{-{n}/{2}}\frac{\varGamma\left({n}/{2}\right)}{\varGamma_p\left({n}/{2}\right)}\cdot B_{p-1}\left(\frac{p+2}{2}, \frac{n-1}{2}\right)	\sum_{m=0}^{\infty} \frac{(pn/2)_m}{m!} 	\sum_{\kappa \vdash m} \frac{\left[\frac{1}{2}(n-1)\right]_\kappa}{\left[\frac{1}{2}(n+p+1)\right]_\kappa} \\ & \qquad \qquad \times \ C_{\kappa}\!\left(-\frac{\varSigma_{p-1}}{h_1' \Sigma^{-1} h_1}\right) (h_1' \varSigma^{-1} h_1)^{-pn/2}	\prod_{j=2}^{p-1} \sin^{p-j}\theta_{1j} \bigwedge_{j=2}^{p} d\theta_{1j},\end{aligned}\label{eq:sugiyama66}
\end{equation}
where $B_p(a,b) = {\varGamma_p(a)\varGamma_p(b)}/{\varGamma_p(a+b)}$ is the $p$-variate Beta function, $(a)_{\kappa,\alpha} = \prod_{i=1}^{p} (a - {(i-1)}/{\alpha})_{\kappa_i}, (a)_{\kappa} = (a)_{\kappa,2},$ $\varSigma_{p-1} = \left(H_1\varSigma^{-1}H_1\right)_{2:p;2:p}$ is obtained by 
\[
H_1\varSigma^{-1}H_1= \begin{pmatrix}
    * & *\\
    * & \varSigma_{p-1}
\end{pmatrix},\]
and $C_{\kappa}$ the zonal polynomial indexed by the integer partition $\kappa$. 
However, the above density function is expressed as an alternating series and may not converge numerically over the domain of definition. \cite{Ishizaki2009} derived a positive-term series representation through the Kummer transformation and also provided graphs of the density function for $p = 3$. 

In this paper, we will show that for some parametrization $\theta_{ij},1\le i < j \le p,$ of the orthogonal group $O(p)$ and $h_1,h_2,\dots,h_k$ are the first $k$ columns in the $p\times p$ matrix $H_k$ where $$H(\theta_{ij}) = H_k\begin{pmatrix} I_k & 0\\0 & H_{p-k}\end{pmatrix},$$ the joint probability density function of $h_1,h_2,\dots,h_k$ is given by a globally absolutely convergent series for every positive definite covariance matrix
\begin{align*}
&\frac{B_m(a,b)}
{2^{np/2}\varGamma_p(n/2)|\varSigma|^{n/2}}
\sum_{r=0}^{\infty}
\sum_{\substack{\mu\vdash r\\\ell(\mu)\leq m}}
\frac{C_\mu(\tfrac12\varSigma_m)}{r!C_\mu(I_m)}
\sum_{s=0}^{m(k-1)}\frac{1}{s!}
\sum_{\substack{\nu\vdash s,\ \nu_1\leq k-1\\
\ell(\nu)\leq m}}\widetilde  B_{\mu\nu}
\sum_{\substack{\sigma\vdash s,\ \sigma\leq\nu'\\
\ell(\sigma)\leq k-1}}
\notag\\
&\qquad\times
K_{\nu'\sigma}(1/2) \prod_{i=1}^k\prod_{j=i+1}^{p-1}
\sin^{p-j}\theta_{ij}
\bigwedge_{i=1}^k\bigwedge_{j=i+1}^{p}d\theta_{ij}\times 
\mathscr D_k^{v+u+m+r+s}
\prod_{i=1}^{d}\mathscr D_i^v
\notag\\
&\qquad\times
\begin{vmatrix}
(\mathscr D_1-\mathscr D_k)^{m+d-\sigma_d}
& (\mathscr D_1-\mathscr D_k)^{m+d-1-\sigma_{d-1}}
& \cdots
& (\mathscr D_1-\mathscr D_k)^{m+1-\sigma_1}\\
(\mathscr D_2-\mathscr D_k)^{m+d-\sigma_d}
& (\mathscr D_2-\mathscr D_k)^{m+d-1-\sigma_{d-1}}
& \cdots
& (\mathscr D_2-\mathscr D_k)^{m+1-\sigma_1}\\
\vdots & \vdots & \ddots & \vdots\\
(\mathscr D_d-\mathscr D_k)^{m+d-\sigma_d}
& (\mathscr D_d-\mathscr D_k)^{m+d-1-\sigma_{d-1}}
& \cdots
& (\mathscr D_d-\mathscr D_k)^{m+1-\sigma_1}
\end{vmatrix}
\varPhi_k(\underline{\tilde q})
\Big|_{\underline{\tilde q}=\underline{\tilde q}_0},
\end{align*}
where $d=k-1, m=p-k, a = (n - k)/2, b = (p-k + 3)/2, u=(p-k)(n-k)/2 ,  v = {(n - p-1)}/{2}$, \(\tilde q_{0k}=q_{0k}+\operatorname{tr}\varSigma_m\), $\varSigma_{m} = \left(H_k'\varSigma^{-1}H_k\right)_{k+1:p;k+1:p}$ is obtained by
\[
H_k'\varSigma^{-1}H_k= \begin{pmatrix}
    * & *\\
    * & \varSigma_{m}
\end{pmatrix}.\]
In particular, \(\nu'\) is the conjugate partition of \(\nu\), the coefficients \(K_{\nu'\sigma}(1/2)\) are defined by the Jack-Schur expansion, and $\widetilde B_{\mu\nu}$ is given by the Littlewood-Richardson coefficients $g_{\mu\nu}^{\phi}$ for zonal polynomials,
\[
\begin{aligned}
\widetilde B_{\mu\nu}
=
\sum_{\substack{
\phi\vdash|\mu|+|\nu|\\
\ell(\phi)\leq m
}}
& g_{\mu\nu}^{\phi}
\frac{(b)_\phi}{(a+b)_\phi}
C_\phi(I_m).
\end{aligned}\]

The resulting density is valid for $1\le k\le p$ and arbitrary positive definite $\varSigma$. For \(k=1\), the dual sum disappears and 
it reduces to a globally convergent Kummer transform of Sugiyama's formula when $k=1$ and provides a finite-sample basis for simultaneous inference on the leading principal subspace.  

This paper is organized as follows. Sections~\ref{sec:symmetric-functions} and~\ref{sec:jack-zonal} review the required symmetric function and Jack polynomial identities. Section~\ref{sec:fractional-operators} defines	the fractional operators, Section~\ref{sec:orthogonal} introduces the orthogonal	parametrization, and Section~\ref{sec:joint-density} derives both the local and globally convergent series for the joint density. Section~\ref{sec:k2-explicit} gives for $k=2$ the \({}_2F_1\) representation and the elementary specialization when \(n=p+1\). Section~\ref{sec:principal-subspace} then pushes the frame
	law to the Grassmannian and develops exact finite-sample tests,
	confidence regions, bootstrap diagnostics, and power comparisons for
	inference on principal subspaces.



\section{Partitions, Tableaux, and Symmetric Functions}
\label{sec:symmetric-functions}

Our distributional results rely on expansions of symmetric functions
indexed by integer partitions. This section fixes the notation and
reviews only the combinatorial facts used later.

\subsection{Partitions and Young diagrams}

An integer partition of \(n\), denoted by \(\lambda\vdash n\), is a
weakly decreasing sequence
\(\lambda=(\lambda_1,\ldots,\lambda_k)\) of nonnegative integers such
that \(\sum_{i=1}^k\lambda_i=n\). Its weight is
\(|\lambda|=n\), and its length \(\ell(\lambda)\leq k\) is the number
of positive parts.

The Young diagram of \(\lambda\) is the set of cells \((i,j)\) such
that \(1\leq i\leq\ell(\lambda)\) and
\(1\leq j\leq\lambda_i\). For example, the five partitions of \(4\)
are \((4)\), \((3,1)\), \((2,2)\), \((2,1,1)\), and
\((1,1,1,1)\).
\[
\begin{tikzpicture}[scale=0.6, baseline=(current bounding box.center)]
\draw (0,0) grid (4,1);
\node[below=0.2cm] at (2,0) {$\lambda=(4)$};
\end{tikzpicture}
\qquad
\begin{tikzpicture}[scale=0.6, baseline=(current bounding box.center)]
\draw (0,0) grid (3,1);
\draw (0,-1) rectangle (1,0);
\node[below=0.2cm] at (1.5,-1) {$\lambda=(3,1)$};
\end{tikzpicture}
\qquad
\begin{tikzpicture}[scale=0.6, baseline=(current bounding box.center)]
\draw (0,-1) grid (2,1);
\node[below=0.2cm] at (1,-1) {$\lambda=(2,2)$};
\end{tikzpicture}
\qquad
\begin{tikzpicture}[scale=0.6, baseline=(current bounding box.center)]
\draw (0,0) grid (2,1);
\draw (0,-2) grid (1,0);
\node[below=0.2cm] at (1,-2) {$\lambda=(2,1,1)$};
\end{tikzpicture}
\qquad
\begin{tikzpicture}[scale=0.6, baseline=(current bounding box.center)]
\draw (0,-3) grid (1,1);
\node[below=0.2cm] at (0.5,-3) {$\lambda=(1,1,1,1)$};
\end{tikzpicture}
\]
The conjugate partition $\lambda'$ is obtained by transposing the diagram, where $\lambda'_j$ is the number of cells in the $j$-th column. For example, the conjugate partition of $(3,1)$ is $(2,1,1)$.
\[\begin{tikzpicture}[scale=0.6, baseline=(current bounding box.center)]
\foreach \x in {0,1,2} \draw (\x,0) rectangle (\x+1,1);
\draw (0,-1) rectangle (1,0);
\node[right=0.5cm] at (3,0) {$\lambda=(3,1)$};
\end{tikzpicture}
\qquad
\begin{tikzpicture}[scale=0.6, baseline=(current bounding box.center)]
\foreach \x in {0,1} \draw (\x,0) rectangle (\x+1,1);
\draw (0,-1) rectangle (1,0);
\draw (0,-2) rectangle (1,-1);
\node[right=0.5cm] at (2,-0.5) {$\lambda'=(2,1,1)$};
\end{tikzpicture}\]
Algebraically, the $i$-th part of the conjugate partition, $\lambda'_i$, corresponds to the number of cells in the $i$-th column of the original diagram, or equivalently, the number of parts in $\lambda$ that are greater than or equal to $i$.

\subsection{Skew diagrams and tableaux}

For partitions \(\lambda\) and \(\mu\) of the same weight, write
\(\mu\leq_D\lambda\) if $\mu_1 + \mu_2 + \dots +\mu_i \le \lambda_1 + \lambda_2 + \dots + \lambda_i$ for all $i$. This is the dominance order. The lexicographic order is the total
order in which \(\mu<_{\mathrm{lex}}\lambda\) if, at the first index
where the parts differ, \(\mu_i<\lambda_i\). Dominance implies the
corresponding lexicographic order, but not conversely. For example,
\((3,3)<_{\mathrm{lex}}(4,1,1)\), while these two partitions of \(6\)
are incomparable in dominance order.

If the diagram of \(\mu\) is contained in that of \(\lambda\), the
skew diagram \(\lambda/\mu\) is obtained by removing the cells of
\(\mu\) from \(\lambda\).
\[\begin{tikzpicture}[scale=0.6, baseline=(current bounding box.center)]
\draw (1,0) rectangle (2,1);
\draw (2,0) rectangle (3,1);
\draw (1,-1) rectangle (2,0);
\draw (0,-2) rectangle (1,-1);
\node[below=0.5cm] at (1.5,-2) {$\nu=(3,2,1)/(1,1)$};
\end{tikzpicture}
\qquad
\begin{tikzpicture}[scale=0.6, baseline=(current bounding box.center)]
\draw (2,0) rectangle (3,1);
\draw (3,0) rectangle (4,1);
\draw (1,-1) rectangle (2,0);
\draw (2,-1) rectangle (3,0);
\node[below=0.5cm] at (1.5,-1) {$\nu=(4,3)/(2,1)$};
\end{tikzpicture}\]
A standard (semi-Standard) Young tableau of shape $\lambda / \mu$ is a mapping $T$ from the cells of the skew diagram to the set of positive integers $\mathbb{N}_{\ge1}$ that satisfies two conditions
\begin{enumerate}
    \item The entries are strictly (weakly) increasing along each row from left to right.
    \item The entries are strictly increasing down each column from top to bottom.
\end{enumerate}
Recording the number of times each number appears in a tableau gives a sequence known as the weight of the tableau. Thus, the Standard Young Tableaux (SYT) are precisely the Semi-Standard Young Tableaux (SSYT) of weight $(1,1,\dots,1)$, which requires every integer up to $n$ to occur exactly once.

\[\begin{tikzpicture}[scale=0.6]
    \begin{scope}[shift={(0,0)}]
        \draw (0,0) grid (3,-1);
        \draw (0,-1) grid (2,-2);
        \draw (0,-2) grid (1,-3);
        
        \node at (0.5,-0.5) {1}; \node at (1.5,-0.5) {1}; \node at (2.5,-0.5) {2};
        \node at (0.5,-1.5) {2}; \node at (1.5,-1.5) {3};
        \node at (0.5,-2.5) {3};
        \node[below=0.5cm] at (1,-2.5) {SSYT with weight $(2,2,2)$};
    \end{scope}
    \begin{scope}[shift={(8,0)}]
        \draw (0,0) grid (3,-1);
        \draw (0,-1) grid (2,-2);
        \draw (0,-2) grid (1,-3);
        
        \node at (0.5,-0.5) {1}; \node at (1.5,-0.5) {2}; \node at (2.5,-0.5) {5};
        \node at (0.5,-1.5) {3}; \node at (1.5,-1.5) {4};
        \node at (0.5,-2.5) {6};
        \node[below=0.5cm] at (1,-2.5) {SYT with weight $(1,1,\dots,1)$};
    \end{scope}
\end{tikzpicture}\]

\subsection{Symmetric groups over alphabet $1\dots n$}  The symmetric group \(\mathfrak S_n\) consists of the \(n!\) permutations of \(\{1,\ldots,n\}\). It enters the theory of symmetric functions through the Frobenius characteristic map and through its action on exponent sequences $x_1^{\alpha_1}\dots x_k^{\alpha_k}$. The stabilizer is defined as 
\(\operatorname{Aut}_N(\lambda)=\{\sigma\in\mathfrak S_N:
\sigma\lambda=\lambda\}\) for $\ell(\lambda)\le N$ by appending zeros to $\lambda$.

\subsection{Symmetric functions}

A symmetric function in variables \(x_1, x_2, \dots\) is a formal power series that is invariant under any permutation of the variables and has bounded degree. The ring of symmetric functions \(\Lambda\) is the graded ring consisting of such series.  The most common symmetric function bases are the following
\begin{itemize}
\item ({Monomial}) \(
m_\lambda = \frac{1}{|\mathrm{Aut}_N(\lambda)|} \sum_{\sigma \in \mathfrak{S}_N} x_{\sigma(1)}^{\lambda_1} \cdots x_{\sigma(N)}^{\lambda_N}\).

\item ({Power Sum}) $p_\lambda = \prod_{i=1}^{\ell(\lambda)} p_{\lambda_i}$ where $p_k = m_{(k)} =\sum_{i\ge 1} x_i^k.$

\item ({Elementary}) $e_\lambda = \prod_{i=1}^{\ell(\lambda)} e_{\lambda_i}$ where $ e_k = m_{(1^k)} = \sum_{1\le i_1<\cdots<i_k} x_{i_1}\cdots x_{i_k}$.

\item ({Complete Symmetric}) $h_\lambda = \prod_{i=1}^{\ell(\lambda)} h_{\lambda_i}$ where $ h_k = \sum_{|\kappa| = k} m_{\kappa} = \sum_{1\le i_1\le\cdots\le i_k} x_{i_1}\cdots x_{i_k}.$

\item ({Schur}) $s_\lambda = \det\bigl(h_{\lambda_i - i + j}\bigr)_{i,j} = \det\bigl(e_{\lambda'_i - i + j}\bigr)_{i,j}$.
\end{itemize}

The last two determinants are the Jacobi--Trudi identities. In
addition, for \(n\geq\ell(\lambda)\), the Weyl character formula gives \cite{macdonald1998symmetric}
\begin{equation}
s_\lambda(x_1,x_2,\dots,x_n)=
\frac{\det\bigl(x_i^{\lambda_j+n-j}\bigr)_{1\le i,j\le n}}
{\det\bigl(x_i^{n-j}\bigr)_{1\le i,j\le n}}. \label{lem:Weyl}
\end{equation}
Relations among these bases can be found in
\cite{goulden2004combinatorial}.

\subsection{Irreducible representations of $\mathfrak S_n$}
The irreducible complex representations of the symmetric group \(\mathfrak{S}_n\) are canonically indexed by integer partitions \(\lambda \ \vdash n\). For each \(\lambda\), the dimension \(f^\lambda\) of the corresponding irreducible representation \(\rho^\lambda\) is given by the hook-length formula
\[
f^\lambda = \frac{n!}{\prod_{(i,j) \in \lambda} h(i,j)},
\]
where \(h(i,j)\) is the number of cells to its right in the same row plus the number below it in the same column, counting the cell $(i,j)$ once within the Young diagram of \(\lambda\). 

These representations play a fundamental role in the theory of symmetric functions. By the Frobenius characteristic formula, the Schur function \(s_\lambda\) is the direct algebraic image of the irreducible character \(\chi^\lambda\)
\[
s_\lambda(x) = \frac{1}{n!} \sum_{\pi\in S_n} \chi^\lambda(\pi)   p_{\rho(\pi)}(x),
\]
where \(\rho(\pi)\) denotes the cycle type of the permutation \(\pi\), an integer partition of \(n\) whose parts are the lengths of the cycles in the disjoint cycle decomposition of \(\pi\), and $p_{\cdot}$ the power sum.

While the Frobenius formula provides the group-theoretic definition, the Schur polynomial concurrently admits a powerful combinatorial evaluation. Restricting to a finite number of variables \(x_1,\dots,x_m\), the Schur polynomial \(s_\lambda\) of degree \(|\lambda|\) is identically the generating function of restricted SSYT
\[
s_\lambda(x_1,\dots,x_m) = \sum_{T \in \mathrm{SSYT}(\lambda, m)} \prod_{i=1}^m x_i^{w_i(T)},
\]
where the sum is over all SSYT of shape \(\lambda\) with entries drawn from \(\{1,\dots,m\}\), and \(w_i(T)\) records the multiplicity of the integer \(i\) appearing in tableaux \(T\). 

\section{Jack Polynomials in the Real, Complex and Quaternion}\label{sec:jack-zonal}

	In the real case, zonal polynomials correspond under the
	characteristic map to the zonal spherical functions of the Gelfand
	pair \((\mathfrak S_{2n},\mathfrak H_n)\), where
	\(\mathfrak H_n=\mathfrak S_2\wr\mathfrak S_n\) is the
	hyperoctahedral group \cite{james1961zonal}. They also arise as
	spherical polynomials for the symmetric space
	\(GL_n(\mathbb R)/O(n)\). In the complex case, the analogous
	characteristic map is the Frobenius map
	\(\operatorname{ch}(\chi^\lambda)=s_\lambda\)
	\cite{macdonald1998symmetric}. Jack polynomials interpolate the
	spherical polynomial families associated with the real, complex, and
	quaternionic division algebras. In our convention, their Jack
	parameters are \(\alpha=2\), \(\alpha=1\), and \(\alpha=1/2\),
	respectively.

\subsection{Jack polynomial}
Let $\mathbb Q(\alpha)$ be the field of fractions of the polynomial ring $\mathbb Q[\alpha]$ with rational coefficients in $\mathbb Q$ and indeterminate $\alpha$. If a partition $\lambda$ has $m_i$ parts equal to $i$, then write
\[z_{\lambda} = (1^{m_1}2^{m_2}\dots)m_1!m_2!\dots.\]

Define a bilinear scalar product on the vector space of symmetric functions $\Lambda(\alpha)= \Lambda \otimes \mathbb Q(\alpha)$ with indeterminates in $x = (x_1,x_2,\dots)$ (possibly infinite) and coefficients in $\mathbb Q(\alpha)$ by this condition
\[\langle p_{\lambda}, p_{\mu}\rangle_\alpha = \delta_{\lambda\mu} z_{\lambda} \alpha^{\ell({\lambda})},\]
where $p_\lambda$ is the power-sum symmetric polynomial and $\ell(\lambda)$ is the length of $\lambda$. 

The system of J-normalised Jack symmetric polynomials $J_{\lambda} = J_{\lambda}(x;\alpha)$ satisfies the following axioms
\begin{enumerate}
    \item[(C1)] (Orthogonality) $\left\langle J_\lambda, J_\mu\right\rangle_\alpha=0$ if $\lambda \neq \mu$;
    \item[(C2)] (Triangularity) Expanding them in terms of monomials
    $$ J_\lambda(x;\alpha)=\sum_\mu v_{\lambda \mu}(\alpha) m_\mu(x),$$
    $v_{\lambda \mu}(\alpha)=0$ unless $\mu \leqslant \lambda$ in the dominance ordering.
    \item[(C3)] (Normalization) If $|\lambda|=n$, then the coefficient $v_{\lambda, 1^n}$ in $J_\lambda(x;\alpha)$ is $n!$. 
\end{enumerate}
Once (C1)-(C3) are fulfilled, the Jack polynomial is uniquely determined. See \cite{STANLEY198976} for instance. 

We need to mention the other two normalisations, namely P-normalisation and C-normalisation. The P-normalization is the standard monic normalization for combinatorial identities and duality, while the C-normalization is particularly suited to zonal polynomial expansions and hypergeometric functions of matrix argument in multivariate
statistics.

If $(i, j) \in \lambda$, then the quantity $h_\lambda(i, j)=h(i, j)= \lambda_i+\lambda_j^{\prime}-i-j+1$ is called the hook length at $(i, j)$. We define two $\alpha$-refinements of $h_\lambda(i, j)$ as follows
$$
\begin{aligned}
& h_\lambda^*(i, j)=h^*(i, j)=\lambda_J^{\prime}-i+\alpha\left(\lambda_i-j+1\right) \\
& h_*^{\lambda}(i, j)=h_*(i, j)=\lambda_j^{\prime}-i+1+\alpha\left(\lambda_i-j\right) .
\end{aligned}
$$
We call $h^*(i, j)$ the upper hook-length and $h_*(i, j)$ the lower hook length at $(i, j)$. 
The P-normalised Jack polynomial $P_{\lambda} = P_{\lambda}(x;\alpha)$ is related to $J_{\lambda} (x;\alpha)$ by 
$$J_{\lambda}(x;\alpha) = 
\left(\prod_{s \in \lambda}h^{\lambda}_*(s)\right) P_{\lambda}(x;\alpha)$$
Furthermore, the norm square of $J_\lambda$ is given by the formula
$$j_{\lambda}(\alpha) = \left\langle J_\lambda, J_\lambda\right\rangle_\alpha = \prod_{s \in \lambda}h^{\lambda}_*(s)h_{\lambda}^*(s),$$
so the \(C\)-normalized Jack polynomial $C_{\lambda} = C_{\lambda}(x;\alpha)$ is related to $J_{\lambda} (x;\alpha)$ by
\begin{equation*}
C_\lambda(x;\alpha)
=\frac{\alpha^{|\lambda|}|\lambda|!}{j_\lambda(\alpha)}
J_\lambda(x;\alpha).
\label{eq:Jack-normalizations}
\end{equation*}
When $\alpha = 1$, $s_{\lambda} (x) = P_{\lambda}(x;1)$ reduces to the Schur polynomial and when $\alpha =2$, $C_{\lambda}(x) = C_{\lambda}(x;2)$ is the classical zonal polynomial.

In the following, we need a lemma of I. G. Macdonald \cite{Macdonal1992}. Let $\lambda'$ be the conjugate of $\lambda$.

\begin{lemma}[Dual Identity] 
$\prod_{i,j}(1 + x_iy_j) = \sum_{s\ge0}({1}/{s!}) \sum_{\lambda \vdash s} C_{\lambda}(x;\alpha)J_{\lambda'}(y;1/\alpha).$\label{lem: dual cauchy}
\end{lemma}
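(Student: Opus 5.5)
The plan is to derive the dual identity from the ordinary Cauchy identity for Jack polynomials, combined with the involution $\omega_\alpha$ that exchanges the parameters $\alpha$ and $1/\alpha$. The first ingredient is the Cauchy identity in the $J$-normalization, which follows from orthogonality (C1) and the power-sum form of the scalar product:
\[
\prod_{i,j}(1-x_iy_j)^{-1/\alpha}=\sum_{\lambda}\frac{J_\lambda(x;\alpha)J_\lambda(y;\alpha)}{j_\lambda(\alpha)},
\]
or equivalently $\exp\bigl(\sum_{r\ge1}\frac{1}{r\alpha}p_r(x)p_r(y)\bigr)=\sum_\lambda z_\lambda^{-1}\alpha^{-\ell(\lambda)}p_\lambda(x)p_\lambda(y)$. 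This is the standard fact that $\sum_\lambda u_\lambda(x)v_\lambda(y)$ is the same kernel for any pair of dual bases.

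Next, introduce the automorphism $\omega_\alpha$ of $\Lambda(\alpha)$ defined by $\omega_\alpha(p_r)=(-1)^{r-1}\alpha\,p_r$. Applying it in the $y$ variables turns the exponent into $\sum_r\frac{(-1)^{r-1}}{r}p_r(x)p_r(y)=\sum_{i,j}\log(1+x_iy_j)$. The left side therefore becomes $\prod_{i,j}(1+x_iy_j)$, and the right side becomes $\sum_\lambda J_\lambda(x;\alpha)\,\omega_\alpha J_\lambda(y;\alpha)/j_\lambda(\alpha)$.

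The key step, which I expect to be the main obstacle, is Stanley's duality $\omega_\alpha J_\lambda(y;\alpha)=\alpha^{|\lambda|}J_{\lambda'}(y;1/\alpha)$ (Macdonald VI.10, Stanley 1989). Its proof requires three checks. First, $\omega_\alpha$ carries the $\alpha$-scalar product to the $1/\alpha$-scalar product up to the factor $\alpha^{|\lambda|}$, so orthogonality (C1) transfers. Second, the image is triangular with respect to $\lambda'$ in dominance order; this uses the fact that conjugation reverses dominance and that $\omega$ sends $m_\mu$ to combinations of forgotten functions. Third, the normalization is fixed by comparing extreme coefficients. Since the lemma is quoted from Macdonald, I would cite this duality rather than reprove the triangularity.

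Finally, the normalizations must be matched. The right side equals $\sum_\lambda \frac{\alpha^{|\lambda|}}{j_\lambda(\alpha)}J_\lambda(x;\alpha)J_{\lambda'}(y;1/\alpha)$. By the stated relation $C_\lambda=\frac{\alpha^{|\lambda|}|\lambda|!}{j_\lambda(\alpha)}J_\lambda$, we have $\frac{\alpha^{|\lambda|}}{j_\lambda(\alpha)}J_\lambda=\frac{1}{|\lambda|!}C_\lambda$. Grouping the terms by $s=|\lambda|$ then gives exactly $\sum_s\frac{1}{s!}\sum_{\lambda\vdash s}C_\lambda(x;\alpha)J_{\lambda'}(y;1/\alpha)$, which is the claimed identity. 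As a sanity check, at $\alpha=1$ this reduces to the classical dual Cauchy identity $\sum_\lambda s_\lambda(x)s_{\lambda'}(y)$.
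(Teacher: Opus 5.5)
Your proof is correct. The paper gives no argument of its own and only cites Macdonald, and your derivation is the standard one behind that reference: apply $\omega_\alpha$ in $y$ to $\sum_\lambda J_\lambda(x;\alpha)J_\lambda(y;\alpha)/j_\lambda(\alpha)$, invoke $\omega_\alpha J_\lambda(\cdot;\alpha)=\alpha^{|\lambda|}J_{\lambda'}(\cdot;1/\alpha)$, and rewrite $\alpha^{|\lambda|}J_\lambda/j_\lambda(\alpha)=C_\lambda/|\lambda|!$, which gives exactly the stated normalization.

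Your side sketch of the duality contains two inaccuracies, though neither affects the proof since you cite the result. First, $\omega_\alpha$ is an exact isometry from $\langle\cdot,\cdot\rangle_\alpha$ to $\langle\cdot,\cdot\rangle_{1/\alpha}$, with no extra factor; the $\alpha^{|\lambda|}$ comes only from the normalization, through the coefficient of $p_1^{|\lambda|}$. Second, triangularity comes from $\omega_\alpha e_\mu$ being the $\langle\cdot,\cdot\rangle_{1/\alpha}$-dual basis of $m_\mu$, not from forgotten functions.
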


We also use the product expansion for real zonal polynomials. 
\begin{lemma} \label{lem: LR rule} 
$C_{\lambda}(x)C_{\mu}(x) = \sum_{\nu  \, \vdash |\lambda|+|\mu|} g_{\lambda\mu}^{\nu}C_{\nu}(x)$.
\end{lemma}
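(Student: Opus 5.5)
The plan is to read Lemma~\ref{lem: LR rule} as a basis statement: it says that the product of two zonal polynomials lies in the span of zonal polynomials of the right degree, and the coefficients $g_{\lambda\mu}^{\nu}$ are then \emph{defined} as the expansion coefficients. So the argument is linear algebra in the graded ring $\Lambda(\alpha)$ at $\alpha=2$, not an explicit Littlewood--Richardson computation.

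First, I will show that for each $N$ the family $\{C_\nu(x;2)\}_{\nu\vdash N}$ is a basis of the degree-$N$ homogeneous component $\Lambda^N\otimes\mathbb Q$.
\begin{itemize}
\item By axiom (C2), $J_\nu=\sum_{\kappa\le_D\nu}v_{\nu\kappa}(\alpha)m_\kappa$. Hence the transition matrix from $\{J_\nu\}$ to $\{m_\kappa\}$ is triangular with respect to any linear extension of dominance order, for instance lexicographic order.
\item The diagonal entry $v_{\nu\nu}(\alpha)$ is nonzero. Indeed $J_\nu=\bigl(\prod_{s}h_*^\nu(s)\bigr)P_\nu$, and $P_\nu$ is monic, $P_\nu=m_\nu+\cdots$. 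At $\alpha=2$ every lower hook $h_*^\nu(s)=\nu'_j-i+1+2(\nu_i-j)$ is positive.
\item The normalizing factor $\alpha^{N}N!/j_\nu(\alpha)$ relating $C_\nu$ to $J_\nu$ is finite and nonzero at $\alpha=2$, because $j_\nu(2)$ is a product of positive hook lengths.
\end{itemize}
Therefore $\{C_\nu\}_{\nu\vdash N}$ is related to the monomial basis $\{m_\kappa\}_{\kappa\vdash N}$ by an invertible triangular matrix, which proves the basis claim.

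Second, $C_\lambda C_\mu$ is symmetric and homogeneous of degree $|\lambda|+|\mu|$, so it lies in $\Lambda^{|\lambda|+|\mu|}$. By the first step it has a unique expansion $\sum_{\nu\vdash|\lambda|+|\mu|}g^\nu_{\lambda\mu}C_\nu$, with $g^\nu_{\lambda\mu}\in\mathbb Q$. Uniqueness also gives $g^\nu_{\lambda\mu}=g^\nu_{\mu\lambda}$.

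Third, I pass to finitely many variables, which is what the later applications use with $m$ variables. The restriction map $\Lambda\to\Lambda_m$ is a ring homomorphism, so the identity survives specialization. Moreover $C_\nu(x_1,\dots,x_m)=0$ whenever $\ell(\nu)>m$, by triangularity: every $m_\kappa$ with $\kappa\le_D\nu$ has $\ell(\kappa)\ge\ell(\nu)>m$. This explains the restriction $\ell(\phi)\le m$ in $\widetilde B_{\mu\nu}$.

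The only point that needs care is the nonvanishing of the diagonal coefficients and of the normalizing constant at $\alpha=2$, which is the positivity of the hook products noted above. Everything else is formal.
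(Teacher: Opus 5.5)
Your argument is correct. Triangularity of $\{C_\nu\}_{\nu\vdash N}$ against $\{m_\kappa\}_{\kappa\vdash N}$, with nonvanishing diagonal (positive hook products at $\alpha=2$), gives the basis property, hence an expansion with unique coefficients $g_{\lambda\mu}^{\nu}$; your dominance/length observation also correctly explains the truncation $\ell(\phi)\leq m$ in $B_{\mu\nu}$ and $\widetilde B_{\mu\nu}$. The paper states the lemma without proof and takes $g_{\lambda\mu}^{\nu}$ to be the structure constants of the zonal basis, deferring their computation to the cited tables and algorithms, so your approach is the same and simply makes explicit the basis fact the paper leaves implicit.
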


The coefficients \(g_{\lambda\mu}^\nu\) are the zonal structure
constants or Littlewood-Richardson coefficients, 
{which appear in several multivariate testing problems in multivariate statistics, such as MANOVA and sphericity tests. For values up to the seventh degree of $|\lambda|$ and $|\mu|$, these were tabulated in \cite{khatri1968non}.}
The special cases where $\mu=(m)$ or $\mu = (1^m)$, the latter analogous to the Pieri rule, were established by \cite{kushner1988product} and \cite{vretare1985product}, respectively. More recently, an efficient algorithm for computing these coefficients in the general case was developed by \cite{shimizu2022algorithm}.

\subsection{Hypergeometric functions} The importance of introducing zonal polynomials is that they form an orthogonal invariant basis for hypergeometric functions and other transcendental functions in representing multivariate integrals. The multivariate Gamma function, denoted by $\varGamma_{p}(a)$, is defined to be
\begin{equation}
    \varGamma_{n}(a) = \int_{A>0} \operatorname{etr} (-A) |A|^{a-\frac{n+1}{2}} (dA), \qquad {\rm etr} = \exp {\rm tr}
\end{equation}
where $\Re (a) > \frac{1}{2}(n-1)$ and $A>0$ means the integral is taken over the space of real symmetric positive definite $n \times n$ matrices. The multivariate Beta function, denoted by $B_{n}(a,b)$, is defined to be
\begin{equation}
    B_{n}(a,b) =\int_{0< X < I_n} |X|^{a - \frac{n+1}{2}} |I-X|^{b - \frac{n+1}{2}} (dX),
\end{equation}
where $\Re (a), \Re (b) > \frac{1}{2}(n-1)$, and the integral is taken over all $n\times n$ real symmetric matrices $X$ such that both $X$ and $I - X$ are positive definite. The multivariate Beta function is related to the multivariate Gamma function by the formula 
\begin{equation}
    B_{n}(a,b) = \frac{\varGamma_{n}(a)\varGamma_{n}(b)}{\varGamma_{n}(a + b)}.
\end{equation}

For a partition $\kappa = (\kappa_1, \kappa_2, \dots, \kappa_n)$ of the integer $k$, the $\alpha$-refined Pochhammer symbol or rising factorial symbol  associated with the partition $\kappa$ is defined as
\begin{equation}
    (a)_{\kappa,\alpha} = \prod_{i=1}^{n} \left( a - \frac{i-1}{\alpha} \right)_{\kappa_i},\label{eq: Pochhammer symbol}
\end{equation}
where $(x)_k = x(x+1)\dots(x+k-1)$ and $(x)_0 = 1$. When $\alpha = 2$, $(a)_{\kappa} = (a)_{\kappa,2}$ reduces to the James' notation \cite{james1960distribution}. Here, we restrict attention to Jack polynomials $J_{\lambda}(x;\alpha)$ with $\alpha = 2,1,1/2$, hence corresponding to real, complex, and quaternionic cases.

The hypergeometric function on $\underline{x} = (x_1,x_2,\dots,x_n)$, denoted by ${}_pF_q(\underline x)$, is conjugate invariant on these normed division algebras $\mathbb R, \mathbb C, \mathbb H$ so they can be easily extended to a single real symmetric (or complex Hermitian etc) matrix argument with $\underline x$ as its eigenvalues. The function is defined by the series \cite{Vilenkin2010RepresentationOL}
\begin{equation}
    {}_pF_q(\underline{a}; \underline{b}; \underline{x};\alpha) = \sum_{k=0}^{\infty} \sum_{\kappa \ \vdash k} \frac{(a_1)_{\kappa,\alpha} \dots (a_p)_{\kappa,\alpha}}{(b_1)_{\kappa,\alpha} \dots (b_q)_{\kappa,\alpha}} \frac{C_{\kappa}(\underline{x};\alpha)}{k!},
\end{equation}
where $\underline{a} = (a_1, \dots, a_p)$, $\underline{b} = (b_1, \dots, b_q)$. The parameters $a_i$ and $b_j$ are complex constants such that no $b_j - ({i-1})/{\alpha}$ is a non-positive integer for $i=1, \dots, n$. 

Special cases for $\alpha = 2$ of this general form ${}_pF_q(\underline{a}; \underline{b}; \underline{x}) = {}_pF_q(\underline{a}; \underline{b}; \underline{x};2)$ are particularly significant in multivariate analysis when we are dealing with real symmetric matrices. There are two examples.
\begin{itemize}
    \item The matrix exponential ${}_0F_0(X)$ for an $n\times n$ real symmetric matrix $X$ is given by $${}_0F_0(X) = \operatorname{etr}(X) = \sum_{k=0}^{\infty} \sum_{\kappa \ \vdash k} \frac{C_{\kappa}(X)}{k!}.$$
    \item The matrix binomial expansion ${}_1F_0(X)$ for $\|X\| < 1$ is $${}_1F_0(a; X) = |I-X|^{-a} = \sum_{k=0}^{\infty} \sum_{\kappa \ \vdash k} (a)_{\kappa} \frac{C_{\kappa}(X)}{k!}.$$
\end{itemize}
Moreover, the series ${}_pF_q$ converges for all $X$ if $p \leq q$, and for $\|X\| < 1$ if $p = q+1$. These functions provide the analytical framework for deriving the distributions of the eigenvectors of the Wishart distribution. 

\subsection{A Selberg-Jack integral}

\begin{lemma}[{\cite[Theorem 1]{KADELL199733}, \cite[Theorem 1.2]{warnaar2009}}]
Let \(n\geq 1\) and let \(p_{\alpha}=1+(n-1)/\alpha\) where \(\alpha\in \{2,1,1/2\}\). Then for \(\Re(a)>(n-1)/{\alpha}\) and \(\Re(b)>(n-1)/{\alpha}\), the Selberg-Jack integral is
\[
\begin{aligned}
\int_0^1\dots \int_0^1
    \prod_{i=1}^{n}
    x_i^{a-p_\alpha}
    (1-x_i)^{b-p_\alpha}
        \prod_{i=1}^n
        \prod_{j=i+1}^n
        |x_i-x_j|^{{2}/{\alpha}}
    C_{\kappa}(x_1,\dots,x_n;\alpha)
    dx_1\dots dx_n\\
=
    S_n(a-p_\alpha+1,b-p_\alpha+1,1/\alpha)
    \frac{(a)_{\kappa,\alpha}}{(a+b)_{\kappa,\alpha}}
    C_{\kappa}(1,\dots,1;\alpha),
\end{aligned}
\]
where
\((a)_{\kappa,\alpha}\) is the Pochhammer symbol \eqref{eq: Pochhammer symbol} and the constant $S_n(A,B,C)$ is
\[
\prod_{j=0}^{n-1}
\frac{\varGamma(A+jC)\varGamma(B+jC)\varGamma(1+(j+1)C)}{\varGamma(A+B+(n+j-1)C)\varGamma(1+C)}.
\]
In particular, the right hand side is divided by \(n!\) when the integration is taken over \(1>x_1>x_2>\cdots>x_n>0.\)
\label{lem: Selberg}
\end{lemma}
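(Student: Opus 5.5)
The plan is to reduce the statement to two ingredients: Selberg's classical integral, which supplies the constant $S_n$, and an eigen-relation that fixes the $\kappa$-dependence. The integrand is symmetric in $x_1,\dots,x_n$. Hence the final remark about the ordered simplex $1>x_1>\cdots>x_n>0$ is immediate: the cube is the union of $n!$ images of that chamber (up to a null set), so the ordered integral is the cube integral divided by $n!$.

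For $\kappa=\emptyset$ we have $C_\emptyset=1$, and the claim is exactly Selberg's integral with parameters $A=a-p_\alpha+1$, $B=b-p_\alpha+1$ and $\gamma=1/\alpha$. This uses $|x_i-x_j|^{2/\alpha}=|x_i-x_j|^{2\gamma}$. The hypotheses $\Re(a),\Re(b)>(n-1)/\alpha$ are precisely $\Re A,\Re B>0$, which gives convergence. I would take Selberg's formula as known; it is the classical case of the cited results.

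For general $\kappa$, define
\[
I_\kappa=\int_{[0,1]^n}w(x)\,C_\kappa(x;\alpha)\,dx ,
\]
where $w$ is the Selberg weight. I would follow Kadell's approach and prove
\[
I_\kappa = I_\emptyset\,\frac{(a)_{\kappa,\alpha}}{(a+b)_{\kappa,\alpha}}\,C_\kappa(1^n;\alpha).
\]
Two operators are used. The first is the Euler operator $E=\sum_i x_i\partial_i$. The second is the Sekiguchi/Laplace--Beltrami operator
\[
D=\sum_i x_i^2\partial_i^2+\frac{2}{\alpha}\sum_{i\neq j}\frac{x_i^2}{x_i-x_j}\partial_i .
\]
The Jack polynomials $C_\kappa(\cdot;\alpha)$ are eigenfunctions of $D$, with eigenvalue $\sum_i \kappa_i(\kappa_i-1-\tfrac{2}{\alpha}(i-1))+(n-1)|\kappa|\cdot\tfrac{2}{\alpha}$ up to the standard normalization.

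The key step is to integrate $\sum_i\partial_i\bigl(x_i(1-x_i)\,w\,\partial_i C_\kappa\bigr)$, or more simply $\sum_i\partial_i(x_i w C_\kappa)$, against the weight over the cube. The boundary terms vanish by the hypotheses on $a$ and $b$. Expanding the derivative of $w$ produces the cross terms $\sum_{i\ne j}x_i/(x_i-x_j)$, which symmetrize to constants, and $\sum_i x_i/(1-x_i)$.

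The resulting identity expresses $(a-p_\alpha+1)I_\kappa$ and similar quantities through integrals of $w$ against $x_i\partial_i C_\kappa$ and $x_i^2\partial_i^2 C_\kappa$. Using the eigen-equations for $E$ and $D$, together with the Pieri-type expansion of $e_1\cdot C_\kappa$ and the binomial formula $C_\kappa(1+x)/C_\kappa(1^n)=\sum_{\sigma\subseteq\kappa}\binom{\kappa}{\sigma}C_\sigma(x)/C_\sigma(1^n)$, this yields a triangular recursion. It relates $I_\kappa/C_\kappa(1^n)$ to $I_{\kappa^{(i)}}/C_{\kappa^{(i)}}(1^n)$ for $\kappa^{(i)}=\kappa$ with one box removed from row $i$. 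The factor picked up when the box in row $i$ is added is $(a-(i-1)/\alpha+\kappa_i)/(a+b-(i-1)/\alpha+\kappa_i)$. Induction on $|\kappa|$, starting from the Selberg case, then produces the product $(a)_{\kappa,\alpha}/(a+b)_{\kappa,\alpha}$.

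A shortcut exists for $\alpha=2$. Write the integral over eigenvalues as a matrix-Beta integral $\int_{0<X<I}|X|^{a-(n+1)/2}|I-X|^{b-(n+1)/2}C_\kappa(X)\,dX$. Represent $X=(W_1+W_2)^{-1/2}W_1(W_1+W_2)^{-1/2}$ with independent Wisharts of degrees of freedom $2a$ and $2b$. Then use $\mathbb E\,C_\kappa(W)=2^{|\kappa|}(m/2)_\kappa C_\kappa(\varSigma)$ together with the independence of $X$ and $W_1+W_2$ to read off the ratio $(a)_\kappa/(a+b)_\kappa$ (Constantine's argument).

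The main obstacle is the bookkeeping in the integration-by-parts step. One must show that the mixed Vandermonde terms from $\partial_i|x_i-x_j|^{2/\alpha}$ recombine, after symmetrization, exactly into the Sekiguchi operator. One must also show that the one-box-removed coefficients (the generalized binomial coefficients) close up into a recursion solvable by the Pochhammer product. Here I would lean on Kadell's Theorem 1 for the combinatorial identity on these coefficients, rather than rederiving it.
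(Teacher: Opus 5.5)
Your reductions are correct. The chamber statement follows from symmetry. For $\kappa=\varnothing$ the claim is Selberg's integral with $A=a-(n-1)/\alpha$, $B=b-(n-1)/\alpha$, $\gamma=1/\alpha$, and the hypotheses are exactly $\Re A,\Re B>0$. The paper gives no argument here beyond citing Kadell and Warnaar, so the question is whether your $\kappa$-step stands on its own.

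\textbf{The $\alpha=2$ shortcut is complete.} Put $T=W_1+W_2$; it is independent of $X$ and orthogonally invariant. James's integral then gives $\mathbb E\,C_\kappa(W_1)=\mathbb E\,C_\kappa(XT)=\mathbb E\,C_\kappa(X)\,\mathbb E\,C_\kappa(T)/C_\kappa(I_n)$. Hence $\mathbb E\,C_\kappa(X)=\{(a)_\kappa/(a+b)_\kappa\}C_\kappa(I_n)$, and this equals $I_\kappa/I_\varnothing$ because the Haar factor cancels in eigen-coordinates. This proves the lemma for real $a,b$, and analytic continuation extends it. That is the only case the paper uses, since Section~\ref{sec:joint-density} applies the lemma with $\alpha=2$. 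The same argument with complex and quaternionic Wishart matrices covers $\alpha=1,1/2$, and so a route you already have reaches the full stated range.

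\textbf{The general-$\alpha$ sketch has a genuine gap.} There are three problems.
\begin{enumerate}
\item The field $x_iwC_\kappa$ leaves the non-polynomial term $\sum_i x_iC_\kappa/(1-x_i)$. Derivative-free fields only give upward relations through $e_1C_\kappa$, with several unknowns of the next degree, so they cannot drive an induction.
\item Even with the right field, you get one relation mixing all the $\kappa^{(i)}$, so there is no per-box factor to pick up.
\item You defer the closing step to Kadell's Theorem~1, which is the lemma itself, so that step is circular.
\end{enumerate}

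\textbf{The missing computation is short.} Put $\tilde C_\kappa=C_\kappa/C_\kappa(1^n)$, $\tilde I_\kappa=I_\kappa/C_\kappa(1^n)$, $E_0=\sum_i\partial_i$, and $D_0=\tfrac12[E_0,D]=\sum_ix_i\partial_i^2+\tfrac2\alpha\sum_{i\ne j}\frac{x_i}{x_i-x_j}\partial_i$.

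Integrating $\sum_i\partial_i\bigl(x_i(1-x_i)\,w\,\partial_i\tilde C_\kappa\bigr)$ over the cube gives $\int w\,(D_0+AE_0-D-(A+B)E)\tilde C_\kappa\,dx=0$. The boundary terms vanish because $\Re A,\Re B>0$.

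The binomial formula gives $E_0\tilde C_\kappa=\sum_i\binom{\kappa}{\kappa^{(i)}}\tilde C_{\kappa^{(i)}}$. Hence $D_0\tilde C_\kappa=\sum_i\binom{\kappa}{\kappa^{(i)}}(\kappa_i-1+\tfrac{n-i}{\alpha})\tilde C_{\kappa^{(i)}}$, and so
\[
\bigl(d_\kappa+(A+B)|\kappa|\bigr)\tilde I_\kappa=\sum_i\binom{\kappa}{\kappa^{(i)}}\Bigl(a-\frac{i-1}{\alpha}+\kappa_i-1\Bigr)\tilde I_{\kappa^{(i)}},
\]
where $d_\kappa$ is your eigenvalue of $D$. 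For real admissible $a,b$ the left coefficient is positive, so $\tilde I_\kappa$ is determined inductively from $\tilde I_\varnothing$.

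Inserting the Pochhammer ansatz reduces the relation to two identities:
\[
\sum_i\binom{\kappa}{\kappa^{(i)}}=|\kappa|,
\qquad
\sum_i\binom{\kappa}{\kappa^{(i)}}\Bigl(\kappa_i-1-\frac{i-1}{\alpha}\Bigr)=\sum_i\kappa_i(\kappa_i-1)-\frac2\alpha\sum_i(i-1)\kappa_i .
\]
Both follow by evaluating $E_0\tilde C_\kappa$ and $D_0\tilde C_\kappa$ at $x=1^n$. For the second, use $D_0f(1^n)=Df(1^n)-\tfrac{n-1}{\alpha}|\kappa|$, valid for any symmetric $f$ homogeneous of degree $|\kappa|$ with $f(1^n)=1$. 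This closes the induction without any appeal to Kadell.
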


This integral was systematically developed by several authors. 
For a historical discussion of the Selberg integral and its
extensions, see \cite{ForresterWarnaar2008}. We apply this identity in
Section~\ref{sec:joint-density}.

\section{Fractional Differential Operators and Laplace Transforms}
\label{sec:fractional-operators}

The differential operators used below arise from a simple property of
the Laplace transform. For every non-negative integer \(r\),
\[
2^r(-\partial_{q_i})^r
e^{-q_i\lambda_i/2}
=
\lambda_i^r e^{-q_i\lambda_i/2}.
\]
Since the powers of \(\lambda_i\) occurring in our eigenvalue integrals
need not be integers, extensions of this relation to arbitrary real or complex powers are needed.

Let
\(W_k
=
\bigl\{
\lambda\in(0,\infty)^k:
\lambda_1>\cdots>\lambda_k
\bigr\}\), and suppose that
\[
F_g(q)
=
\int_{W_k}
e^{-\frac12\sum_{j=1}^kq_j\lambda_j} g(\lambda)d\lambda,
\qquad q\in(0,\infty)^k,
\]
is absolutely convergent.

\begin{lemma}[{\cite[Section 2.2 and Section 2.9]{KilbasSrivastavaTrujillo2006}}]\label{lem:fractional-properties}
There exists a unique family of commuting operators \(\mathscr D_i^{\,z}, i=1,\ldots,k, z\in\mathbb R,\) on the class of Laplace transforms under consideration such that
\[
\mathscr D_i^{\,z}F_g(q)
=
\int_{W_k}
\lambda_i^z
e^{-\frac12\sum_{j=1}^kq_j\lambda_j}
g(\lambda)\,d\lambda,
\]
whenever the integral is absolutely convergent. These operators satisfy for $r\in\mathbb N_0$,
\[\mathscr D_i^{r}=2^r(-\partial_{q_i})^r, \qquad \mathscr D_i^{z}\mathscr D_i^{w}
=
\mathscr D_i^{z+w},
\quad
\mathscr D_i^{z}\mathscr D_j^{w}
=
\mathscr D_j^{w}\mathscr D_i^{z}, \quad 1\le i,j\le k.
\]
\end{lemma}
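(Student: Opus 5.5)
The plan is to \emph{define} the operators through the transform and derive every listed property from the corresponding property of multiplication by powers of \(\lambda_i\). For a Laplace transform \(F_g\) in the class under consideration, set
\[
\mathscr D_i^{\,z}F_g:=F_{\lambda_i^z g},
\]
whenever the right-hand side is absolutely convergent. Uniqueness of the family is then automatic, because the displayed requirement in Lemma~\ref{lem:fractional-properties} prescribes \(\mathscr D_i^{\,z}F_g\) on every \(F_g\) in the domain. The real content is that this definition is \emph{well defined}: it must depend only on the function \(F_g\), not on the representative \(g\).

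To show this, I would extend \(g\) by zero from \(W_k\) to \((0,\infty)^k\). I would then invoke the uniqueness theorem for the multivariate Laplace transform. If two locally integrable functions have absolutely convergent transforms that agree on an open subset of \((0,\infty)^k\), they agree almost everywhere. This follows, for instance, by analytic continuation to a tube domain and Fourier injectivity. Hence \(F_g=F_{\tilde g}\) forces \(g=\tilde g\) a.e., so \(\lambda_i^zg=\lambda_i^z\tilde g\) a.e., and \(\mathscr D_i^{\,z}\) is well defined. I expect this injectivity step, together with careful bookkeeping of the domains on which absolute convergence holds, to be the main obstacle. Everything else is a pointwise identity for the integrand.

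The algebraic rules then follow directly:
\[
\mathscr D_i^{z}\mathscr D_i^{w}F_g=F_{\lambda_i^{z}\lambda_i^{w}g}=F_{\lambda_i^{z+w}g}=\mathscr D_i^{z+w}F_g,
\qquad
\mathscr D_i^{z}\mathscr D_j^{w}F_g=F_{\lambda_i^{z}\lambda_j^{w}g}=\mathscr D_j^{w}\mathscr D_i^{z}F_g .
\]
Each holds whenever the intermediate transforms converge absolutely; by Tonelli this is implied by absolute convergence of the final integral, since the factors \(\lambda_i^z\) are positive.

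For integer \(r\), I would differentiate under the integral sign. Fix a compact set \(K\subset(0,\infty)^k\) and choose \(\epsilon>0\) with \(q_j-\epsilon>0\) on \(K\). Then \(\lambda_i^r e^{-\frac12\sum_j q_j\lambda_j}\le C_{r,\epsilon}\,e^{-\frac12\sum_j(q_j-\epsilon)\lambda_j}\) on \(K\). This gives a dominating function, provided \(F_{|g|}\) converges at \(q-\epsilon\mathbf 1\), which is the natural domain assumption in this setting. Dominated convergence then gives \(2^r(-\partial_{q_i})^rF_g=F_{\lambda_i^rg}=\mathscr D_i^{\,r}F_g\).

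To connect with the fractional calculus of \cite{KilbasSrivastavaTrujillo2006}, and to obtain an intrinsic formula not referring to \(g\), I would treat negative and non-integer exponents separately. For \(z=-s<0\), use the identity \(\lambda^{-s}=\Gamma(s)^{-1}\int_0^\infty t^{s-1}e^{-t\lambda}\,dt\) and apply Fubini to get the Weyl fractional integral
\[
\mathscr D_i^{-s}F(q)=\frac{2^s}{\Gamma(s)}\int_0^\infty u^{s-1}F(q+u e_i)\,du .
\]
For non-integer \(z>0\), write \(z=r-s\) with \(r\in\mathbb N\) and \(0<s<1\), and set \(\mathscr D_i^{\,z}=\mathscr D_i^{\,r}\mathscr D_i^{-s}\). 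The semigroup law shows this agrees with the definition above, which also confirms consistency with the Riemann--Liouville/Weyl operators cited in the lemma.
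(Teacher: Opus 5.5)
Your main line is correct, but it is not the route the paper relies on. The paper gives no proof. It cites the Liouville calculus on the half-axis and its partial version in \cite{KilbasSrivastavaTrujillo2006}. In that route, up to the factor $2^{z}$, $\mathscr D_i^{z}$ is the right-sided Liouville (Weyl) fractional integral ($z<0$) or derivative ($z>0$) in the single variable $q_i$. The exponential $e^{-q_i\lambda_i/2}$ is an eigenfunction with eigenvalue $\lambda_i^{z}$, and Fubini carries this inside $F_g$. You instead define $\mathscr D_i^{z}F_g:=F_{\lambda_i^{z}g}$ on the transform side, and you get well-definedness from injectivity of the multivariate Laplace transform. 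Your route turns uniqueness, the composition law and commutativity into pointwise identities in $\lambda$, and it gives the largest natural domain. It also makes explicit that a product $\prod_i\mathscr D_i^{z_i}$ in \eqref{eq:frac} is a single multiplier, which is how the paper actually uses it. The price is that your operators are not intrinsic: they act only on functions already known to be transforms, and you need the injectivity theorem for that. The cited calculus instead supplies explicit integro-differential formulas acting directly on functions of $q$, whose semigroup laws come with their own integrability hypotheses.

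Two auxiliary claims need fixing. First, positivity of $\lambda_i^{z}$ plus Tonelli does not make absolute convergence of the final integral imply that of the intermediate ones. Take $k=1$, $g\equiv1$, $z=2$, $w=-1$: then $F_{\lambda g}(q)=4/q^{2}$ converges, but $F_{\lambda^{-1}g}$ diverges at $\lambda=0$. So $\mathscr D^{2}\mathscr D^{-1}F_g$ is undefined although $\mathscr D^{1}F_g$ exists. What your argument proves, and all the lemma can mean, is that $\mathscr D_i^{z}\mathscr D_i^{w}F_g=\mathscr D_i^{z+w}F_g$ whenever the left side is defined. The domains are equal when the first-applied exponent is $w\geq0$, because then $F_{\lambda_i^{w}g}$ converges on $(0,\infty)^k$ by the same domination as in your integer case. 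This matters in the paper: for instance, $\mathscr D_1^{-\sigma_1}\varPhi_k$ on its own diverges once $\sigma_1\geq k$, and only total exponents are checked there. The same defect affects your $\mathscr D_i^{r}\mathscr D_i^{-s}$; for example, $k=1$, $g=\lambda^{-1/2}$, $r=1$, $s=0.6$ fails. Use the order $\mathscr D_i^{-s}\mathscr D_i^{r}$ instead. Second, your Weyl constant is inverted. The substitution $t=u/2$ gives $\mathscr D_i^{-s}F(q)=\frac{2^{-s}}{\varGamma(s)}\int_0^\infty u^{s-1}F(q+ue_i)\,du$. This is consistent with $s=1$, where the inverse of $\mathscr D_i=-2\partial_{q_i}$ is $\frac12\int_0^\infty F(q+ue_i)\,du$.
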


In the following, the entries of every operator matrix commute. If \(\mathfrak D=(\mathfrak D_{ij})_{i,j=1}^n\) is such a matrix, its operator determinant is defined by
\begin{equation}
\det(\mathfrak D)F
:= \begin{vmatrix}
\mathfrak D_{11} & \cdots & \mathfrak D_{1n}\\
\vdots           & \ddots & \vdots\\
\mathfrak D_{n1} & \cdots & \mathfrak D_{nn}
\end{vmatrix}F =
\sum_{\pi\in\mathfrak S_n}
\operatorname{sgn}(\pi)
\left(
\prod_{i=1}^n
\mathfrak D_{i,\pi(i)}
\right)F.
\label{eq:operator-determinant}
\end{equation}
The commutativity of the entries ensures that the order of the
operators in each product is irrelevant.

\section{Tumura's Trick for Orthogonal Group Decomposition}\label{sec:orthogonal}

In this section, we use $\underline{\theta}$ to denote vector and $\underline{\underline{\theta}}$ matrix. $dX$ is a symbol for exterior differential and $(dX)$ its wedge product defined as 
\[(dX) = \begin{cases}
  \displaystyle   \bigwedge_{i=1}^{n}\bigwedge_{j=1}^{p} dx_{ij}, & \text{ for $n\times p$ rectangular matrix } X;\\  \displaystyle   \bigwedge_{1 \le i\le j \le p} dx_{ij}, & \text{ for $p\times p$ real symmetric matrix } X;\\
  \displaystyle   \bigwedge_{1 \le i< j \le p} dx_{ij}, & \text{ for $p\times p$ real skew-symmetric matrix } X.
\end{cases}\]

For fixed positive integers $p$ and $m\le p-1$, define
\[R_p^{m}(\theta) = \begin{pmatrix}
    I_{m-1} & 0 & 0 & 0\\
    0 & \cos \theta & -\sin \theta & 0\\
    0 & \sin \theta & \cos \theta & 0\\
    0 & 0 & 0 & I_{p - m - 1}\\
\end{pmatrix}\]
to be the basic rotation matrix that sends a vector in the $p$-dimensional space to another by an anti-clockwise angle $0 \le \theta < 2\pi$ along the plane spanned by $e_m$ and $e_{m+1}$ via the mapping $v\mapsto R_p^m (\theta) \cdot v$, where $e_i = (0,\dots,1,\dots,0)'$ has only one $1$ in the $i$-th entry. Let 
\[\begin{aligned}
    H_p^1 (\underline{\theta_1}) = R_p^{p-1}(\theta_{1p})R_p^{p-2}(\theta_{1,p-1})R_p^{p-3}(\theta_{1,p-2})\dots R_p^{1}(\theta_{12}),\\
    H_p^2 (\underline{\theta_2}) = R_p^{p-1}(\theta_{2p})R_p^{p-2}(\theta_{2,p-1})\dots R_p^{2}(\theta_{23}),\\
    \vdots \qquad \\
    H_p^{p-1} (\underline{\theta_{p-1}}) = R_p^{p-1}(\theta_{p-1,p}).
\end{aligned}\]
Let
\(\mathcal D_p=\{\operatorname{diag}(\varepsilon_1,\ldots,\varepsilon_p):
\varepsilon_i\in\{-1,1\}\}\), and let
\(\mathcal R_p=O(p)/\mathcal D_p\). Every class in \(\mathcal R_p\)
has a representative in \(SO(p)\). Except on a null set, a representative
may be written as
\[
H(\underline{\underline\theta})
=H_p^1(\underline\theta_1)H_p^2(\underline\theta_2)\cdots
H_p^{p-1}(\underline\theta_{p-1}).
\]
The full \(SO(p)\) parametrization uses \(0\leq\theta_{ij}<\pi\) for
\(1\leq i<j<p\) and \(0\leq\theta_{ip}<2\pi\) for \(1\leq i<p\).
Identifying the \(2^{p-1}\) column sign choices with determinant one gives
a quotient fundamental domain. We use \(0\leq\theta_{ij}<\pi\) for all
\(1\leq i<j\leq p\) on this domain. 

Put \(c_d=\pi^{d^2/2}/\varGamma_d(d/2)\) for \(d\geq1\), and set
\(c_0=1\). Tumura \cite{Tumura1965THEDO} established

\begin{lemma}[Tumura's decomposition]\label{lem: Tumura} 
\begin{enumerate}
    \item The Haar measure on \(O(p)\) is given by
    \[(dH) = 2^p \prod_{i=1}^{p-2} \prod_{j=i+1}^{p-1} \sin^{p-j}\theta_{ij} (d\underline{\underline{\theta}}), \quad \text{where } 
        (d\underline{\underline{\theta}})
        =
        \bigwedge_{i=1}^{p-1}
        \bigwedge_{j=i+1}^{p}
        d\theta_{ij},\]
        with total mass 
        $ 2^p c_p.$
    \item The exterior volume element on the space of real symmetric matrices is given by
    \[(dS) = 2^{-p}\prod_{1\le i<j\le p} (\lambda_i-\lambda_j) (d\varLambda)(dH), \quad \text{where }(d\varLambda)=\bigwedge_{i=1}^{p}d\lambda_i.\]
\end{enumerate}
\end{lemma}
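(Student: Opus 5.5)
The argument follows the classical route of computing the Jacobian of the map $\underline{\underline\theta}\mapsto H(\underline{\underline\theta})$ through the invariant one-forms $H'dH$, and then obtaining the symmetric-matrix volume element from the spectral decomposition.

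\emph{Part 1.} Write $H=H_p^1(\underline\theta_1)\cdots H_p^{p-1}(\underline\theta_{p-1})$. The Haar form on $O(p)$ (unnormalized) is $(dH)=\bigwedge_{i<j} h_j'dh_i$, the wedge of the strictly lower entries of the skew matrix $H'dH$. Proceed by induction on $p$, peeling off the first factor. Put $H=H_p^1 K$ with $K=\operatorname{diag}(1,\widetilde H)$ and $\widetilde H\in SO(p-1)$ built from $\underline\theta_2,\dots,\underline\theta_{p-1}$. Then
\[
H'dH=K'\bigl((H_p^1)'dH_p^1\bigr)K+K'dK .
\]
The first column of $H$ is $h_1=H_p^1e_1$, a point of $\mathbb S^{p-1}$ in spherical coordinates $\theta_{12},\dots,\theta_{1p}$. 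The forms $h_j'dh_1$, $j\ge2$, involve only the $d\theta_{1j}$. Their wedge equals the spherical surface element $\prod_{j=2}^{p-1}\sin^{p-j}\theta_{1j}\bigwedge_j d\theta_{1j}$, because rotating by $\widetilde H$ in the complementary block has determinant one. The remaining lower entries of $H'dH$ in the $(2{:}p)$ block equal $\widetilde H'd\widetilde H$ plus terms containing $d\theta_{1j}$. These extra terms are killed in the full wedge product, so the form is triangular. Induction then gives
\[
\prod_{i=1}^{p-2}\prod_{j=i+1}^{p-1}\sin^{p-j}\theta_{ij}\,(d\underline{\underline\theta}).
\]
The factor $2^p$ accounts for the $|\mathcal D_p|=2^p$ sign classes, since the angles range over the quotient fundamental domain $[0,\pi)$. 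The total mass then follows from $\operatorname{vol}(O(p))=2^p\pi^{p^2/2}/\varGamma_p(p/2)$, which one gets from Muirhead's Theorem 2.1.15 or by computing $\int e^{-\operatorname{tr}X'X/2}(dX)$ in two ways.

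\emph{Part 2.} Write $S=H\varLambda H'$, so that $H'dSH=d\varLambda+(H'dH)\varLambda-\varLambda(H'dH)$. Its $(i,j)$ entry for $i<j$ is $(\lambda_j-\lambda_i)\,h_i'dh_j$, and its diagonal entries are $d\lambda_i$. Taking wedges gives
\[
(dS)=\prod_{i<j}(\lambda_i-\lambda_j)\,(d\varLambda)(dH)
\]
up to sign, counted over all of $O(p)$. The spectral map with ordered eigenvalues is $2^p$-to-one, because each column sign can be flipped. Hence, relative to the full Haar form, the density carries a factor $2^{-p}$.

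\emph{Main obstacle.} The delicate step is the bookkeeping of the $2^p$ factors: the restriction to $[0,\pi)$ for the angles $\theta_{ip}$, the passage to $SO(p)$ versus $O(p)$, and checking that the triangular structure really holds in the induction for Part 1. I would verify the constants on $p=2$ directly before trusting the general induction.
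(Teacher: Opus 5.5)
Your proposal is correct, and it is the classical argument that the paper takes from Tumura by citation rather than proving: induction on the invariant forms $H'dH$ through the plane-rotation factorization for part 1, and for part 2 the $H'dS\,H$ computation combined with the $2^p$-fold column-sign covering of the ordered spectral map. You read the $2^p$ in part 1 as the fibre multiplicity over the quotient domain, with the form itself equal to $\prod_{i=1}^{p-2}\prod_{j=i+1}^{p-1}\sin^{p-j}\theta_{ij}\,(d\underline{\underline\theta})$; this is how the paper uses the lemma, and the constant checks out directly, since integrating that form over $[0,\pi)^{p(p-1)/2}$ gives $\prod_{i=1}^{p-1}\tfrac12\operatorname{vol}(\mathbb S^{p-i})=c_p$.
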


Since ${\rm vol} (O(p)) = 2^p c_p$, this lemma is useful.

\begin{lemma}
\label{lem:sugiyama}
Let \(N\geq1,r\in\mathbb N_0\), and let \(A,B\) be real symmetric \(N\times N\)
matrices. Then
\begin{equation}
\frac{1}{2^Nc_N}
\int_{O(N)}
\bigl[\operatorname{tr}(AHBH')\bigr]^r(dH)
=
\sum_{\substack{\mu\vdash r\\ \ell(\mu)\leq N}}
\frac{C_\mu(A)C_\mu(B)}{C_\mu(I_N)}.
\label{eq:orbital-moment-full}
\end{equation}
If \(B\) is diagonal, then the integrand descends to
\(\mathcal R_N=O(N)/\mathcal D_N\), and
\begin{equation}
\int_{\mathcal R_N}
\bigl[\operatorname{tr}(ARBR')\bigr]^r(dR)
=
c_N
\sum_{\substack{\mu\vdash r\\ \ell(\mu)\leq N}}
\frac{C_\mu(A)C_\mu(B)}{C_\mu(I_N)}.
\label{eq:orbital-moment-quotient}
\end{equation}
\end{lemma}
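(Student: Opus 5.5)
The first identity \eqref{eq:orbital-moment-full} comes from the zonal expansion of the exponential combined with the standard splitting formula for zonal polynomials; the second is then a normalization argument. I take as known the classical splitting property of James, the functional equation of spherical functions on $GL_N(\mathbb R)/O(N)$:
\[
\int_{O(N)} C_\mu(AHBH')\,[dH] \;=\; \frac{C_\mu(A)\,C_\mu(B)}{C_\mu(I_N)},
\]
where $[dH]$ is the normalized Haar measure. By Lemma~\ref{lem: Tumura}, $(dH)$ has total mass $2^Nc_N$, so $[dH]=(dH)/(2^Nc_N)$.

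The steps are as follows.

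\emph{Step 1.} Introduce a scalar $t$ and use ${}_0F_0$:
\[
\operatorname{etr}(tAHBH')=\sum_{r\ge0}\frac{1}{r!}\sum_{\mu\vdash r}C_\mu(tAHBH').
\]
Since $C_\mu(tX)=t^{r}C_\mu(X)$ for $\mu\vdash r$, comparing coefficients of $t^r$ with the scalar expansion $\exp(t\operatorname{tr}X)=\sum_r t^r(\operatorname{tr}X)^r/r!$ gives
\[
(\operatorname{tr}X)^r=\sum_{\mu\vdash r}C_\mu(X).
\]
This is the defining normalization of the $C$-polynomials. The sum restricts to $\ell(\mu)\le N$, because $X=AHBH'$ is $N\times N$. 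Note that $AHBH'$ need not be symmetric. However, its eigenvalues coincide with those of $A^{1/2}HBH'A^{1/2}$ when $A>0$, and for general symmetric $A$ one uses polynomial identity in the entries, or shifts $A\to A+cI$.

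\emph{Step 2.} Integrate termwise against $[dH]$. There are finitely many terms for fixed $r$, so no convergence issue arises. Applying the splitting formula yields \eqref{eq:orbital-moment-full}.

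\emph{Step 3.} For the quotient statement, when $B$ is diagonal, $DBD'=B$ for every $D\in\mathcal D_N$. Hence $\operatorname{tr}(AHDBD'H')=\operatorname{tr}(AHBH')$, and the integrand is a function on $\mathcal R_N=O(N)/\mathcal D_N$. The group $\mathcal D_N$ has $2^N$ elements, and the fundamental domain of Section~\ref{sec:orthogonal}, with the measure from Lemma~\ref{lem: Tumura}, covers $O(N)$ exactly $2^N$ times up to null sets. Therefore
\[
\int_{O(N)}f\,(dH)=2^N\int_{\mathcal R_N}f\,(dR).
\]
Substituting this into \eqref{eq:orbital-moment-full} gives \eqref{eq:orbital-moment-quotient} with constant $c_N$.

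The main obstacle is bookkeeping rather than substance. One must check that the $2^N$ multiplicity factor matches the conventions of Lemma~\ref{lem: Tumura}, namely the factor $2^p$ in $(dH)$ and the angular ranges on the quotient domain, and also that the sign identifications are consistent with $\det=1$ representatives. I would verify this by computing the total mass of the quotient domain directly from Tumura's density and checking that it equals $c_N$, which is the case $r=0$ of \eqref{eq:orbital-moment-quotient}.
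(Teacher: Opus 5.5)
Your proposal is correct and follows the paper's argument. The paper cites James's moment identity directly, for positive definite arguments, and extends it to all symmetric matrices by polynomiality; you unpack it into the splitting formula $\int_{O(N)}C_\mu(AHBH')[dH]=C_\mu(A)C_\mu(B)/C_\mu(I_N)$ together with $(\operatorname{tr}X)^r=\sum_{\mu\vdash r}C_\mu(X)$. Your quotient step via the $2^N$-to-one map $O(N)\to\mathcal R_N$ is the same as the paper's. One small point: the polynomial-extension remark is needed in Step 2, for the splitting formula, which is classically stated for positive definite arguments. It is not needed in Step 1, where the power-sum identity holds for the eigenvalues of any square matrix.
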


\begin{proof}
Equation~\eqref{eq:orbital-moment-full} is James's identity \cite[(28)-(29) in Theorem 5]{james1961distribution}, initially for positive definite \(A,B\), and
extends to arbitrary real symmetric matrices because both sides are
polynomials in their entries. If \(B\) is diagonal, then
\(DBD=B\) for every \(D\in \mathcal D_N\), so the integrand is right
\(\mathcal D_N\)-invariant. Since the quotient map
\(O(N)\to\mathcal R_N\) has \(2^N\) fibres,
\[
\int_{O(N)}f(H)(dH)
=
2^N\int_{\mathcal R_N}f(R)(dR).
\]
Combining this identity with
\eqref{eq:orbital-moment-full} gives
\eqref{eq:orbital-moment-quotient}.
\end{proof}

Sugiyama's \(k=1\) decomposition is the first step of the more
general block factorization used here. Let \(m=p-k\) and write
\begin{equation}
    H(\underline{\underline{\theta}})
    =
    H_k
    \begin{pmatrix}
    I_{k} & 0 \\
    0 & H_{m}
    \end{pmatrix},
    \quad
    H_k
    =
    H_p^1(\underline\theta_1)\cdots
    H_p^k(\underline\theta_k), \quad H_p^p = I_p.
    \label{eq:block-H}
\end{equation}
The first \(k\) columns of \(H_k\) are \(h_1,\ldots,h_k\). We also
set \(q_{0i}=h_i'\varSigma^{-1}h_i\) and define the nuisance block by
$\varSigma_m
=
(H_k'\varSigma^{-1}H_k)_{k+1:p,k+1:p}.$
The next section integrates out \(H_m\) and all \(p-k\) nuisance
eigenvalues.

\section{Joint Density of the Top
\texorpdfstring{\(k\)}{k} Principal Eigenvectors}
\label{sec:joint-density}

Fix \(1\leq k\leq p\). Let \(S\sim W_p(n,\varSigma)\), where
\(n>p-1\) and \(\varSigma\) is positive definite. Put
\(m=p-k\), \(v=(n-p-1)/2\), \(a=(n-k)/2\),
\(b=(p-k+3)/2\), and \(u=(p-k)(n-k)/2\). By
\cite{wishart1928generalised} and
\cite[Theorem~3.2.1]{muirhead1982aspects}, its probability element is
\begin{equation}
\frac{1}
{2^{np/2}\varGamma_p(n/2)|\varSigma|^{n/2}}
|S|^v
\operatorname{etr}
\left(
-\frac{1}{2}\varSigma^{-1}S
\right)
(dS),
\label{eq:wishart-density}
\end{equation}
When \(m=0\), use the conventions
\(B_0=1\), \(C_\varnothing(I_0)=1\), \(\varGamma_0=1\), and
\(\operatorname{tr}\varSigma_0=0\).
Write \(S=H\varLambda H'\), where
\(\varLambda=\operatorname{diag}(\lambda_1,\ldots,\lambda_p)\),
\(\lambda_1>\cdots>\lambda_p>0\), and \(H\) has the form
\eqref{eq:block-H}. Partition \(\varLambda\) as
\[
\varLambda=
\begin{pmatrix}
\varLambda_k & 0\\
0 & \varLambda_m
\end{pmatrix},
\]
Here \(\varLambda_k\) contains the leading \(k\) eigenvalues and
\(\varLambda_m\) contains the remaining \(m\) nuisance eigenvalues. To avoid
ambiguity between Lebesgue measure and the spectral Jacobian, write
\((d\lambda)=\bigwedge_{i=1}^p d\lambda_i\) and
\((d\omega_p)=\varDelta(\varLambda)(d\lambda)\).

By Tumura's decomposition on the column sign quotient, the
transformation from \(S\) to its ordered eigenvalues and eigenvectors converts
\eqref{eq:wishart-density} into
\begin{equation}
\begin{aligned}
&\frac{1}
{2^{np/2}\varGamma_p(n/2)|\varSigma|^{n/2}}
|\varLambda_k|^v|\varLambda_m|^v
\exp\left(
-\frac12\sum_{i=1}^k
\lambda_i h_i'\varSigma^{-1}h_i
\right)
\\
&\qquad\times
\operatorname{etr}
\left(
-\frac12\varSigma_mH_m\varLambda_mH_m'
\right)
(dH_m)(dH_k)(d\omega_p),
\end{aligned}
\label{eq:joint-eigendecomposition}
\end{equation}
where
\(\varSigma_m=(H_k'\varSigma^{-1}H_k)_{k+1:p,k+1:p}\).
The angular volume elements are
\begin{equation*}
\begin{aligned}
(dH_m)
&=
\prod_{i=k+1}^{p-2}
\prod_{j=i+1}^{p-1}
\sin^{p-j}\theta_{ij}
\bigwedge_{i=k+1}^{p-1}
\bigwedge_{j=i+1}^{p}
d\theta_{ij},
\\
(dH_k)
&=
\prod_{i=1}^{k}
\prod_{j=i+1}^{p-1}
\sin^{p-j}\theta_{ij}
\bigwedge_{i=1}^{k}
\bigwedge_{j=i+1}^{p}
d\theta_{ij}.
\end{aligned}
\end{equation*}

The derivation now proceeds in three stages. We first integrate out
the nuisance eigenvectors \(H_m\), then integrate the nuisance
eigenvalues \(\lambda_{k+1},\ldots,\lambda_p\), and finally evaluate
the remaining integral over the ordered top eigenvalues
\(\lambda_1,\ldots,\lambda_k\).

\subsection{Integration over the nuisance eigenvectors}

We first integrate over the angles \(\theta_{ij}\) with
\(i=k+1,\ldots,p-1\) and \(j=i+1,\ldots,p\), which are the variables
contained in \((dH_m)\). Applying
Lemma~\ref{lem:sugiyama} to the orthogonal integral in
\eqref{eq:joint-eigendecomposition}, we obtain the following marginal
probability element
\begin{equation}
\begin{aligned}
& \frac{\pi^{m^2/2}}
{2^{np/2}|\varSigma|^{n/2}
	\varGamma_p(n/2)\varGamma_m(m/2)}
\bigl(\lambda_1\cdots\lambda_k|\varLambda_m|\bigr)^v\\
& \qquad \times 
\exp\left(
-\frac12\sum_{i=1}^{k}
\lambda_i h_i'\varSigma^{-1}h_i
\right)\sum_{r=0}^{\infty}
\sum_{\substack{\mu\vdash r\\ \ell(\mu)\leq m}}
\frac{
	C_\mu(-\tfrac12\varSigma_m)
	C_\mu(\varLambda_m)
}{
	r!C_\mu(I_m)
}
(dH_k)(d\omega_p).
\end{aligned}
\label{eq:nuisance-angular}
\end{equation}

At this point the angular variables associated with the nuisance
eigenvectors have disappeared. The remaining difficulty lies in the
interaction between the leading and nuisance eigenvalues through the
Vandermonde factor.

\subsection{The mixed Vandermonde factor}

We next integrate over \(\lambda_{k+1},\ldots,\lambda_p\).
The principal obstacle is the Vandermonde product
\[
\varDelta
=
\prod_{1\leq i<j\leq p}
(\lambda_i-\lambda_j),
\]
which couples the nuisance eigenvalues simultaneously to the distinct
leading eigenvalues
\(\lambda_1,\ldots,\lambda_k\). Consequently, the nuisance eigenvalue
integral does not immediately reduce to a standard matrix beta
integral.

To isolate the mixed interaction, decompose the Vandermonde product as
\[
\varDelta
=
\underbrace{
\prod_{1\leq i<j\leq k}
(\lambda_i-\lambda_j)
}_{\varDelta_{11}}
\underbrace{
\prod_{k\leq i<j\leq p}
(\lambda_i-\lambda_j)
}_{\varDelta_{22}}
\underbrace{
\prod_{\substack{1\leq i< k<j\leq p}}
(\lambda_i-\lambda_j)
}_{\varDelta_{12}}.
\]
Here \(\varDelta_{11}\) depends only on the leading eigenvalues,
\(\varDelta_{22}\) contains \(\lambda_k\) together with the nuisance
eigenvalues, and \(\varDelta_{12}\) contains the remaining mixed
interaction.

Macdonald's dual summation identity in Lemma~\ref{lem: dual cauchy}
provides a finite expansion of \(\varDelta_{12}\)
\[
\begin{aligned}
\varDelta_{12}
&=
\prod_{i=1}^{k-1}
\prod_{j=k+1}^{p}
(\lambda_i-\lambda_j)
\\
&=
(\lambda_1\cdots\lambda_{k-1})^m
\prod_{i=1}^{k-1}
\prod_{j=k+1}^{p}
\left(
1-\lambda_i^{-1}\lambda_j
\right)
\\
&=
(\lambda_1\cdots\lambda_{k-1})^m
\sum_{s=0}^{m(k-1)}
\frac{1}{s!}
\sum_{\substack{
	\nu\vdash s, 
	\nu_1\leq k-1\\
	\ell(\nu)\leq m
	}}
	J_{\nu'}
	\left(
	-\varLambda_{k-1}^{-1};1/2
	\right)
	C_\nu(\varLambda_m).
\end{aligned}
\]
The upper bound \(m(k-1)\) is the degree of the mixed product. The
restrictions
\(\ell(\nu)\leq m\) and \(\nu_1\leq k-1\) reflect the numbers of
variables in \(\varLambda_m\) and
\(\varLambda_{k-1}^{-1}\), respectively.

Combining this expansion with the zonal polynomial factor already
present in \eqref{eq:nuisance-angular}, Lemma~\ref{lem: LR rule} gives
\[
C_\mu(\varLambda_m)C_\nu(\varLambda_m)
=
\sum_{\phi\vdash r+s}
g_{\mu\nu}^{\phi}C_\phi(\varLambda_m).
\]

It therefore remains to evaluate the nuisance eigenvalue integral
\[
\begin{aligned}
I_0
=
\int_{\lambda_k>\lambda_{k+1}>\cdots>\lambda_p>0}
|\varLambda_m|^v
\prod_{k\leq i<j\leq p}
(\lambda_i-\lambda_j)
C_\phi(\varLambda_m)
\bigwedge_{i=k+1}^{p}d\lambda_i.
\end{aligned}
\]
Introduce \(l_a=\lambda_{k+a}/\lambda_k\), \(a=1,\ldots,m\), and
\(L_m=\operatorname{diag}(l_1,\ldots,l_m)\).
Then
\[
\begin{aligned}
I_0
&=
\lambda_k^{u+m+r+s}
\int_{1>l_1>\cdots>l_m>0}
|L_m|^v
\prod_{a=1}^{m}(1-l_a)
\\
&\qquad\qquad\times
\prod_{1\leq a<b\leq m}(l_a-l_b)
C_\phi(L_m)
\bigwedge_{a=1}^{m}dl_a,
\end{aligned}
\]
Applying Lemma~\ref{lem: Selberg} with \(\alpha=2\) and the parameters
defined above yields
\[
I_0
=
\lambda_k^{u+m+r+s}
\frac{\varGamma_m(m/2)}{\pi^{m^2/2}}
\frac{(a)_\phi}{(a+b)_\phi}
B_m(a,b)C_\phi(I_m).
\]

For later use, define
\begin{equation}\label{eq:constants-Bmunu}
B_{\mu\nu}
=
\sum_{\substack{
	\phi\vdash|\mu|+|\nu|\\
	\ell(\phi)\leq m
	}}
	g_{\mu\nu}^{\phi}
	\frac{(a)_\phi}{(a+b)_\phi}
	C_\phi(I_m).
\end{equation}

Finally, define the remaining spectral measure by
\((d\omega_k)=\varDelta_{11}\bigwedge_{i=1}^k d\lambda_i\).
After combining the powers of the leading eigenvalues and cancelling
the angular and Selberg constants, equation \eqref{eq:nuisance-angular}
reduces to
\begin{equation}
\begin{aligned}
&\frac{B_m(a,b)}
{2^{np/2}\varGamma_p(n/2)|\varSigma|^{n/2}}
\exp\left(
-\frac12\sum_{i=1}^{k}
\lambda_i h_i'\varSigma^{-1}h_i
\right)
|\varLambda_k|^{v+m}
(d\omega_k)(dH_k)
\\
&\qquad\times
\sum_{r=0}^{\infty}
\sum_{\substack{\mu\vdash r\\ \ell(\mu)\leq m}}
\frac{
	C_\mu(-\tfrac12\varSigma_m)
}{
	r!C_\mu(I_m)
}
\sum_{s=0}^{m(k-1)}
\frac{1}{s!}
\sum_{\substack{
		\nu\vdash s,\nu_1\leq k-1\\
		\ell(\nu)\leq m}}
\lambda_k^{u+r+s}
B_{\mu\nu}
J_{\nu'}
\left(
-\varLambda_{k-1}^{-1};1/2
\right).
\end{aligned}
\label{eq:pre-final}
\end{equation}
Thus all nuisance variables have now been integrated out. The remaining
task is to integrate the top ordered eigenvalues over the Weyl
chamber.

\subsection{The final ordered integral}

Set \(q_{0i}=h_i'\varSigma^{-1}h_i\) for \(1\leq i\leq k\), and
define the Weyl
chamber $W_k=\{\lambda_1>\cdots>\lambda_k>0\}.$ 

The powers in \eqref{eq:pre-final} are not necessarily integers.
We therefore use the commuting Laplace multipliers from
Section~\ref{sec:fractional-operators}, for which
\(\mathscr D_i^ze^{-q_i\lambda_i/2}
=\lambda_i^ze^{-q_i\lambda_i/2}\) and
\(\mathscr D_i^{z_1}\mathscr D_i^{z_2}
=\mathscr D_i^{z_1+z_2}\).

To evaluate the basic ordered exponential integral, introduce the
spacing variables \(t_i=\lambda_i-\lambda_{i+1}\) for
\(1\leq i<k\) and \(t_k=\lambda_k\).
Then
\[
\lambda_i=t_i+\cdots+t_k,
\quad
\sum_{i=1}^{k}q_i\lambda_i
=
\sum_{j=1}^{k}(q_1+\cdots+q_j)t_j.
\]
Consequently,
\[
\varPhi_k(\underline q)
=
\int_{W_k}
e^{-\frac12\sum_{i=1}^{k}q_i\lambda_i}
(d\underline\lambda)
=
\frac{2^k}
{q_1(q_1+q_2)\cdots(q_1+\cdots+q_k)}.
\]

More generally, multiplication by arbitrary powers of the eigenvalues
can be transferred to the Laplace variables
\begin{equation}
\int_{W_k}
\prod_{i=1}^{k}\lambda_i^{z_i}
e^{-\frac12\sum_{i=1}^{k}q_i\lambda_i}
(d\underline\lambda)
=
\left(
\prod_{i=1}^{k}\mathscr D_i^{z_i}
\right)
\varPhi_k(\underline q).
\label{eq:frac}
\end{equation}

Fix a partition \(\nu\vdash s\), and put
\(d=k-1\),
\(\varLambda_d=\operatorname{diag}(\lambda_1,\ldots,\lambda_d)\), and
\(\varDelta_d=\prod_{1\leq i<j\leq d}(\lambda_i-\lambda_j)\).
Partitions are padded with trailing zeros whenever an indexed part is
required. The determinant of order \(d=0\) and all products indexed by
\(1,\ldots,d\) are one.
Expand the Jack polynomial in the Schur basis
\begin{equation}
J_{\nu'}
\left(
-\varLambda_d^{-1};1/2
\right)
=
\sum_{\substack{
	\sigma\vdash s, \sigma\leq \nu'\\
	\ell(\sigma)\leq k-1
	}}
	K_{\nu'\sigma}(1/2)
	s_\sigma
	\left(
	-\varLambda_d^{-1}
	\right).\label{eq:Kostka-1-2}
\end{equation}

By the Weyl character formula \eqref{lem:Weyl},
\[
\varDelta_d
s_\sigma(-\varLambda_d^{-1})
=
(-1)^s
\det\left(
\lambda_i^{d-j-\sigma_{d+1-j}}
\right)_{i,j=1}^{d}.
\]
Since
\(
\varDelta_{11}
=
\varDelta_d
\prod_{i=1}^{d}
(\lambda_i-\lambda_k)\), the product
\(\varDelta_{11}
|\varLambda_k|^{v+m}
\lambda_k^{u+r+s}
s_\sigma(-\varLambda_d^{-1})
\)
can be represented by a single determinant whose entries are
products of powers of the ordered eigenvalues. The columns are written in
reverse partition order. This absorbs the fixed orientation factor
\((-1)^{\binom d2}=(-1)^{\binom{k-1}{2}}\) from the Weyl determinant.

Since \(|\sigma|=s\), multiplying the column indexed by
\(\sigma_{d+1-j}\) by \((-1)^{\sigma_{d+1-j}}\) also incorporates
the factor \((-1)^s\) in the displayed identity. In the explicit
determinants below, the first two and final rows and columns are shown.
For \(d=0\) or \(d=1\), the displayed pattern is truncated to the
corresponding order.

Applying \eqref{eq:frac} term by term gives the compact operator
definition
\begin{equation}
\begin{aligned}
\mathscr M_{\sigma,r,s}^{(k)}(\underline q)
&:=
\mathscr D_k^{v+u+m+r+s}
\prod_{i=1}^{d}
\left\{(\mathscr D_i-\mathscr D_k)\mathscr D_i^{v+m}\right\}
\\
&\qquad\times
(-1)^s\det\left(
\mathscr D_i^{d-j-\sigma_{d+1-j}}
\right)_{i,j=1}^{d}
\varPhi_k(\underline q).
\end{aligned}
\label{eq:calM}
\end{equation}

Substituting \eqref{eq:calM} into \eqref{eq:pre-final} gives a first
series representation of the joint density. Its direct termwise
derivation requires a convergence condition that is not automatic.
\begin{theorem}[Local series representation]
\label{thm:joint-density-top-k}
Let \(1\leq k\leq p\), \(n>p-1\), and let \(\varSigma\) be positive
definite. Put \(d=k-1\), \(m=p-k\), \(v=(n-p-1)/2\), \(a=(n-k)/2\),
\(b=(p-k+3)/2\), and \(u=(p-k)(n-k)/2\). If
\begin{equation}
\sum_{i=1}^k q_{0i}>\operatorname{tr}\varSigma_m,
\label{eq:local-convergence-condition}
\end{equation}
then the following series converges absolutely and is the joint
probability element of \(h_1,\ldots,h_k\)
\begin{align}
&\frac{B_m(a,b)}
{2^{np/2}\varGamma_p(n/2)|\varSigma|^{n/2}}
\sum_{r=0}^{\infty}
\sum_{\substack{\mu\vdash r\\\ell(\mu)\leq m}}
\frac{C_\mu(-\tfrac12\varSigma_m)}{r!C_\mu(I_m)}
\sum_{s=0}^{m(k-1)}\frac{1}{s!}
\sum_{\substack{\nu\vdash s,\ \nu_1\leq k-1\\
\ell(\nu)\leq m}}
B_{\mu\nu}
\sum_{\substack{\sigma\vdash s,\ \sigma\leq\nu'\\
\ell(\sigma)\leq k-1}}
\notag\\
&\qquad\times
K_{\nu'\sigma}(1/2)\prod_{i=1}^k\prod_{j=i+1}^{p-1}
\sin^{p-j}\theta_{ij}
\bigwedge_{i=1}^k\bigwedge_{j=i+1}^{p}d\theta_{ij}\times
\mathscr D_k^{v+u+m+r+s}
\prod_{i=1}^{d}(\mathscr D_i-\mathscr D_k)\notag\\
&\qquad\times\bigl(\prod_{i=1}^{d}\mathscr D_i^{v+m}\bigr)
\begin{vmatrix}
(-1)^{\sigma_d}\mathscr D_1^{d-1-\sigma_d}
& (-1)^{\sigma_{d-1}}\mathscr D_1^{d-2-\sigma_{d-1}}
& \cdots
& (-1)^{\sigma_1}\mathscr D_1^{-\sigma_1}\\
(-1)^{\sigma_d}\mathscr D_2^{d-1-\sigma_d}
& (-1)^{\sigma_{d-1}}\mathscr D_2^{d-2-\sigma_{d-1}}
& \cdots
& (-1)^{\sigma_1}\mathscr D_2^{-\sigma_1}\\
\vdots & \vdots & \ddots & \vdots\\
(-1)^{\sigma_d}\mathscr D_d^{d-1-\sigma_d}
& (-1)^{\sigma_{d-1}}\mathscr D_d^{d-2-\sigma_{d-1}}
& \cdots
& (-1)^{\sigma_1}\mathscr D_d^{-\sigma_1}
\end{vmatrix}
\varPhi_k(\underline q)
\Big|_{\underline q=\underline q_0}.
\label{eq:final-local}
\end{align}
Here \(\nu'\) denotes the conjugate partition, while
\(K_{\nu'\sigma}(1/2)\) and \(B_{\mu\nu}\) are defined in
\eqref{eq:Kostka-1-2} and \eqref{eq:constants-Bmunu}, respectively.
\end{theorem}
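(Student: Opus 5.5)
The proof assembles the derivation of Section~\ref{sec:joint-density} and then justifies the interchange of summation and integration under \eqref{eq:local-convergence-condition}.

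\textbf{Step 1 (algebraic identity on the chamber).} Start from the probability element \eqref{eq:pre-final}, which holds pointwise in $(\lambda_1,\dots,\lambda_k)$ and $(h_1,\dots,h_k)$. Its derivation already used three ingredients:
\begin{itemize}
\item Lemma~\ref{lem:sugiyama} to remove $H_m$;
\item the finite dual expansion (Lemma~\ref{lem: dual cauchy}) and the zonal product rule (Lemma~\ref{lem: LR rule}) for $\varDelta_{12}$;
\item the Selberg--Jack integral (Lemma~\ref{lem: Selberg}) for the nuisance eigenvalues.
\end{itemize}
For each fixed $(\mu,\nu)$, insert the Schur expansion \eqref{eq:Kostka-1-2} of $J_{\nu'}(-\varLambda_d^{-1};1/2)$. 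Then write
\[
\varDelta_{11}\,s_\sigma(-\varLambda_d^{-1}) = \prod_{i\le d}(\lambda_i-\lambda_k)\,(-1)^s\det\bigl(\lambda_i^{d-j-\sigma_{d+1-j}}\bigr)
\]
using the Weyl formula \eqref{lem:Weyl}. Expanding $\prod_{i\le d}(\lambda_i-\lambda_k)$ and the determinant as finite signed sums of monomials $\prod_i\lambda_i^{z_i}$, apply \eqref{eq:frac} monomial by monomial. Recombining these gives exactly $\mathscr M^{(k)}_{\sigma,r,s}(\underline q_0)$ in \eqref{eq:calM}, that is, the operator expression displayed in \eqref{eq:final-local}. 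At this stage I would check two points:
\begin{itemize}
\item the column reversal and the signs $(-1)^{\sigma_j}$ absorb $(-1)^{\binom d2}$ and $(-1)^s$;
\item every exponent produces an absolutely convergent Laplace integral on $W_k$.
\end{itemize}
For the second point, the only possibly negative exponents fall on $\lambda_i$ with $i\le d$, of size at least $v+m-\sigma_1\ge v+m-m\ge v>-1$, since $\nu_1\le k-1$ forces $\sigma_1\le\nu'_1\le m$. Moreover $\lambda_i>\lambda_k$ on $W_k$, so the singular exponents are harmless there. This step is routine once it is restricted to a finite family of terms.

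\textbf{Step 2 (interchange, the main obstacle).} The density of $(h_1,\dots,h_k)$ is the integral of \eqref{eq:pre-final} over $W_k$. We need to integrate the $r$-series term by term, since the $s$, $\nu$, $\sigma$ and $\phi$ sums are finite. By Fubini--Tonelli, it suffices to show that the sum over $r,\mu$ of the integrals of absolute values is finite. I would bound these quantities as follows:
\begin{itemize}
\item $|C_\mu(-\tfrac12\varSigma_m)|\le C_\mu(\tfrac12\varSigma_m)$, because $\varSigma_m>0$ and $C_\mu$ has nonnegative coefficients in the eigenvalues;
\item $0\le (a)_\phi/(a+b)_\phi\le1$;
\item $\sum_\phi g^\phi_{\mu\nu}C_\phi(I_m)=C_\mu(I_m)C_\nu(I_m)$.
\end{itemize}
Together these give $|B_{\mu\nu}|\le C_\mu(I_m)C_\nu(I_m)$. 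Summing over $\mu\vdash r$ with $\sum_\mu C_\mu(\tfrac12\varSigma_m)=(\tfrac12\operatorname{tr}\varSigma_m)^r$, the $r$-th absolute term is bounded by a constant times
\[
\int_{W_k}\frac{(\tfrac12\operatorname{tr}\varSigma_m\,\lambda_k)^r}{r!}\,P(\lambda)\,e^{-\frac12\sum_i q_{0i}\lambda_i}\,d\lambda,
\]
where $P$ is a fixed finite sum of absolute monomials. Summing over $r$ produces $e^{\frac12\operatorname{tr}\varSigma_m\lambda_k}$. Since $\lambda_k\le\lambda_i$ on $W_k$, the exponent is at most
\[
-\tfrac12\Bigl(\textstyle\sum_i q_{0i}-\operatorname{tr}\varSigma_m\Bigr)\lambda_k-\tfrac12\sum_{i<k}q_{0i}(\lambda_i-\lambda_k),
\]
which is integrable precisely under \eqref{eq:local-convergence-condition}. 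The monomial exponents exceed $-1$ by the check in Step 1. The bound also needs care with the factor $\lambda_k^{u+r+s}$ combined with the negative powers of $\lambda_i$ for $i<k$. Since $\lambda_i>\lambda_k$, those negative powers are dominated by $\lambda_k$ powers, and the total $\lambda_k$ exponent stays above $-1$ because of the $\lambda_k^{u}$ factor. This domination yields absolute convergence of \eqref{eq:final-local} and justifies the term-by-term Laplace evaluation.

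\textbf{Step 3 (conclusion).} After Step 2, integrating \eqref{eq:pre-final} over $W_k$ equals the termwise sum, which is \eqref{eq:final-local}. It is the marginal probability element of $(h_1,\dots,h_k)$ on the sign quotient fundamental domain of Lemma~\ref{lem: Tumura}. The degenerate cases are handled by convention: when $k=p$ ($m=0$) the series collapses to its $r=s=0$ term, and when $d=0$ the determinant equals $1$.
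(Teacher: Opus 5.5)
Your proposal is correct. Its skeleton matches the paper's: the Weyl-determinant assembly with reversed columns and column signs, then a Tonelli argument. The dominating function is a polynomial times \(e^{-\frac12\sum_iq_{0i}\lambda_i+\frac12\lambda_k\operatorname{tr}\varSigma_m}\), which is integrable on \(W_k\) under \eqref{eq:local-convergence-condition}.

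The difference is where the zonal series is dominated. The paper does it \emph{before} the nuisance eigenvalues are integrated. By the quotient form of Lemma~\ref{lem:sugiyama}, the absolute series \(\sum_{r,\mu}|C_\mu(-\tfrac12\varSigma_m)|C_\mu(\varLambda_m)/\{r!C_\mu(I_m)\}\) equals \(c_m^{-1}\int_{\mathcal R_m}\exp\{\tfrac12\operatorname{tr}(\varSigma_mR\varLambda_mR')\}(dR)\), which is at most \(e^{\lambda_k\operatorname{tr}\varSigma_m/2}\) since \(\varLambda_m<\lambda_kI_m\). One Tonelli step then covers the orthogonal, nuisance and leading-eigenvalue integrals together, and no information about \(B_{\mu\nu}\) is needed.

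You instead start from \eqref{eq:pre-final} and dominate at the coefficient level, using \(|B_{\mu\nu}|\le C_\mu(I_m)C_\nu(I_m)\) and \(\sum_{\mu\vdash r}C_\mu(\tfrac12\varSigma_m)=(\tfrac12\operatorname{tr}\varSigma_m)^r\). This makes the decay in \(r\) explicit: it is essentially geometric with ratio \(\operatorname{tr}\varSigma_m/\sum_iq_{0i}\), which is useful for truncation. It does leave two points for you to supply.

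\textbf{Sign of the coefficients.} Your three bullet facts give \(|B_{\mu\nu}|\le C_\mu(I_m)C_\nu(I_m)\) only if \(g^\phi_{\mu\nu}\ge0\). That is true for zonal polynomials, via their spherical-function interpretation, but you never state it. It is cleaner to read off from Lemma~\ref{lem: Selberg} (with \(\phi=\varnothing\) as normalizer) that \(B_{\mu\nu}\) is the average of \(C_\mu(L)C_\nu(L)\) under the normalized Selberg weight on \(0<L<I_m\). This gives \(0\le B_{\mu\nu}\le C_\mu(I_m)C_\nu(I_m)\) directly.

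\textbf{Pointwise validity of \eqref{eq:pre-final}.} This already requires exchanging the \(r\)-series with the nuisance integral. For fixed \(\lambda_k\), that exchange is dominated convergence on a compact set with the same bound \(e^{\lambda_k\operatorname{tr}\varSigma_m/2}\), but it should be said explicitly.

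\textbf{Minor slips.} The bound \(\sigma_1\le\nu'_1\le m\) comes from \(\ell(\nu)\le m\), not from \(\nu_1\le k-1\). Also, your extra domination of negative powers of \(\lambda_i\) by powers of \(\lambda_k\) is unnecessary. Once every exponent exceeds \(-1\), each monomial is already integrable near the boundary of \(W_k\).
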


\begin{proof}
The case \(m=0\) contains only the empty partition, so assume
\(m\geq1\). Fix the leading frame and put
\(A=\varSigma_m\), \(Q_j=\sum_{i=1}^jq_{0i}\), and
\(\delta=Q_k-\operatorname{tr}A>0\). For \(\mu\vdash r\), homogeneity
gives \(C_\mu(-A/2)=(-1)^rC_\mu(A/2)\). Because \(0<\varLambda_m<\lambda_kI_m\), the quotient form of Lemma~\ref{lem:sugiyama}, applied with the matrix arguments \(A/2\) and \(\varLambda_m\), gives
\[
\begin{aligned}
&c_m
\sum_{r=0}^{\infty}
\sum_{\substack{\mu\vdash r\\\ell(\mu)\leq m}}
\frac{|C_\mu(-A/2)|C_\mu(\varLambda_m)}
{r!C_\mu(I_m)}
\\
&\qquad=
\int_{\mathcal R_m}
\exp\left\{
\frac12\operatorname{tr}(AR\varLambda_mR')
\right\}(dR)
\leq
c_m\exp\left\{
\frac{\lambda_k}{2}\operatorname{tr}A
\right\}.
\end{aligned}
\]

The dual Cauchy expansion is finite. Moreover,
\(\nu'_1=\ell(\nu)\leq m\) and \(\sigma_1\leq\nu'_1\), so the factor
\((\lambda_1\cdots\lambda_d)^m\) removes every negative power in the
Jack and Schur terms. Thus the absolute values of the remaining finite
terms are bounded by a polynomial in the eigenvalues. Up to such a
polynomial factor, the absolute integrand is bounded by
\[
\prod_{i=1}^p\lambda_i^v
\exp\left[-\frac12\left\{
\sum_{i=1}^kq_{0i}\lambda_i
-\lambda_k\operatorname{tr}A
\right\}\right].
\]
Let \(t_j=\lambda_j-\lambda_{j+1}\) for \(j<k\), and let
\(t_k=\lambda_k\). Then
\[
\sum_{i=1}^kq_{0i}\lambda_i
-\lambda_k\operatorname{tr}A
=
\sum_{j=1}^{k-1}Q_jt_j+\delta t_k.
\]
All coefficients on the right are positive. Also,
\(\sum_{i=1}^p\lambda_i\leq
\sum_{j=1}^{k-1}jt_j+pt_k\), so the exponential dominates every
polynomial at infinity. At the zero and collision boundaries,
\(v>-1\) and the Vandermonde factors are locally integrable. The
termwise absolute integral is therefore finite.

Tonelli's theorem now justifies interchanging the zonal series with the
orthogonal and eigenvalue integrals. All subsequent partition
expansions are finite. Equation~\eqref{eq:frac} applies term by term
because, for \(\ell=d+1-j\),
\[
v+m+d-j-\sigma_{d+1-j}
=v+m+\ell-1-\sigma_\ell
\geq v+\ell-1>-1,
\]
and \(v+u+m+r+s>-1\). The ordered Selberg integral contributes
\(c_m^{-1}B_m(a,b)\), which cancels the factor \(c_m\) from the
orthogonal integral. Finally, the reverse column order absorbs
\((-1)^{\binom d2}\), and the column signs multiply to
\((-1)^{|\sigma|}=(-1)^s\). This gives \eqref{eq:final-local} with the
stated constant and proves absolute convergence.
\end{proof}

For \(k=p\), all zero-dimensional quantities are interpreted by the
conventions \(B_0=1\), \(C_\varnothing(I_0)=1\), and
\(\varGamma_0=1\). Sums over partitions of length zero then contain
only the empty partition.

For \(k=1\), all determinants and products indexed by
\(1,\ldots,k-1\) are empty and equal \(1\). Moreover,
\(s=0\), \(\nu=\sigma=\varnothing\), and
\(\varPhi_1(q_1)=2/q_1\). Therefore,
\[
\mathscr M_{\varnothing,u+r}^{(1)}(q_1)
=
\mathscr D_1^{v+u+r}\frac{2}{q_1}
=
\frac{
2^{v+u+r+1}
\varGamma(v+u+r+1)
}{
q_1^{v+u+r+1}
},
\]
which recovers the single gamma integral in
Sugiyama's formula \eqref{eq:sugiyama66}.

\subsection{Applying Kummer transformation}
\label{sec:sugiyama-revisited}

Condition \eqref{eq:local-convergence-condition} enters when the
orthogonal exponential is expanded before the unbounded leading eigenvalue
integral is taken. 
Also, \eqref{eq:final-local} is an alternating series in $-\tfrac12 \varSigma_m$. Thus the underlying
density may exist for every positive definite \(\varSigma\), but the
particular series in \eqref{eq:final-local} is not a globally valid
termwise representation.

The issue is already visible in the smallest case \(k=1\) and \(p=2\).
Let \(h_2\) be a unit vector orthogonal to \(h_1\), and set
\(q_1=h_1'\varSigma^{-1}h_1\),
\(q_2=h_2'\varSigma^{-1}h_2\), \(a=(n-1)/2\), and
\(c=(n+3)/2\).
The specialization of \eqref{eq:final-local}, with respect to the
angular element \(d\theta_{12}\), is
\begin{equation}
\frac{B_1(a,2)\varGamma(n)}
{\varGamma_2(n/2)|\varSigma|^{n/2}}
q_1^{-n}
{}_2F_1\left(n,a;c;-\frac{q_2}{q_1}\right)d\theta_{12}.
\label{eq:sugiyama-p2-local}
\end{equation}
The defining Gauss series in \eqref{eq:sugiyama-p2-local} is absolutely
convergent for \(q_2/q_1<1\). Kummer's form of Pfaff's transformation
gives
\[
q_1^{-n}{}_2F_1\left(n,a;c;-\frac{q_2}{q_1}\right)
=
(q_1+q_2)^{-n}{}_2F_1\left(n,2;c;\frac{q_2}{q_1+q_2}\right).
\]
Since \(0<q_2/(q_1+q_2)<1\), the series on the right converges throughout
the positive definite parameter space. This is the scalar instance of
the complement transformation used below and agrees with the
globally convergent version of Sugiyama's formula \cite{Sugiyama1966,Ishizaki2009}.

For the general case, define \(\tilde q_i=q_i\) and
\(\tilde q_{0i}=q_{0i}\) for \(1\leq i<k\), together with
\(\tilde q_k=q_k+\operatorname{tr}\varSigma_m\), and
\(\tilde q_{0k}=q_{0k}+\operatorname{tr}\varSigma_m\). Also, define
\begin{equation}\label{eq:constants-Bmunu-global}
\widetilde B_{\mu\nu}
=
\sum_{\substack{
	\phi\vdash|\mu|+|\nu|\\
	\ell(\phi)\leq m
	}}
	g_{\mu\nu}^{\phi}
	\frac{(b)_\phi}{(a+b)_\phi}
	C_\phi(I_m).
\end{equation}
Using the same reverse partition order, define
\begin{equation}
\begin{aligned}
\widetilde{\mathscr M}_{\sigma,r,s}^{(k)}(\underline{\tilde q})
=
\mathscr D_k^{v+u+m+r+s}
\prod_{i=1}^{d}\mathscr D_i^v
\det\left(
(\mathscr D_i-\mathscr D_k)^{m+d+1-j-\sigma_{d+1-j}}
\right)_{i,j=1}^{d}
\varPhi_k(\underline{\tilde q}).
\end{aligned}
\label{eq:calM-global}
\end{equation}

\begin{theorem}[Globally convergent series representation]
\label{cor:joint-density-global}
Let \(1\leq k\leq p\), \(n>p-1\), and let \(\varSigma\) be positive
definite. Put \(d=k-1\), \(m=p-k\), \(v=(n-p-1)/2\), \(a=(n-k)/2\),
\(b=(p-k+3)/2\), and \(u=(p-k)(n-k)/2\). The joint probability element of
\(h_1,\ldots,h_k\) is
\begin{align}
&\frac{B_m(a,b)}
{2^{np/2}\varGamma_p(n/2)|\varSigma|^{n/2}}
\sum_{r=0}^{\infty}
\sum_{\substack{\mu\vdash r\\\ell(\mu)\leq m}}
\frac{C_\mu(\tfrac12\varSigma_m)}{r!C_\mu(I_m)}
\sum_{s=0}^{m(k-1)}\frac{1}{s!}
\sum_{\substack{\nu\vdash s,\ \nu_1\leq k-1\\
\ell(\nu)\leq m}} \widetilde B_{\mu\nu}
\sum_{\substack{\sigma\vdash s,\ \sigma\leq\nu'\\
\ell(\sigma)\leq k-1}}
\notag\\
&\qquad\times
K_{\nu'\sigma}(1/2) \prod_{i=1}^k\prod_{j=i+1}^{p-1}
\sin^{p-j}\theta_{ij}
\bigwedge_{i=1}^k\bigwedge_{j=i+1}^{p}d\theta_{ij}\times 
\mathscr D_k^{v+u+m+r+s}
\prod_{i=1}^{d}\mathscr D_i^v
\notag\\
&\qquad\times
\begin{vmatrix}
(\mathscr D_1-\mathscr D_k)^{m+d-\sigma_d}
& (\mathscr D_1-\mathscr D_k)^{m+d-1-\sigma_{d-1}}
& \cdots
& (\mathscr D_1-\mathscr D_k)^{m+1-\sigma_1}\\
(\mathscr D_2-\mathscr D_k)^{m+d-\sigma_d}
& (\mathscr D_2-\mathscr D_k)^{m+d-1-\sigma_{d-1}}
& \cdots
& (\mathscr D_2-\mathscr D_k)^{m+1-\sigma_1}\\
\vdots & \vdots & \ddots & \vdots\\
(\mathscr D_d-\mathscr D_k)^{m+d-\sigma_d}
& (\mathscr D_d-\mathscr D_k)^{m+d-1-\sigma_{d-1}}
& \cdots
& (\mathscr D_d-\mathscr D_k)^{m+1-\sigma_1}
\end{vmatrix}
\varPhi_k(\underline{\tilde q})
\Big|_{\underline{\tilde q}=\underline{\tilde q}_0}.
\label{eq:final-global}
\end{align}
The infinite series is absolutely convergent over the entire positive
definite matrix space.
\end{theorem}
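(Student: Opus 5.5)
The idea is to redo the derivation of Theorem~\ref{thm:joint-density-top-k} after a matrix Kummer transformation of the nuisance block, so that every term in the expansion is positive. In the joint element \eqref{eq:joint-eigendecomposition} the nuisance factor is $\operatorname{etr}(-\tfrac12 A R\varLambda_m R')$ with $A=\varSigma_m$. Write $\varLambda_m=\lambda_k L_m$ with $0<L_m<I_m$, and insert $\operatorname{etr}(-\tfrac12\lambda_k A)\operatorname{etr}(\tfrac12\lambda_k A)$. This gives
\[
\operatorname{etr}\bigl(-\tfrac12 A R\varLambda_m R'\bigr)
=\exp\bigl(-\tfrac12\lambda_k\operatorname{tr}A\bigr)\,
\operatorname{etr}\bigl(\tfrac12\lambda_k A R(I_m-L_m)R'\bigr).
\]
The first factor shifts $q_{0k}$ to $\tilde q_{0k}=q_{0k}+\operatorname{tr}\varSigma_m$. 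The second factor is expanded exactly as before, using the quotient form of Lemma~\ref{lem:sugiyama}, but now with the nonnegative arguments $A/2$ and $\lambda_k(I_m-L_m)$. This produces $C_\mu(\tfrac12\varSigma_m)\,\lambda_k^{r}\,C_\mu(I_m-L_m)/(r!\,C_\mu(I_m))$, and every term is nonnegative.

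Next I handle the mixed Vandermonde factor in the complementary variables $y_a=1-l_a$. Since $\lambda_i-\lambda_{k+a}=(\lambda_i-\lambda_k)+\lambda_k y_a$, I get
\[
\varDelta_{12}=\prod_{i=1}^d(\lambda_i-\lambda_k)^m\prod_{i,a}\bigl(1+y_a\lambda_k(\lambda_i-\lambda_k)^{-1}\bigr),
\]
and Lemma~\ref{lem: dual cauchy} expands this as a finite sum of $J_{\nu'}\bigl(\lambda_k(\varLambda_d-\lambda_kI)^{-1};1/2\bigr)C_\nu(Y_m)$. Now both the $\mu$ and $\nu$ factors are polynomials in $Y_m=I_m-L_m$, so Lemma~\ref{lem: LR rule} combines them into $\sum_\phi g^\phi_{\mu\nu}C_\phi(Y_m)$. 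The change of variables $l\mapsto 1-l$ swaps the exponents in Lemma~\ref{lem: Selberg}: the weight $(1-l_a)$ now plays the role of $x^{a-p}$ and $l_a^{v}$ that of $(1-x)^{b-p}$. The Selberg–Jack integral then returns $(b)_\phi/(a+b)_\phi$ in place of $(a)_\phi/(a+b)_\phi$, which is exactly $\widetilde B_{\mu\nu}$. The nuisance integral contributes $\lambda_k^{u+m+r+s}$ as before.

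For the ordered top integral, I expand $J_{\nu'}$ in Schur functions via \eqref{eq:Kostka-1-2}, now with argument $\lambda_k(\varLambda_d-\lambda_kI)^{-1}$, whose entries are positive, so no sign factors $(-1)^{\sigma}$ arise. Then I apply the Weyl formula \eqref{lem:Weyl} to $s_\sigma$ in the variables $\lambda_k/(\lambda_i-\lambda_k)$. Multiplying the result by $\varDelta_{11}=\varDelta_d\prod_i(\lambda_i-\lambda_k)$ and by $\prod_i(\lambda_i-\lambda_k)^m$ should turn the determinant into one with entries $(\lambda_i-\lambda_k)^{m+d+1-j-\sigma_{d+1-j}}$. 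The leftover powers of $\lambda_k$ go into $\mathscr D_k$, and the factor $\lambda_i^v$ gives $\prod_i\mathscr D_i^v$. Checking this power and sign bookkeeping (the Vandermonde in reciprocal-type variables, and whether the $\lambda_k$ exponents collapse exactly to $v+u+m+r+s$) is routine but delicate. Each entry is a polynomial in $\lambda_i-\lambda_k$ with nonnegative integer exponents, since $\sigma_\ell\le m$. So \eqref{eq:frac} converts it to $(\mathscr D_i-\mathscr D_k)$ powers acting on $\varPhi_k(\tilde q)$.

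The main obstacle is justifying the termwise interchange globally. I would repeat the Tonelli argument from the proof of Theorem~\ref{thm:joint-density-top-k}. The sum of the absolute values of the $r$-series is exactly $\int_{\mathcal R_m}\operatorname{etr}(\tfrac12\lambda_k AR(I-L_m)R')(dR)\le c_m e^{\lambda_k\operatorname{tr}A/2}$. Combined with the prefactor $e^{-\lambda_k\operatorname{tr}A/2}$, this leaves the integrable kernel $\exp\{-\tfrac12\sum_i q_{0i}\lambda_i\}$ with all coefficients $Q_j>0$. This holds with no condition on $\varSigma$. The dual, Jack and Schur sums are finite, so only polynomial growth remains. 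Since all terms are nonnegative after Kummer, the series converges absolutely for every positive definite $\varSigma$, and it equals the density by monotone convergence.
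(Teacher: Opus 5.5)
Your proposal is correct and follows the paper's proof essentially step for step. The shared steps are the Kummer shift $\tilde q_{0k}=q_{0k}+\operatorname{tr}\varSigma_m$ via the complementary nuisance variables, the nonnegative orbital expansion, and the dual Cauchy expansion of $\prod_{i,a}(1+\lambda_k y_a/\delta_i)$ with $\delta_i=\lambda_i-\lambda_k$. They continue with the Selberg--Jack integral with swapped parameters $(b,a)$ giving $\widetilde B_{\mu\nu}$, the reverse-column Weyl determinant in the $\delta_i$, and Tonelli by domination with the original Wishart kernel. Your bound $c_m e^{\lambda_k\operatorname{tr}A/2}$ is just the paper's pointwise bound by $1$, integrated over $\mathcal R_m$.

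One small bookkeeping correction: because you place $\lambda_k$ inside the argument of $J_{\nu'}$, the nuisance integral contributes only $\lambda_k^{u+m+r}$, not $\lambda_k^{u+m+r+s}$ "as before". The remaining factor $\lambda_k^{s}$ comes from the homogeneity of $J_{\nu'}$ (equivalently of $s_\sigma$), and that is exactly what makes the exponent of $\mathscr D_k$ equal to $v+u+m+r+s$.
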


\begin{proof}
The case \(m=0\) is immediate, so assume \(m\geq1\) and put
\(A=\varSigma_m\). In the nuisance eigenvalue integral, set
\(y_a=\lambda_k-\lambda_{p+1-a}\), \(a=1,\ldots,m\). By Haar
invariance, the orthogonal exponential becomes
\[
\exp\left\{-\frac{\lambda_k}{2}\operatorname{tr}A\right\}
\operatorname{etr}\left\{
\frac12AR\operatorname{diag}(y_1,\ldots,y_m)R'
\right\}.
\]
Hence \(q_k\) is replaced by
\(\widetilde q_k=q_k+\operatorname{tr}A\), and
\(\sum_{i=1}^k\widetilde q_{0i}=\operatorname{tr}\varSigma^{-1}\).

After scaling \(y_a=\lambda_kx_a\), the nuisance spectral measure is
\(\lambda_k^{u+m}|X||I_m-X|^v\Delta(X)\,dX\). Since
\(1=b-(m+1)/2\) and \(v=a-(m+1)/2\), this is the ordered matrix beta
weight with parameters \((b,a)\). The positive orbital expansion and
Macdonald's dual identity contribute respectively the factors indexed
by \(\mu\) and \(\nu\); the Selberg--Jack integral then gives
\(c_m^{-1}B_m(a,b)\widetilde B_{\mu\nu}\), cancelling the orbital
factor \(c_m\).

Put \(\delta_i=\lambda_i-\lambda_k\). The mixed Vandermonde factors as
\[
\prod_{i=1}^{d}\delta_i^m
\prod_{i=1}^{d}\prod_{a=1}^{m}
\left(1+\frac{\lambda_kx_a}{\delta_i}\right).
\]
After the Jack--Schur expansion, the reverse-column Weyl formula turns
\(\Delta_{11}\prod_i\delta_i^m\) into the determinant with entries
\(\delta_i^{m+d+1-j-\sigma_{d+1-j}}\). The substitutions
\(\lambda_i\mapsto\mathscr D_i\) and
\(\delta_i\mapsto\mathscr D_i-\mathscr D_k\) therefore yield
\eqref{eq:calM-global}, and hence \eqref{eq:final-global}.

Finally, for \(0<X<I_m\),
\[
\exp\left\{-\frac{\lambda_k}{2}\operatorname{tr}A\right\}
\operatorname{etr}\left\{\frac{\lambda_k}{2}ARXR'\right\}
=
\exp\left\{-\frac{\lambda_k}{2}
\operatorname{tr}[A(I_m-RXR')]\right\}\leq1.
\]
Thus the homogeneous orbital expansion has nonnegative terms and is
dominated by the original Wishart kernel, so Tonelli's theorem applies
for every \(A>0\). The remaining Macdonald and Jack--Schur sums are
finite, and the powers of \(\delta_i\) cancel all reciprocal factors.
Therefore the resulting series is absolutely convergent throughout
the positive definite matrix space.
\end{proof}

For \(k=1\), Theorem~\ref{cor:joint-density-global} gives the
Kummer-transformed Sugiyama series for arbitrary \(p\).
For \(k=2\), Section~\ref{sec:k2-explicit} gives a Gauss
hypergeometric representation of the globally convergent terms, and
Appendix~\ref{app:k2} records the specializations for \(p=3,4,5\).
The coefficients \(K_{\lambda\mu}(1/2)\) through degree six are listed
in Appendix~\ref{app:kostka}.

\section{Explicit Evaluation for \texorpdfstring{\(k=2\)}{k=2}}
\label{sec:k2-explicit}

We now specialize the globally convergent representation in
Theorem~\ref{cor:joint-density-global}. Put
\(m=p-2\), \(v=(n-p-1)/2\), \(u=(p-2)(n-2)/2\),
\(\tilde q_2=q_2+\operatorname{tr}\varSigma_m\), and
\(\widetilde Q=q_1+\tilde q_2\).
For \(r\in\mathbb N_0\) and \(0\leq s\leq m\), define
\[
\begin{aligned}
\widetilde{\mathcal M}_{r,s}^{(p)}(q_1,\tilde q_2)
&=
\int_{\lambda_1>\lambda_2>0}
\lambda_1^v
\lambda_2^{v+u+m+r+s}
(\lambda_1-\lambda_2)^{m+1-s}
\\
&\qquad\times
\exp\left\{-\frac12
(q_1\lambda_1+\tilde q_2\lambda_2)\right\}
d\lambda_1\,d\lambda_2.
\end{aligned}
\]
This is the \(k=2\) specialization of
\(\widetilde{\mathscr M}_{(s),r,s}^{(2)}\). Since
\(K_{(s),(s)}(1/2)=(s+1)!/2^s\), the finite partition sum in the global
density is
\begin{equation}
\sum_{s=0}^{m}
\frac{s+1}{2^s}\,
\widetilde B_{\mu,(1^s)}
\widetilde{\mathcal M}_{r,s}^{(p)}(q_1,\tilde q_2).
\label{eq:k2-global-finite-sum}
\end{equation}
Every term in \eqref{eq:k2-global-finite-sum} is nonnegative.

\begin{theorem}[Stable Gauss hypergeometric representation]
\label{thm:k2-hypergeometric}
Let \(A_r=np/2+r\) and
\(\tilde\beta_{r,s}=v+u+m+r+s\). Also put
\(\tilde c_r=v+u+2m+r+3\).
For \(q_1,\tilde q_2>0\), \(n>p-1\), \(r\in\mathbb N_0\), and
\(0\leq s\leq p-2\),
\begin{align}
\widetilde{\mathcal M}_{r,s}^{(p)}
&=
2^{A_r}\varGamma(A_r)
B\bigl(\tilde\beta_{r,s}+1,m+2-s\bigr)
q_1^{-A_r}
{}_2F_1\left(
A_r,\tilde\beta_{r,s}+1;
\tilde c_r;
-\frac{\tilde q_2}{q_1}
\right)
\notag\\
&=
2^{A_r}\varGamma(A_r)
B\bigl(\tilde\beta_{r,s}+1,m+2-s\bigr)
\widetilde Q^{-A_r}
{}_2F_1\left(
A_r,m+2-s;
\tilde c_r;
\frac{\tilde q_2}{\widetilde Q}
\right).
\label{eq:k2-global-hypergeometric}
\end{align}
The second line has argument in \((0,1)\) and is therefore the stable
form for numerical evaluation. The first line is understood through
analytic continuation when \(\tilde q_2/q_1\geq1\).
\end{theorem}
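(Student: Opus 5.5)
The plan is to evaluate the two-dimensional integral defining \(\widetilde{\mathcal M}_{r,s}^{(p)}\) directly, using the scaling substitution \(\lambda_2=x\lambda_1\) with \(0<x<1\), rather than working through the operator determinant \eqref{eq:calM-global}. The Jacobian is \(\lambda_1\), and \((\lambda_1-\lambda_2)^{m+1-s}=\lambda_1^{m+1-s}(1-x)^{m+1-s}\). The integrand then factors as
\[
\lambda_1^{v+\tilde\beta_{r,s}+m+2-s}\,x^{\tilde\beta_{r,s}}(1-x)^{m+1-s}
\exp\Bigl\{-\tfrac{\lambda_1}{2}(q_1+\tilde q_2x)\Bigr\}.
\]
The first step is the exponent bookkeeping. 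Using \(2v=n-p-1\), \(u=(p-2)(n-2)/2\) and \(m=p-2\),
\[
v+\tilde\beta_{r,s}+m+2-s=2v+u+2m+r+2=\tfrac{np}{2}+r-1=A_r-1 .
\]
So the \(\lambda_1\)-integral is a single gamma integral. It equals \(2^{A_r}\varGamma(A_r)(q_1+\tilde q_2x)^{-A_r}\), and it converges because \(A_r>0\) and \(q_1+\tilde q_2x>0\).

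For the first line of \eqref{eq:k2-global-hypergeometric}, I would write \((q_1+\tilde q_2x)^{-A_r}=q_1^{-A_r}(1+x\tilde q_2/q_1)^{-A_r}\). Euler's integral representation gives
\[
\int_0^1 x^{b-1}(1-x)^{c-b-1}(1-zx)^{-a}\,dx=B(b,c-b)\,{}_2F_1(a,b;c;z),
\]
valid for \(\Re c>\Re b>0\) and \(z\notin[1,\infty)\). Apply it with \(a=A_r\), \(b=\tilde\beta_{r,s}+1\), \(c-b=m+2-s\) and \(z=-\tilde q_2/q_1\).

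The parameter checks are as follows. We have \(c=\tilde\beta_{r,s}+m+3-s=v+u+2m+r+3=\tilde c_r\). Next, \(\tilde\beta_{r,s}+1>0\) because \(v>-1\). Finally, \(m+2-s\ge 2>0\) because \(s\le m\). Since \(z<0\) always, the Euler integral itself is the analytic continuation of the Gauss series beyond \(|z|<1\). This justifies the remark that the first line holds through analytic continuation when \(\tilde q_2/q_1\ge1\).

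For the second line, the cleanest route is to substitute \(x\mapsto 1-x\) in the same Euler integral. This gives \(q_1+\tilde q_2(1-x)=\widetilde Q(1-x\,\tilde q_2/\widetilde Q)\), and the roles of the two beta exponents are exchanged. The Euler formula then yields \(\widetilde Q^{-A_r}B(m+2-s,\tilde\beta_{r,s}+1)\,{}_2F_1(A_r,m+2-s;\tilde c_r;\tilde q_2/\widetilde Q)\), and the beta function is symmetric. As a cross-check, this is exactly Pfaff's transformation \({}_2F_1(a,b;c;z)=(1-z)^{-a}{}_2F_1(a,c-b;c;z/(z-1))\), with \(1-z=\widetilde Q/q_1\) and \(z/(z-1)=\tilde q_2/\widetilde Q\in(0,1)\).

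The main obstacle is not analytic, since the interchange is Tonelli on a nonnegative integrand. It is the careful verification that the exponent identity \(v+\tilde\beta_{r,s}+m+2-s=A_r-1\) and the identity \(c=\tilde c_r\) hold exactly, independently of \(s\). This \(s\)-independence is what makes the first two parameters of the hypergeometric function uniform across the finite sum \eqref{eq:k2-global-finite-sum}, so I would double-check this arithmetic first.
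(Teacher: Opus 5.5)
Your proposal is correct and follows essentially the paper's route. Both arguments substitute \(\lambda_2=x\lambda_1\), use \(v+\tilde\beta_{r,s}+m+2-s=A_r-1\) to reduce the \(\lambda_1\)-integral to a gamma integral, and apply Euler's integral for the first line; for the second line you reflect \(x\mapsto 1-x\) where the paper cites the Kummer--Pfaff transformation, but that reflection is exactly the standard proof of Pfaff's identity, so the two arguments coincide.
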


\begin{proof}
The substitution \(\lambda_1=x\) and \(\lambda_2=tx\) gives
\[
\widetilde{\mathcal M}_{r,s}^{(p)}
=
2^{A_r}\varGamma(A_r)
\int_0^1
\frac{
t^{\tilde\beta_{r,s}}
(1-t)^{m+1-s}
}{
(q_1+\tilde q_2t)^{A_r}
}\,dt.
\]
Euler's integral yields the first line of
\eqref{eq:k2-global-hypergeometric}, and the Kummer--Pfaff
transformation yields the second. In conjunction with the positive
coefficients \(C_\mu(\varSigma_m/2)\) and
\(\widetilde B_{\mu,(1^s)}\), this representation preserves the global
absolute convergence established in
Theorem~\ref{cor:joint-density-global}.
\end{proof}

\begin{corollary}[Elementary form when \(n=p+1\)]
\label{cor:k2-elementary}
If \(n=p+1\), then \(v=0\). With
\(\tilde\beta_{r,s}=u+m+r+s\),
\[
\widetilde{\mathcal M}_{r,s}^{(p)}
=
\frac{
2^{A_r}
\varGamma(\tilde\beta_{r,s}+1)
\varGamma(m+2-s)
}{
\widetilde Q^{\,\tilde\beta_{r,s}+1}
q_1^{m+2-s}
}.
\]
\end{corollary}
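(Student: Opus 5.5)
The plan is to specialize Theorem~\ref{thm:k2-hypergeometric} at \(v=0\) and to observe that the Gauss function in the stable second line of \eqref{eq:k2-global-hypergeometric} then degenerates to an elementary binomial. Everything reduces to one parameter identity, namely that the lower parameter \(\tilde c_r\) coincides with the upper parameter \(A_r\) when \(n=p+1\).

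\emph{Step 1 (parameter bookkeeping).} If \(n=p+1\) then \(v=(n-p-1)/2=0\), \(m=p-2\) and \(u=(p-2)(p-1)/2\). I will check
\[
\tilde c_r=u+2m+r+3=\frac{(p-2)(p-1)+4p-2}{2}+r=\frac{p(p+1)}{2}+r=\frac{np}{2}+r=A_r .
\]
By the same computation, \(A_r=(\tilde\beta_{r,s}+1)+(m+2-s)\), because \(\tilde\beta_{r,s}+1+m+2-s=u+2m+r+3\).

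\emph{Step 2 (collapse of \({}_2F_1\)).} Since \(\tilde c_r=A_r\), the identity \({}_2F_1(A,b;A;z)=(1-z)^{-b}\) applies. With \(z=\tilde q_2/\widetilde Q\in(0,1)\) we have \(1-z=q_1/\widetilde Q\), so the second line of \eqref{eq:k2-global-hypergeometric} becomes
\[
2^{A_r}\varGamma(A_r)B(\tilde\beta_{r,s}+1,m+2-s)\,\widetilde Q^{-A_r}\Bigl(\frac{\widetilde Q}{q_1}\Bigr)^{m+2-s}.
\]

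\emph{Step 3 (simplify the constants).} By Step~1,
\[
\varGamma(A_r)B(\tilde\beta_{r,s}+1,m+2-s)=\varGamma(\tilde\beta_{r,s}+1)\varGamma(m+2-s),
\]
since \(\varGamma(A_r)\) is exactly the denominator \(\varGamma\bigl(\tilde\beta_{r,s}+1+m+2-s\bigr)\) of the Beta function. The remaining power of \(\widetilde Q\) is \(-A_r+m+2-s=-(\tilde\beta_{r,s}+1)\). This gives the stated closed form.

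As a cross-check that avoids hypergeometric functions, one can start from the one-dimensional integral in the proof of Theorem~\ref{thm:k2-hypergeometric}, namely \(\int_0^1 t^{\tilde\beta}(1-t)^{m+1-s}(q_1+\tilde q_2 t)^{-A_r}\,dt\) with \(A_r=(\tilde\beta+1)+(m+2-s)\). The classical identity
\[
\int_0^1 t^{\alpha-1}(1-t)^{\gamma-1}(q_1+\tilde q_2t)^{-\alpha-\gamma}\,dt=B(\alpha,\gamma)\,q_1^{-\gamma}(q_1+\tilde q_2)^{-\alpha}
\]
then gives the same result; it follows from the substitution \(t=q_1w/(q_1+\tilde q_2-\tilde q_2 w)\). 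The only real obstacle is the arithmetic in Step~1, i.e.\ confirming \(\tilde c_r=A_r\). Everything after that is standard.
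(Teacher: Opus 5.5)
Your proof is correct, but it takes a different route from the paper. The paper does not go through Theorem~\ref{thm:k2-hypergeometric}. It substitutes the spacing variables $x=\lambda_2$, $y=\lambda_1-\lambda_2$ directly into the double integral defining $\widetilde{\mathcal M}_{r,s}^{(p)}$, so that $q_1\lambda_1+\tilde q_2\lambda_2=\widetilde Q x+q_1 y$. Once $\lambda_1^v\equiv 1$, the integrand factorizes into two gamma integrals, $\varGamma(\tilde\beta_{r,s}+1)(2/\widetilde Q)^{\tilde\beta_{r,s}+1}\cdot\varGamma(m+2-s)(2/q_1)^{m+2-s}$.

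You instead specialize the stable ${}_2F_1$ form. You observe that in general $A_r-\tilde c_r=v$, so at $v=0$ the Gauss function reduces to the binomial $(1-z)^{-(m+2-s)}$, and $\varGamma(A_r)$ cancels the Beta denominator.

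The two routes buy different things:
\begin{itemize}
\item The paper's route is shorter and self-contained. It also shows directly why $n=p+1$ is special: $\lambda_1^v$ is the only factor preventing separation.
\item Your route relies on the hypergeometric theorem being correct, but it doubles as a consistency check of that theorem. It also states explicitly the identity $(\tilde\beta_{r,s}+1)+(m+2-s)=np/2+r$ at $n=p+1$. The paper needs this identity too, silently, to write $2^{\tilde\beta_{r,s}+1}\cdot 2^{m+2-s}$ as $2^{A_r}$.
\end{itemize}

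Your hypergeometric-free cross-check is essentially the paper's argument in disguise. Composing $t=\lambda_2/\lambda_1$ with your substitution gives $w=\widetilde Q x/(\widetilde Q x+q_1 y)$, which is the standard beta--gamma decoupling of the two separated exponentials.
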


\begin{proof}
The formula follows directly from the spacing variables
\(x=\lambda_2\) and \(y=\lambda_1-\lambda_2\), because the factor
\(\lambda_1^v\) equals one.
\end{proof}

Appendix~\ref{app:k2} records the
reduction and the low-dimensional cases \(p=3,4,5\).
\section{The Pushforward Law and Exact Principal Subspace Inference}
\label{sec:principal-subspace}

We push the ordered frame law in
Theorem~\ref{cor:joint-density-global} onto the Grassmannian and use
the resulting exact finite-sample law to assess the Gaussian weighted
bootstrap of \cite{NaumovSpokoinyUlyanov2019}.

\subsection{The pushforward Grassmann density}

Let \[\mathbb V_{p,k}=\{U\in\mathbb R^{p\times k}:U'U=I_k\}, \quad \mathbb G(p,k)=\{P=P'=P^2:\operatorname{tr}P=k\}.\]
The eigenvector object is a frame of ordered unoriented lines,
\(\mathbb V_{p,k}^{\pm}=\mathbb V_{p,k}/\mathcal D_k\). We continue
to write \(U\) for one of its representatives. The natural projection
maps \([U]\) to \(UU'\). Each fibre is naturally identified with
\(\mathcal R_k=O(k)/\mathcal D_k\).

Use the quotient measures of Section~\ref{sec:orthogonal}, so that
\(\int_{\mathcal R_k}(dR)=c_k\). Let \((dP)\) be the invariant measure
characterized by
\begin{equation}
\int_{\mathbb V_{p,k}^{\pm}}f(U)(dH_k)
=
\int_{\mathbb G(p,k)}\int_{\mathcal R_k}f(UR)(dR)(dP),
\label{eq:stiefel-grassmann-disintegration}
\end{equation}
for every integrable function \(f\) invariant under column sign
changes. Retain the notation \(m=p-k\), \(a\), \(b\), \(u\), \(v\), and
\(\widetilde B_{\mu\nu}\) from
Theorem~\ref{cor:joint-density-global}, and write its prefactor as
\(\mathcal N_{p,k,n}(\varSigma)=B_m(a,b)/
\{2^{np/2}\varGamma_p(n/2)|\varSigma|^{n/2}\}\).
The total mass of \((dP)\) is
\[
\operatorname{vol}\mathbb G(p,k)
=
\frac{c_p}{c_kc_m}
=
\pi^{km/2}
\frac{\varGamma_k(k/2)}{\varGamma_k(p/2)}.
\]

For \(P\in\mathbb G(p,k)\), choose \(U_P\in\mathbb V_{p,k}\) with
\(U_PU_P'=P\), choose an orthonormal basis matrix \(U_{P,\perp}\) for
its orthogonal complement, and put
\begin{equation}
\begin{aligned}
\mathcal A(P) & =U_P'\varSigma^{-1}U_P,\\
\mathcal A_\perp(P) & =U_{P,\perp}'\varSigma^{-1}U_{P,\perp},\\
q_i(P,R) & =e_i'R'\mathcal A(P)Re_i.
\label{eq:section8-compressions}
\end{aligned}
\end{equation}
Set \(\tilde q_i(P,R)=q_i(P,R)\) for \(1\leq i<k\) and
\(\tilde q_k(P,R)=q_k(P,R)+
\operatorname{tr}\mathcal A_\perp(P)\).

\begin{theorem}[The pushforward Grassmann density]
\label{thm:grassmann-pushforward}
Let \(1\leq k\leq p\), let \(S\sim W_p(n,\varSigma)\), where
\(n>p-1\) and \(\varSigma\) is positive definite, and put
\(\widehat P_k=\sum_{i=1}^kh_ih_i'\). Relative to the invariant
quotient measure \((dP)\) in
\eqref{eq:stiefel-grassmann-disintegration}, the exact density of
\(\widehat P_k\) is
\begin{align}
g_{p,k}(P,\varSigma)
&=
\mathcal N_{p,k,n}(\varSigma)
\sum_{r=0}^{\infty}
\sum_{\substack{\mu\vdash r\\\ell(\mu)\leq m}}
\frac{
	C_\mu(\tfrac12\mathcal A_\perp(P))
}{r!C_\mu(I_m)}
\sum_{s=0}^{m(k-1)}\frac{1}{s!}
\sum_{\substack{\nu\vdash s,\ \nu_1\leq k-1\\
		\ell(\nu)\leq m}}
\widetilde B_{\mu\nu}
\notag\\
&\qquad\times
\sum_{\substack{\sigma\vdash s,\ \sigma\leq\nu'\\
		\ell(\sigma)\leq k-1}}
K_{\nu'\sigma}(1/2)
\int_{\mathcal R_k}
\widetilde{\mathscr M}_{\sigma,r,s}^{(k)}
(\underline{\tilde q}(P,R))(dR).
\label{eq:grassmann-density}
\end{align}
The right hand side is independent of the choices of \(U_P\) and
\(U_{P,\perp}\). 
\end{theorem}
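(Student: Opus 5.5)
The density of $\widehat P_k$ is the pushforward of the frame law in Theorem~\ref{cor:joint-density-global} under $[U]\mapsto UU'$. I will disintegrate this law along the fibres $\mathcal R_k$ using \eqref{eq:stiefel-grassmann-disintegration}, and then check that the resulting fibre integral depends only on $P$. Write $f(U)$ for the frame density with respect to $(dH_k)$, where $U=[h_1,\ldots,h_k]$. Then for any bounded measurable $\psi$ on $\mathbb G(p,k)$,
\[
E\,\psi(\widehat P_k)=\int_{\mathbb V_{p,k}^{\pm}}\psi(UU')f(U)(dH_k)
=\int_{\mathbb G(p,k)}\psi(P)\int_{\mathcal R_k}f(U_PR)(dR)\,(dP).
\]
This uses $(U_PR)(U_PR)'=P$. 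It also requires $f$ to be invariant under column sign changes. That holds because each $q_{0i}=h_i'\varSigma^{-1}h_i$ is even in $h_i$, and $\varSigma_m$ is a compression onto the complement of $\operatorname{span}(h_1,\ldots,h_k)$, which I address next. Hence the density of $\widehat P_k$ is $g(P)=\int_{\mathcal R_k}f(U_PR)(dR)$.

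Next I rewrite $f(U_PR)$ in terms of $P$ and $R$. For the frame $U=U_PR$ we have $q_{0i}=e_i'R'U_P'\varSigma^{-1}U_PRe_i=q_i(P,R)$. The complementary block $\varSigma_m=(H_k'\varSigma^{-1}H_k)_{k+1:p,k+1:p}$ equals $W'\varSigma^{-1}W$, where $W$ is the last $m$ columns of $H_k$. These columns form an orthonormal basis of $P^\perp$, so $W=U_{P,\perp}O$ for some $O\in O(m)$, and therefore $\varSigma_m=O'\mathcal A_\perp(P)O$.

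Every place $\varSigma_m$ enters the density is invariant under this conjugation. The zonal polynomials $C_\mu(\tfrac12\varSigma_m)$ are orthogonally invariant, and the trace in $\tilde q_{0k}$ is invariant. So we may replace $\varSigma_m$ by $\mathcal A_\perp(P)$, giving $\tilde q_{0k}=q_k(P,R)+\operatorname{tr}\mathcal A_\perp(P)$. The angular factor $\prod\sin^{p-j}\theta_{ij}\bigwedge d\theta_{ij}$ is exactly $(dH_k)$ and is absorbed into the disintegration. Substituting these identifications into \eqref{eq:final-global}, the integrand becomes the triple series multiplied by $\widetilde{\mathscr M}^{(k)}_{\sigma,r,s}(\underline{\tilde q}(P,R))$.

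The step I expect to need the most care is moving $\int_{\mathcal R_k}$ inside the infinite sum over $r$ and $\mu$. Theorem~\ref{cor:joint-density-global} gives absolute convergence with nonnegative dominating terms: $C_\mu(\tfrac12\mathcal A_\perp)\ge0$ because $\mathcal A_\perp>0$, $\widetilde B_{\mu\nu}$ is a positive combination, and each term comes from a positive integrand over the Weyl chamber. So I will work at the level of the integral over eigenvalues and orthogonal variables, before any operator evaluation, where Tonelli applies directly.

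There is one delicacy here. The individual $K_{\nu'\sigma}$-weighted determinant terms may not be nonnegative separately. Their sum over $\nu$ and $\sigma$ is finite, however, and it reconstructs the nonnegative factor $\prod_{i,a}(1+\lambda_kx_a/\delta_i)$. So I group the finite sums before exchanging the $\mathcal R_k$ integral with the series in $r$. Since $\mathcal R_k$ is compact and $\sum_i\tilde q_i(P,R)=\operatorname{tr}\varSigma^{-1}$ is constant, the summed integrand is bounded uniformly in $R$. Together with monotone convergence, this gives the termwise formula \eqref{eq:grassmann-density}.

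Finally, independence of the choices of $U_P$ and $U_{P,\perp}$. Any other basis of $P$ is $U_PO_1$ with $O_1\in O(k)$. This changes $q_i(P,R)$ into $q_i(P,O_1R)$, and the change is undone by the left translation $R\mapsto O_1R$ on $\mathcal R_k$. That translation is well defined on $O(k)/\mathcal D_k$ because $\mathcal D_k$ acts on the right, and it preserves the quotient Haar measure. Changing $U_{P,\perp}$ to $U_{P,\perp}O_2$ conjugates $\mathcal A_\perp(P)$, which leaves both $C_\mu$ and the trace unchanged.
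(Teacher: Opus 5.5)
Your proposal is correct and follows the paper's proof. You disintegrate the frame law of Theorem~\ref{cor:joint-density-global} along the $\mathcal R_k$ fibres, identify $q_{0i}$ with $q_i(P,R)$ and $\varSigma_m$ with a conjugate of $\mathcal A_\perp(P)$, and obtain independence from $U_P$ and $U_{P,\perp}$ via the translation invariance of $(dR)$ and the conjugation invariance of $C_\mu$ and the trace. You give more detail than the paper on sign invariance and on the termwise interchange, where grouping the finite $\nu,\sigma$ sums and applying Tonelli already suffices. The uniform boundedness in $R$ is not needed, and it would come from the lower bounds on the partial sums $\tilde q_1+\cdots+\tilde q_j$ rather than from $\sum_i\tilde q_i$ being constant.
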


\begin{proof}
Integrating the ordered frame density in
Theorem~\ref{cor:joint-density-global} along the fibres in
\eqref{eq:stiefel-grassmann-disintegration} yields
\eqref{eq:grassmann-density}. Replacing \(U_P\) by \(U_PQ\) changes the
fibre variable from \(R\) to \(Q'R\), so invariance of \((dR)\) proves
independence from the chosen frame. Changing \(U_{P,\perp}\) conjugates
\(\mathcal A_\perp(P)\), which leaves every zonal polynomial unchanged.
Finally, integration over \(P\) gives one because the frame law is
normalized.
\end{proof}

\subsection{Identifiability and the rank-two testing problem}

For the remainder of this section, fix \(p=4\) and \(k=2\). Let
\(X_1,\ldots,X_n\) be independent with
\(X_i\sim N_4(0,\varSigma)\), where \(n>3\), and set
\[
\widehat{\varSigma}=\frac1n\sum_{i=1}^nX_iX_i'.
\]
Let \(\widehat P_2\) be the projector onto the two leading sample
eigendirections. We consider the two-level covariance model
\begin{equation}
\varSigma(\lambda_2,\rho,P_*)
=
\lambda_1P_*+\lambda_2(I_4-P_*)
=
\lambda_2\{I_4+(\rho-1)P_*\},
\quad
\rho=\frac{\lambda_1}{\lambda_2}>1,
\label{eq:two-level-covariance}
\end{equation}
where \(P_*\in\mathbb G(4,2)\) and \(\lambda_2>0\). The leading and
trailing population eigenvalues both have multiplicity two. Thus no
particular orthonormal basis within either eigenspace is identifiable.
The leading subspace itself is identifiable because the two blocks are
separated by the eigengap \(\lambda_1-\lambda_2>0\). Indeed,
\[
P_*=\frac{\varSigma-\lambda_2I_4}{\lambda_1-\lambda_2}.
\]
At \(\rho=1\), this expression is undefined and the distribution is
isotropic, so \(P_*\) is not identifiable. The regime
\(\rho\downarrow1\) is consequently a weak identification boundary.
By contrast, the eigenvalues of \(\widehat{\varSigma}\) are distinct
with probability one, and \(\widehat P_2\) is well defined.

For \(P_0\in\mathbb G(4,2)\) specified independently of the sample,
consider
\[
H_0:P_*=P_0
\quad\text{against}\quad
H_1:P_*\ne P_0.
\]
Let \(0\leq\theta_1\leq\theta_2\leq\pi/2\) be the principal angles
between \(\widehat P_2\) and \(P_0\). The squared chordal distance on
\(\mathbb G(4,2)\) is
\[
\mathfrak d^2(P,Q)
=
\sum_{j=1}^2\sin^2\theta_j(P,Q)
=
\frac12\|P-Q\|_F^2.
\]
We use the statistic
\[
\begin{aligned}
T_{n,2}(P_0)
&=
n\|\widehat P_2-P_0\|_F^2
=
2n\{2-\operatorname{tr}(\widehat P_2P_0)\}\\
&=
2n\sum_{j=1}^2\sin^2\theta_j
=
2n\,\mathfrak d^2(\widehat P_2,P_0).
\end{aligned}
\]
It is invariant under all choices of bases within the two subspaces and
takes values in \([0,4n]\). Since multiplication of
\(\widehat{\varSigma}\) by a positive scalar does not change its
eigenvectors, the null distribution of \(T_{n,2}(P_0)\) is independent
of \(\lambda_2\). It depends on the covariance model only through
\(n\) and \(\rho\).

\subsection{The exact null distribution and confidence regions}

Under \(H_0\), orthogonal invariance allows us to take
\(P_0=\operatorname{diag}(I_2,0)\) and \(\lambda_2=1\), so that
\(\varSigma=\operatorname{diag}(\rho I_2,I_2)\). Let
\(1>x_1>x_2>0\) be the squared cosines of the two principal angles
between \(P\) and \(P_0\), and put \(D_x=\operatorname{diag}(x_1,x_2)\).
The \(O(2)\times O(2)\) symmetry permits the representatives
\[
U_x=
\begin{pmatrix}D_x^{1/2}\\(I_2-D_x)^{1/2}\end{pmatrix},
\quad
U_{x,\perp}=
\begin{pmatrix}-(I_2-D_x)^{1/2}\\D_x^{1/2}\end{pmatrix}.
\]
With \(R_\psi\) denoting a rotation in two dimensions, the compressions
in \eqref{eq:section8-compressions} become
\[
\begin{aligned}
\mathcal A_\perp(x)
&=
\operatorname{diag}\{\rho^{-1}+(1-\rho^{-1})x_i\}_{i=1}^2,\\
\zeta_i
&=1-(1-\rho^{-1})x_i,
\quad i=1,2,\\
q_1(x,\psi)
&=
\zeta_1\cos^2\psi+\zeta_2\sin^2\psi,\\
q_2(x,\psi)
&=
\zeta_1\sin^2\psi+\zeta_2\cos^2\psi,\\
\tau(x)
&=
\operatorname{tr}\mathcal A_\perp(x)
=2\rho^{-1}+(1-\rho^{-1})(x_1+x_2),\\
\tilde q_2(x,\psi)
&=q_2(x,\psi)+\tau(x).
\end{aligned}
\]
The density is invariant under column sign changes, so the fibre is represented
by \(0\leq\psi<\pi\) with \((dR)=d\psi\). Hence
\[
g_{n,\rho}(P_x)
=
\int_0^\pi f_{n,\rho}^{(2)}(U_xR_\psi)\,d\psi,
\quad
\overline g_{n,\rho}(x_1,x_2)=2\pi^2g_{n,\rho}(P_x),
\]
where the factor \(2\pi^2=\operatorname{vol}\mathbb G(4,2)\) converts
the density relative to \((dP)\) into the density relative to the
invariant probability measure.
Here \(f_{n,\rho}^{(2)}\) is the ordered frame density relative
to \((dH_2)\). The specialization of
Theorem~\ref{cor:joint-density-global} gives
\[
\begin{aligned}
f_{n,\rho}^{(2)}(U_xR_\psi)
&=
\mathcal N_{n,\rho}
\sum_{r=0}^{\infty}
\sum_{\substack{\mu\vdash r\\\ell(\mu)\leq2}}
\frac{C_\mu(\tfrac12\mathcal A_\perp(x))}
{r!C_\mu(I_2)}\\
&\qquad\times
\sum_{s=0}^{2}
\frac{s+1}{2^s}
\widetilde B_{\mu,(1^s)}^{(n)}
\widetilde{\mathcal M}_{r,s}^{(4)}
\bigl(q_1(x,\psi),\tilde q_2(x,\psi)\bigr),
\end{aligned}
\]
where
\(\mathcal N_{n,\rho}=B_2((n-2)/2,5/2)/
\{2^{2n}\rho^n\varGamma_4(n/2)\}\), and
\(\varGamma_4(n/2)=\pi^3\prod_{j=0}^{3}\varGamma((n-j)/2)\).
Moreover,
\[
\widetilde B_{\mu,(1^s)}^{(n)}
=
\sum_{\substack{\phi\vdash r+s\\\ell(\phi)\leq2}}
g_{\mu,(1^s)}^\phi
\frac{(5/2)_\phi}{((n+3)/2)_\phi}
C_\phi(I_2).
\]
Writing \(A_r=2n+r\),
\(\tilde\beta_{r,s}=(3n-5)/2+r+s\),
\(\tilde c_r=(3n+5)/2+r\), and
\(\widetilde Q=q_1+\tilde q_2\), the remaining ordered integral is
\[
\widetilde{\mathcal M}_{r,s}^{(4)}(q_1,\tilde q_2)
=
2^{A_r}\varGamma(A_r)
B(\tilde\beta_{r,s}+1,4-s)
\widetilde Q^{-A_r}
{}_2F_1\left(
A_r,4-s;\tilde c_r;
\frac{\tilde q_2}{\widetilde Q}
\right).
\]

After all remaining orbit angles are integrated out, the invariant
probability measure on \(\mathbb G(4,2)\) has density
\[
j_{4,2}(x_1,x_2)
=
\frac12
\frac{x_1-x_2}
{\sqrt{x_1x_2(1-x_1)(1-x_2)}},
\quad 1>x_1>x_2>0.
\]
Since \(T_{n,2}(P_0)=2n(2-x_1-x_2)\), its exact finite-sample null
distribution and quantile function are
\begin{align}
F_{n,\rho}(t)
&=
\frac12
\int_{\substack{1>x_1>x_2>0\\
	2n(2-x_1-x_2)\leq t}}
	\overline g_{n,\rho}(x_1,x_2)
	\frac{x_1-x_2}
	{\sqrt{x_1x_2(1-x_1)(1-x_2)}}
	\,dx_1\,dx_2,
	\label{eq:exact-grassmann-cdf}\\
	q_{n,\rho}(\beta)
	&=
	\inf\{t:F_{n,\rho}(t)\geq\beta\},
	\quad 0<\beta<1.
	\notag
\end{align}
Theorem~\ref{cor:joint-density-global} establishes the exact null law
in \eqref{eq:exact-grassmann-cdf}. We evaluate an equivalent integral
by importance quadrature using a proposal bank with seven components,
\(2^{16}\) Sobol points per component, and five randomizations. The grid
contains the \(0.95\)-quantile for every pair with
\(n\in\{7,10,20,40\}\) and
\(\rho\in\{1.25,1.5,2,2.5,3,4,5,6,8\}\), together with quantiles at
\(\beta\in\{0.01,0.02,\ldots,0.99\}\) for \(\rho=4\).

For \(0<\alpha<1\), define the upper tail critical value by
\(c_{n,\rho}(\alpha)=q_{n,\rho}(1-\alpha)\).

\begin{corollary}[Exact test and confidence region]
\label{cor:exact-subspace-test}
Suppose \(\rho>1\) is given. In model
\eqref{eq:two-level-covariance}, the rule
\[
\text{reject }H_0
\quad\Longleftrightarrow\quad
T_{n,2}(P_0)>c_{n,\rho}(\alpha)
\]
has level \(\alpha\) for every \(\lambda_2>0\). For the realized value
\(t_0=T_{n,2}(P_0)\), the exact \(p\)-value is
\(p(P_0)=1-F_{n,\rho}(t_0)\).
Inverting the tests gives the \((1-\alpha)\)-confidence region
\[
\mathcal C_{1-\alpha}(\rho)
=
\left\{
P\in\mathbb G(4,2):
n\|\widehat P_2-P\|_F^2\leq c_{n,\rho}(\alpha)
\right\}.
\]
It is a chordal ball centered at \(\widehat P_2\) and has exact
coverage \(1-\alpha\).
\end{corollary}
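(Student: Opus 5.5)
The proof reduces everything to a statement about the scalar statistic $T_{n,2}(P_0)$. We show that, under $H_0$, its law is exactly the distribution $F_{n,\rho}$ of \eqref{eq:exact-grassmann-cdf}, for every $\lambda_2>0$ and every fixed $P_0$. The level, $p$-value and coverage claims then follow by standard test inversion.

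\textbf{Step 1: reduce to a canonical configuration.} Fix $P_0$ and $\lambda_2$, and suppose $H_0$ holds, so $\varSigma=\lambda_2\{I_4+(\rho-1)P_0\}$.

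\emph{Removing $\lambda_2$.} Rescaling $\widehat\varSigma$ by $\lambda_2^{-1}$ leaves its eigenvectors, and hence $\widehat P_2$, unchanged. So we may take $\lambda_2=1$.

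\emph{Removing $P_0$.} Choose $O\in O(4)$ with $OP_0O'=\operatorname{diag}(I_2,0)=:E$. If $X_i\sim N_4(0,\varSigma)$, then $OX_i\sim N_4(0,O\varSigma O')$ and $O\varSigma O'=\operatorname{diag}(\rho I_2,I_2)$. The leading projector transforms as $\widehat P_2\mapsto O\widehat P_2O'$, and
\[
\|O\widehat P_2O'-E\|_F=\|\widehat P_2-P_0\|_F .
\]
Hence the law of $T_{n,2}(P_0)$ equals the law of $T_{n,2}(E)$ computed under $\varSigma=\operatorname{diag}(\rho I_2,I_2)$. This law depends only on $(n,\rho)$.

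\textbf{Step 2: identify that law with $F_{n,\rho}$.} By Theorem~\ref{thm:grassmann-pushforward}, $\widehat P_2$ has density $g_{p,k}(\cdot,\varSigma)$ with respect to $(dP)$. The statistic can be written as $T=2n(2-x_1-x_2)$, where $x_1>x_2$ are the squared cosines of the principal angles.

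Write a Grassmann point as $P=(K_1\oplus K_2)P_xU(K_1\oplus K_2)'$ with $(K_1,K_2)\in O(2)\times O(2)$. This is the polar/KAK disintegration of $(dP)$ under the stabilizer of $E$. Two facts make the reduction work:
\begin{itemize}
\item $T$ depends only on $x$;
\item $\varSigma$ commutes with $O(2)\times O(2)$, so the density is constant on the orbits.
\end{itemize}
Consequently $P\{T\le t\}$ is the integral of $\overline g_{n,\rho}(x)$ against the pushforward of the invariant probability measure onto $x$. That pushforward has density $j_{4,2}$. Substituting the explicit compressions $\mathcal A_\perp(x)$, $q_i(x,\psi)$ and $\tilde q_2$ into Theorem~\ref{cor:joint-density-global}, with $k=2$ and the fibre integral over $\psi$, reproduces $\overline g_{n,\rho}$. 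This gives exactly \eqref{eq:exact-grassmann-cdf}.

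\textbf{Main obstacle.} The step most likely to need care is confirming the Jacobian $j_{4,2}$, including its normalising constant $\tfrac12$, and the volume factor $2\pi^2$. I would take them as stated in the preceding derivation. If a check is wanted, verify that at $\rho=1$ the density $\overline g\equiv1$ integrates to one, which is Anderson's isotropic case.

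\textbf{Step 3: test inversion.} The law of $T$ has a density: $\widehat P_2$ has a density and $x\mapsto 2n(2-x_1-x_2)$ is a submersion away from a null set. So $F_{n,\rho}$ is continuous, and therefore
\[
P_{H_0}\{T>q_{n,\rho}(1-\alpha)\}=1-F_{n,\rho}\bigl(q_{n,\rho}(1-\alpha)\bigr)=\alpha .
\]
This holds for every $\lambda_2$ by Step 1, so the test has level exactly $\alpha$.

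For the $p$-value, continuity of $F_{n,\rho}$ makes $F_{n,\rho}(T)$ uniform under $H_0$. Hence $p(P_0)=1-F_{n,\rho}(t_0)$ is the exact $p$-value.

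For the confidence region, $P_*\in\mathcal C_{1-\alpha}(\rho)$ exactly when the test of $H_0:P_*=P$ does not reject at $P=P_*$. The probability of that event is $1-\alpha$ for every true $P_*$ and $\lambda_2$, which gives exact coverage. Finally, since $\tfrac12\|P-Q\|_F^2=\mathfrak d^2(P,Q)$, the set $\mathcal C_{1-\alpha}(\rho)$ is the chordal ball of radius $\sqrt{c_{n,\rho}(\alpha)/(2n)}$ centred at $\widehat P_2$.
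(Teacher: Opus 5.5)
Your proposal is correct and follows the same route as the paper's proof. That route has three steps: the null law of \(T_{n,2}(P_0)\) is invariant under rotating \(P_0\) to \(\operatorname{diag}(I_2,0)\) and rescaling by \(\lambda_2\); continuity of the Grassmann density rules out atoms, so \(F_{n,\rho}(c_{n,\rho}(\alpha))=1-\alpha\) exactly; and test inversion gives exact coverage.

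Your Step 2 re-derives the identification of the null law with \eqref{eq:exact-grassmann-cdf}, which the paper establishes in the text just before the corollary. You also make the chordal-ball radius \(\sqrt{c_{n,\rho}(\alpha)/(2n)}\) explicit, which the paper leaves implicit.
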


\begin{proof}
The conclusion follows because the null law is independent of the
orientation of \(P_0\) and the scale \(\lambda_2\). The continuity of
the Grassmann density eliminates boundary randomization. The
test inversion identity then gives
\(\Pr\{P_*\in\mathcal C_{1-\alpha}(\rho)\}=1-\alpha\).
\end{proof}

The prespecified \(\rho\) is essential. If external information only
specifies \(\rho\in\mathcal I\subset(1,\infty)\), the conservative
choices
\[
c_{n,\mathcal I}(\alpha)
=
\sup_{\rho\in\mathcal I}c_{n,\rho}(\alpha),
\quad
p(P_0,\mathcal I)
=
\sup_{\rho\in\mathcal I}
\{1-F_{n,\rho}(t_0)\}
\]
control size uniformly over \(\mathcal I\). An unrestricted nuisance
set reaching \(\rho=1\) is not meaningful for inference on \(P_*\),
because the target ceases to be identifiable at that boundary.

\subsection{Comparison with the weighted bootstrap}

We compare the exact law with the Gaussian weighted bootstrap of
\cite{NaumovSpokoinyUlyanov2019}. Conditional on the observed sample,
let
\[
\widehat{\varSigma}^{\circ}
=
\frac1n\sum_{i=1}^n w_iX_iX_i',
\quad
w_i\sim N(1,1),\quad 1\leq i\leq n,
\]
where the weights are independent. Let \(P_2^\circ\) be the projector
associated with the two largest
algebraic eigenvalues of \(\widehat{\varSigma}^{\circ}\). The
bootstrap statistic is
\[
T_{n,2}^{\circ}
=
n\|P_2^\circ-\widehat P_2\|_F^2.
\]
For the \(j\)th outer sample, write \(q_{\beta,j}^{\circ}\) for the
conditional \(\beta\)-quantile of \(T_{n,2}^{\circ}\). Its calibration
probability under the exact law and the corresponding error are
\[
C_{\beta,j}(n,\rho)
=
F_{n,\rho}(q_{\beta,j}^{\circ}),
\quad
E_{\beta,j}(n,\rho)
=
C_{\beta,j}(n,\rho)-\beta.
\]
Positive error means that, relative to an independent null draw, the
conditional bootstrap critical value is conservative.

Figure~\ref{fig:rank-two-qq} uses \(\rho=4\),
\(n\in\{7,10,20,40\}\), \(30\) outer samples, and \(1000\) Gaussian
weight vectors per sample. For the \(99\) levels
\(\beta\in\{0.01,0.02,\ldots,0.99\}\), the horizontal axis shows
\(q_{n,4}(\beta)\), evaluated from
\eqref{eq:exact-grassmann-cdf}.

\begin{figure}[htbp]
\centering
\includegraphics[width=1\linewidth]{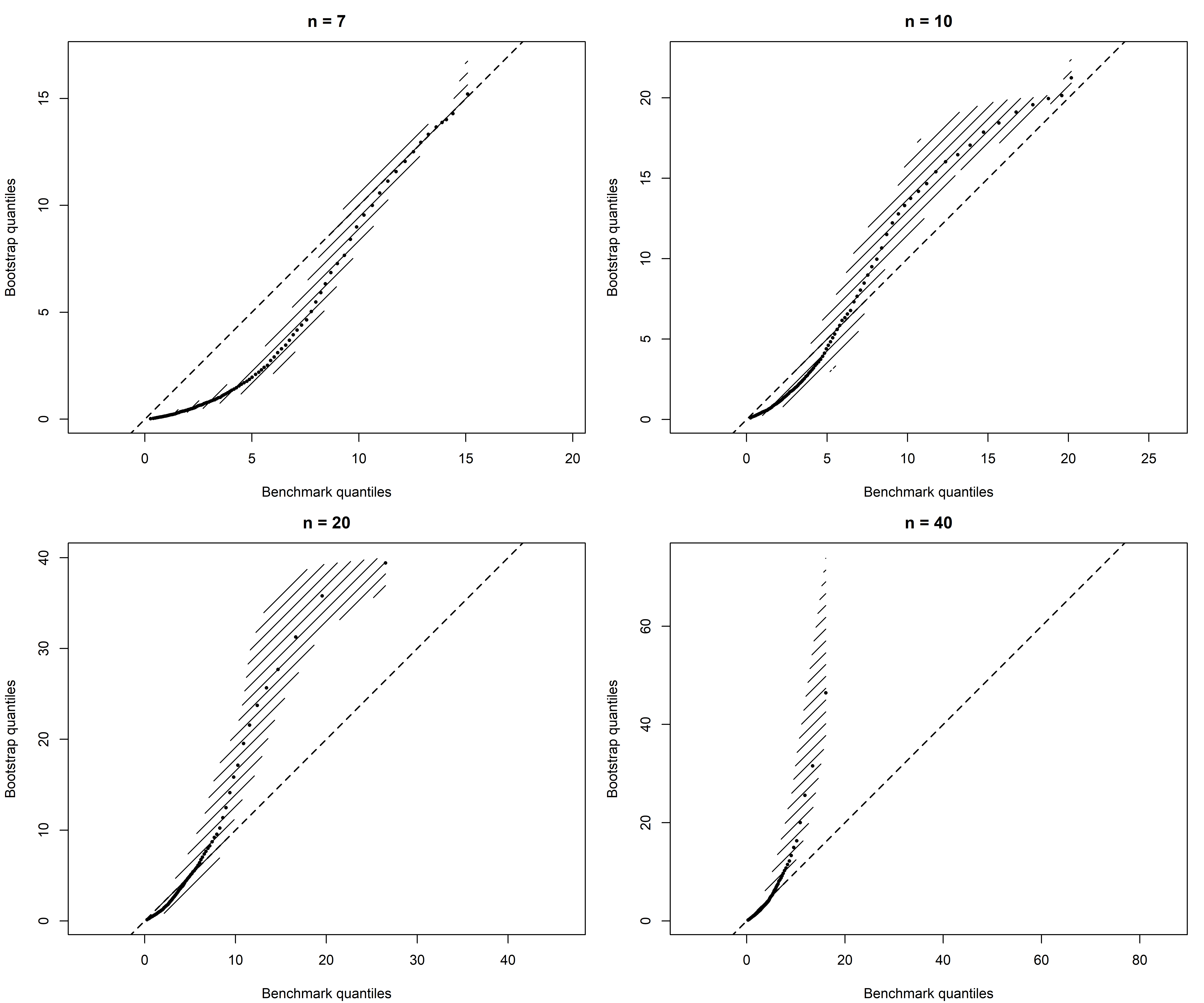}
\caption{Conditional Gaussian weighted bootstrap quantiles versus the
exact finite-sample null quantiles of \(T_{n,2}(P_0)\) at \(\rho=4\).
Each panel reports the indicated sample size using \(30\) outer samples
and \(1000\) weight vectors per sample. The solid curve is the median
conditional quantile, the hatched band is the interquartile range, and
the dashed diagonal marks exact agreement.}
\label{fig:rank-two-qq}
\end{figure}

Table~\ref{tab:rank-two-calibration} summarizes the comparison with
\cite{NaumovSpokoinyUlyanov2019}. At the nominal levels \(0.90\),
\(0.95\), and \(0.99\), it reports the median and interquartile range
of \(C_{\beta,j}(n,4)\) over the \(30\) outer samples. These summaries
remain subject to Monte Carlo error, especially at level \(0.99\).

\begin{table}[htbp]
\centering
\caption{Monte Carlo estimates of the calibration probabilities for
independent null draws under the Gaussian weighted bootstrap with
\(\rho=4\). Each entry is the median over \(30\) outer samples and uses \(1000\) Gaussian weight
vectors per sample. Interquartile ranges are in parentheses.}
\label{tab:rank-two-calibration}
\begin{tabular}{cccc}
\toprule
& \multicolumn{3}{c}{Nominal quantile level}\\
\(n\)
& \(0.90\)
& \(0.95\)
& \(0.99\)\\
\midrule
\(7\)
& \(0.896\;(0.875,\,0.925)\)
& \(0.952\;(0.940,\,0.963)\)
& \(0.991\;(0.984,\,0.998)\)
\\
\(10\)
& \(0.944\;(0.927,\,0.970)\)
& \(0.974\;(0.960,\,0.986)\)
& \(0.997\;(0.995,\,0.999)\)
\\
\(20\)
& \(0.966\;(0.922,\,0.981)\)
& \(0.989\;(0.980,\,0.996)\)
& \(0.999\;(0.997,\,0.999)\)
\\
\(40\)
& \(0.958\;(0.908,\,0.996)\)
& \(0.991\;(0.977,\,0.999)\)
& \(1.000\;(1.000,\,1.000)\)
\\
\bottomrule
\end{tabular}
\end{table}

The median calibration probabilities at level \(0.95\) are \(0.952\),
\(0.974\), \(0.989\), and \(0.991\) as \(n\) increases. All four
medians exceed \(0.95\), indicating conservative upper critical values
in this experiment. The comparison is descriptive rather than
mechanistic. Gaussian weights can make the weighted matrix indefinite,
and the conditional law is centered at \(\widehat{\varSigma}\), whose
eigengap is random. These features help explain the observed discrepancy.

\subsection{Power analysis results}

Let \(\delta_1\leq\delta_2\) be the principal angles between the true
subspace \(P_*\) and the null subspace \(P_0\). Orthogonal invariance
implies that power depends on the pair \((P_*,P_0)\) only through
these angles. The exact finite-sample power function is
\[
\begin{aligned}
\operatorname{Power}_{\alpha}(n,\rho,\delta_1,\delta_2)
&=
\Pr_{\varSigma(1,\rho,P_*)}
\{T_{n,2}(P_0)>c_{n,\rho}(\alpha)\}\\
&=
\int_{\mathbb G(4,2)}
\mathbf 1\{2n\mathfrak d^2(P,P_0)>c_{n,\rho}(\alpha)\}
g_{4,2}(P,\varSigma(1,\rho,P_*))(dP).
\end{aligned}
\]
For the isoclinic alternatives used in
Figures~\ref{fig:power-sample-size}--\ref{fig:power-principal-angle},
we take \(\delta_1=\delta_2=\delta\), represented by
\[
U_\delta=
\begin{pmatrix}\cos\delta\,I_2\\ \sin\delta\,I_2\end{pmatrix},
\quad
P_\delta=U_\delta U_\delta',
\quad
\mathfrak d(P_\delta,P_0)=\sqrt2\sin\delta.
\]
At every design point, we evaluate the critical value
\(c_{n,\rho}(0.05)=q_{n,\rho}(0.95)\) by quadrature. Each rejection probability is
estimated from \(5000\) independent alternative draws. Error bars are
pointwise \(95\%\) Wilson intervals. As
\(\delta\) tends to zero or \(\rho\) tends to one, power approaches the
nominal level. Across the plotted designs, increasing \(n\), \(\rho\),
or the principal angle separation is associated with higher power.

\begin{figure}[htbp]
\centering
\includegraphics[width=.82\linewidth]{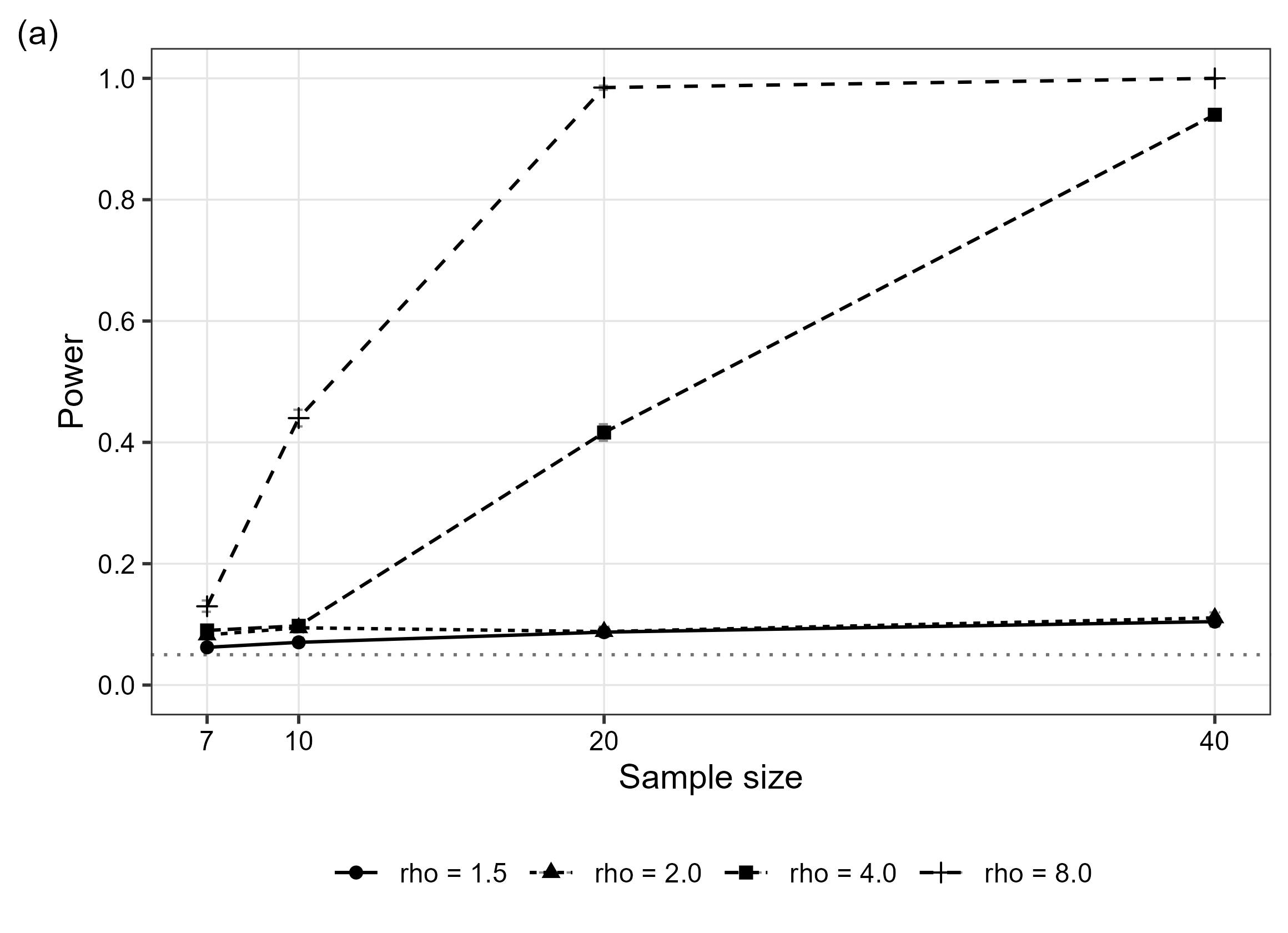}
\caption{Power versus sample size for the level \(0.05\) test of
\(H_0:P_*=P_0\). The common principal angle is \(20^\circ\),
\(n\in\{7,10,20,40\}\), and
\(\rho\in\{1.5,2,4,8\}\). Curves distinguish the four eigenvalue
ratios. Vertical bars are pointwise \(95\%\) Wilson intervals. The
dotted horizontal line marks the nominal level.}
\label{fig:power-sample-size}
\end{figure}

\begin{figure}[htbp]
\centering
\includegraphics[width=.82\linewidth]{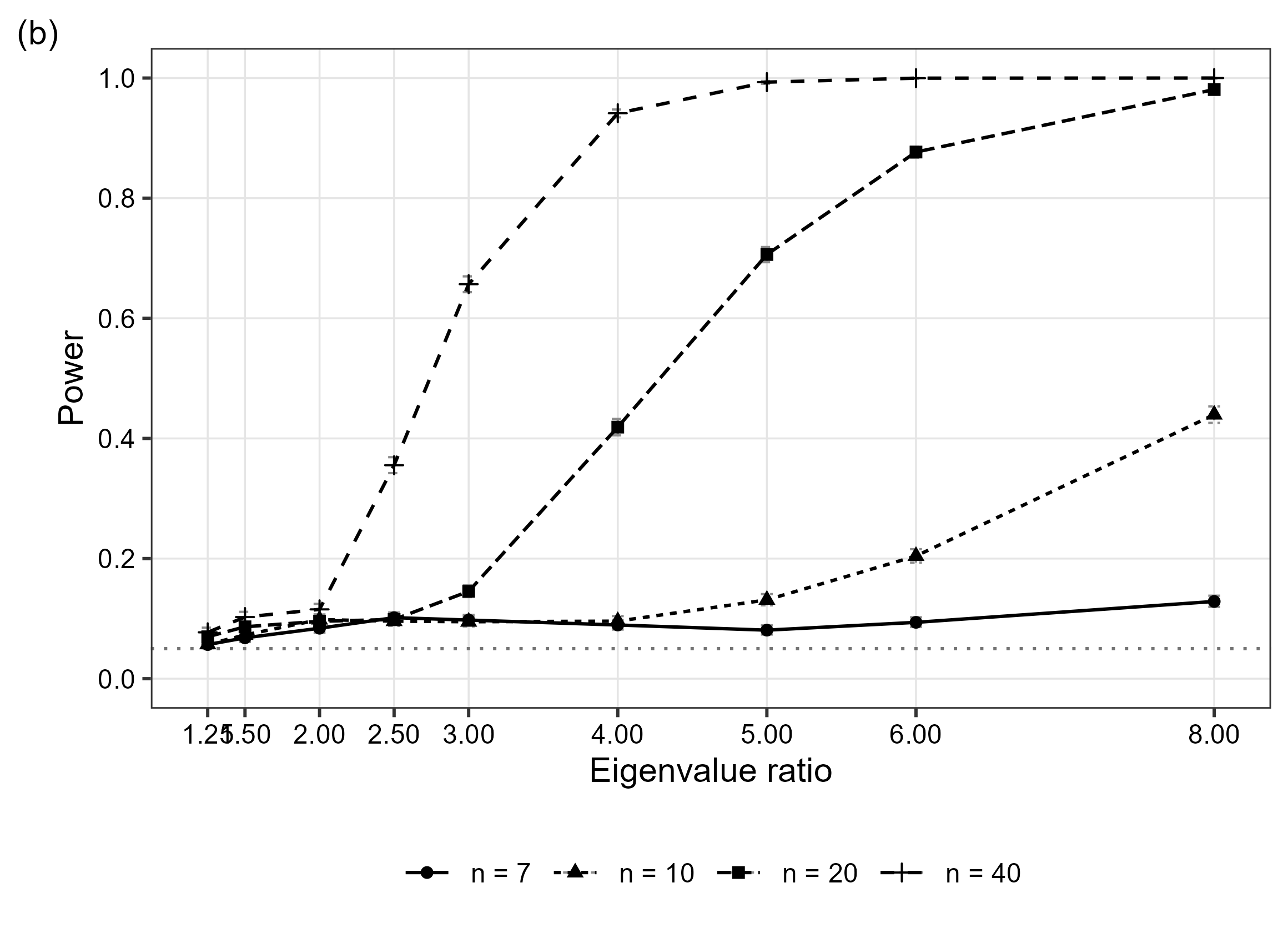}
\caption{Power versus the eigenvalue ratio for the level \(0.05\) test.
The common principal angle is \(20^\circ\),
\(n\in\{7,10,20,40\}\), and
\(\rho\in\{1.25,1.5,2,2.5,3,4,5,6,8\}\). Curves distinguish the four
sample sizes. Vertical bars are pointwise \(95\%\) Wilson intervals.
The dotted horizontal line marks the nominal level.}
\label{fig:power-eigenvalue-ratio}
\end{figure}

\begin{figure}[htbp]
\centering
\includegraphics[width=.82\linewidth]{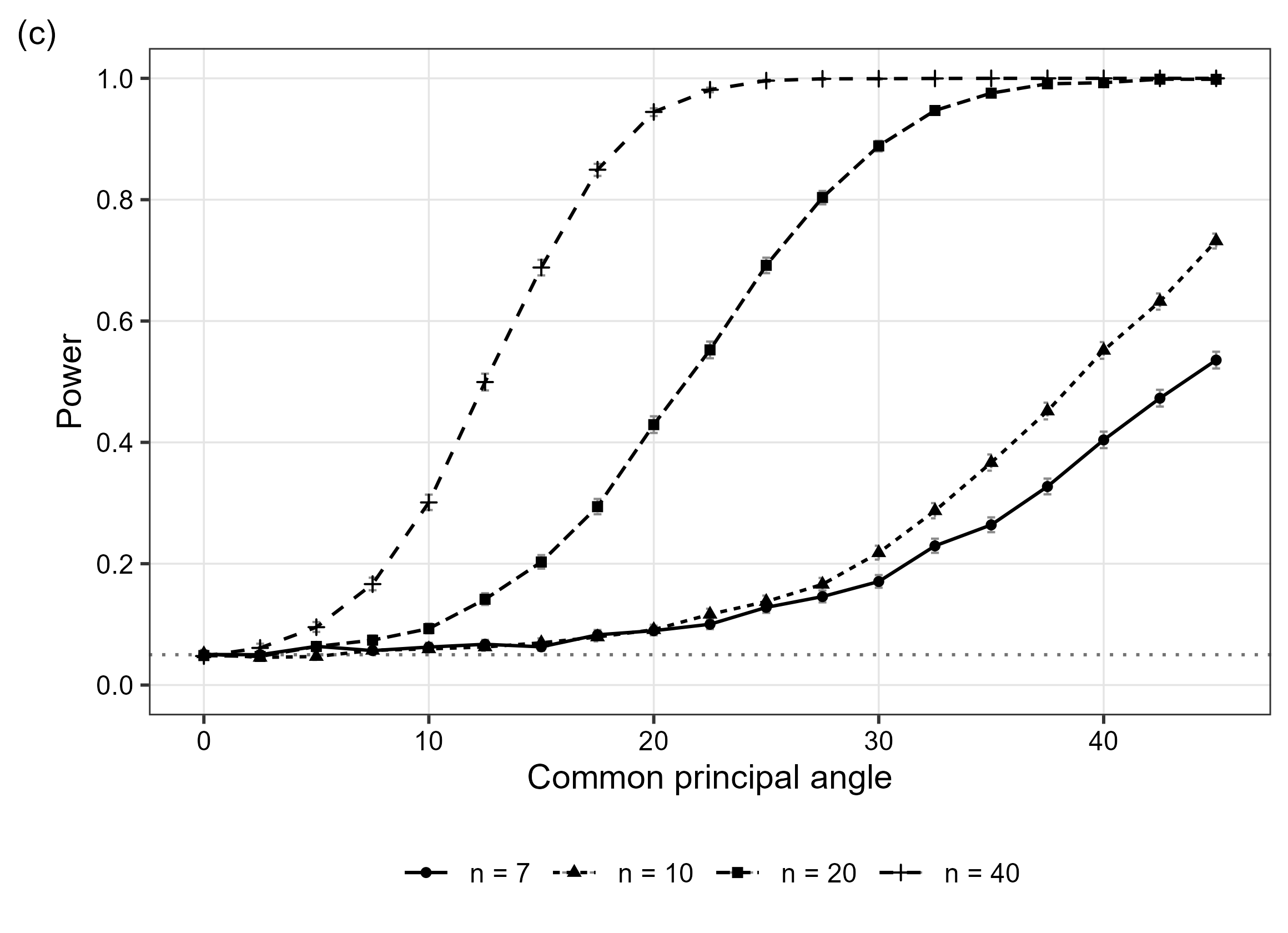}
\caption{Power versus the common principal angle for the level \(0.05\)
test. Here \(\rho=4\), \(n\in\{7,10,20,40\}\), and
\(\delta\in\{0,2.5,\ldots,45\}^\circ\). Curves distinguish the four
sample sizes. Vertical bars are pointwise \(95\%\) Wilson intervals.
The dotted horizontal line marks the nominal level.}
\label{fig:power-principal-angle}
\end{figure}

Power need not be determined by chordal distance alone. To isolate the
effect of principal angle geometry, Figure~\ref{fig:power-geometry}
compares \((\delta_1,\delta_2)=(0,\delta)\) with \((\eta,\eta)\), where
\(\sin\eta=\sin\delta/\sqrt2\). The two alternatives have the same
squared chordal distance,
\(\mathfrak d^2=\sin^2\delta=2\sin^2\eta\).

\begin{figure}[htbp]
\centering
\includegraphics[width=.82\linewidth]{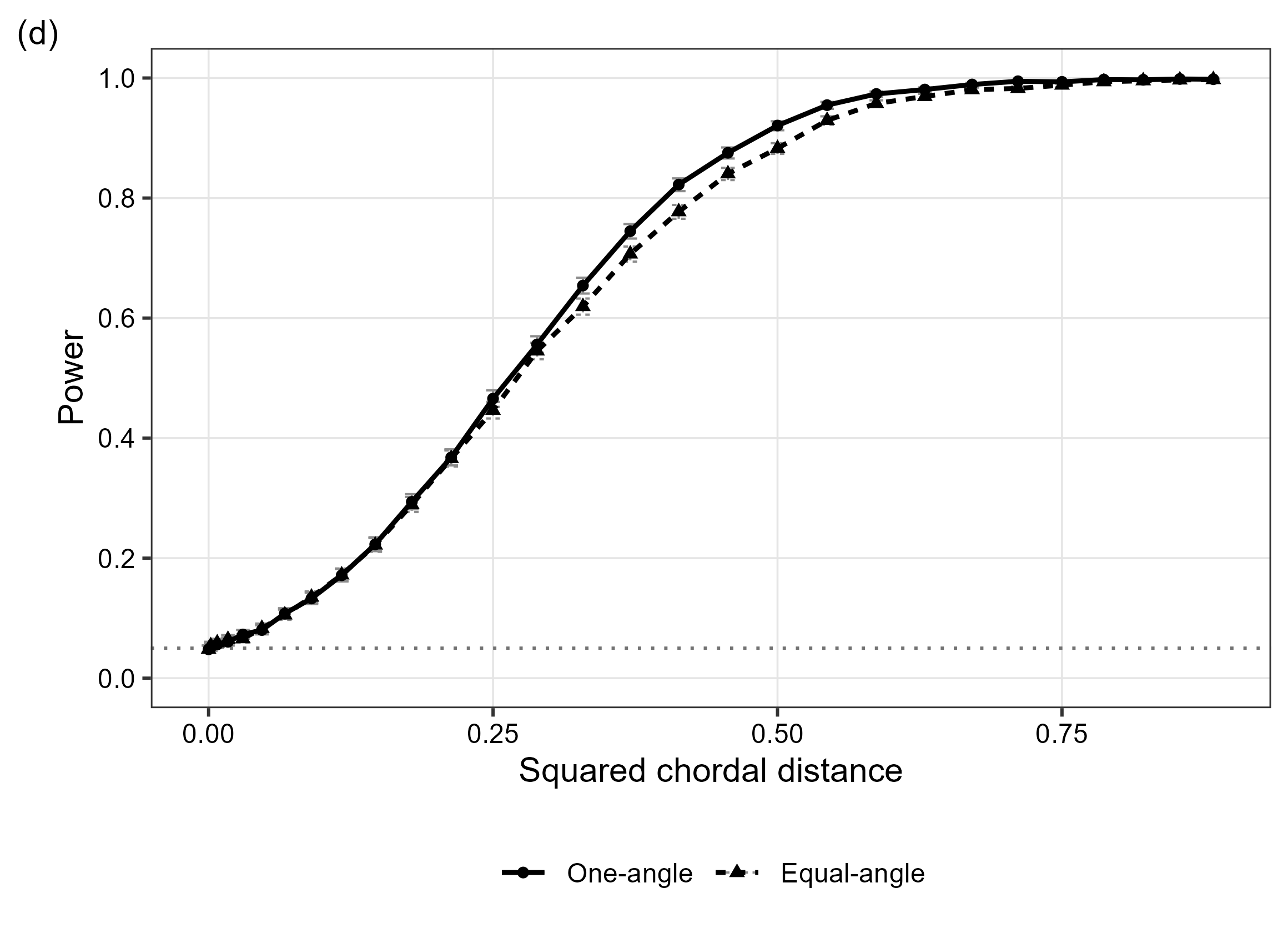}
\caption{Power for two principal angle configurations matched by squared
chordal distance. The settings are \(n=20\), \(\rho=4\),
and \(\delta\in\{0,2.5,\ldots,70\}^\circ\). The solid curve uses
\((0,\delta)\). The dashed curve uses \((\eta,\eta)\), where
\(\sin\eta=\sin\delta/\sqrt2\). Vertical bars are pointwise \(95\%\)
Wilson intervals. The dotted horizontal line marks the nominal level.}
\label{fig:power-geometry}
\end{figure}

The weighted bootstrap remains useful when the covariance is not
restricted to \eqref{eq:two-level-covariance}. In the present Gaussian
two-level model, however, Corollary~\ref{cor:exact-subspace-test}
provides a direct finite-sample calibration against which that
data-driven approximation can be assessed.

\appendix

\section{Derivation and Formulas for \texorpdfstring{\(k=2\)}{k=2}}
\label{app:k2}

This appendix derives the \(k=2\) specialization of the globally
convergent density in Theorem~\ref{cor:joint-density-global}. Put
\(m=p-2\), \(v=(n-p-1)/2\), \(u=(p-2)(n-2)/2\),
\(\tilde q_2=q_2+\operatorname{tr}\varSigma_m\), and
\(\widetilde Q=q_1+\tilde q_2\).

\subsection{Reduction of the partition sums}

The restriction \(\nu_1\leq1\) forces
\(\nu=(1^s)\), \(\nu'=(s)\), and \(0\leq s\leq m\). The complement
transformation in the proof of
Theorem~\ref{cor:joint-density-global} produces the positive
one-variable argument \(\lambda_2/(\lambda_1-\lambda_2)\). Hence
\[
J_{(s)}\left(
\frac{\lambda_2}{\lambda_1-\lambda_2};\frac12
\right)
=
\frac{(s+1)!}{2^s}
\left(
\frac{\lambda_2}{\lambda_1-\lambda_2}
\right)^s.
\]
After multiplication by \(1/s!\), the resulting positive coefficient
is \((s+1)/2^s\). Formula
\eqref{eq:calM-global} becomes
\begin{equation}
\begin{aligned}
\widetilde{\mathcal M}_{r,s}^{(p)}
&=
\mathscr D_2^{v+u+m+r+s}
\mathscr D_1^v
(\mathscr D_1-\mathscr D_2)^{m+1-s}
\varPhi_2(q_1,\tilde q_2)
\\
&=
\int_{\lambda_1>\lambda_2>0}
\lambda_1^v
\lambda_2^{v+u+m+r+s}
(\lambda_1-\lambda_2)^{m+1-s}
e^{-(q_1\lambda_1+\tilde q_2\lambda_2)/2}
d\lambda_1\,d\lambda_2.
\end{aligned}
\label{eq:k2-global-operator-app}
\end{equation}
Thus the finite sum associated with a fixed partition \(\mu\) is
\[
\sum_{s=0}^{m}
\frac{s+1}{2^s}
\widetilde B_{\mu,(1^s)}
\widetilde{\mathcal M}_{r,s}^{(p)}.
\]

\subsection{Gauss hypergeometric evaluation}

Set \(A_r=np/2+r\),
\(\tilde\beta_{r,s}=v+u+m+r+s\), and
\(\tilde c_r=v+u+2m+r+3\).
Using \(\lambda_1=x\) and \(\lambda_2=tx\) in
\eqref{eq:k2-global-operator-app} gives
\[
\widetilde{\mathcal M}_{r,s}^{(p)}
=
2^{A_r}\varGamma(A_r)
\int_0^1
\frac{t^{\tilde\beta_{r,s}}
(1-t)^{m+1-s}}
{(q_1+\tilde q_2t)^{A_r}}\,dt.
\]
Euler's integral and the Kummer--Pfaff transformation yield
\begin{equation}
\begin{aligned}
\widetilde{\mathcal M}_{r,s}^{(p)}
&=
2^{A_r}\varGamma(A_r)
B(\tilde\beta_{r,s}+1,m+2-s)
\widetilde Q^{-A_r}
\\
&\qquad\times
{}_2F_1\left(
A_r,m+2-s;
\tilde c_r;
\frac{\tilde q_2}{\widetilde Q}
\right).
\end{aligned}
\label{eq:k2-global-hypergeometric-app}
\end{equation}
The argument \(\tilde q_2/\widetilde Q\) lies strictly between zero
and one. This is the form used in the globally convergent series.

\subsection{The cases \texorpdfstring{\(p=3,4,5\)}{p=3,4,5}}

Write \(b_r=(pn-n-p-1)/2+r\) and
\(\tilde\beta_{r,s}=b_r+s\).
For \(p=3\), \(m=1\), \(b_r=n-2+r\), and
\(\tilde c_r=n+2+r\). The finite sum is
\[
\widetilde B_{\mu,\varnothing}
\widetilde{\mathcal M}_{r,0}^{(3)}
+
\widetilde B_{\mu,(1)}
\widetilde{\mathcal M}_{r,1}^{(3)}.
\]
For \(p=4\), \(m=2\), \(b_r=(3n-5)/2+r\), and
\(\tilde c_r=b_r+5\). The finite sum is
\[
\widetilde B_{\mu,\varnothing}
\widetilde{\mathcal M}_{r,0}^{(4)}
+
\widetilde B_{\mu,(1)}
\widetilde{\mathcal M}_{r,1}^{(4)}
+
\frac34\widetilde B_{\mu,(1^2)}
\widetilde{\mathcal M}_{r,2}^{(4)}.
\]
For \(p=5\), \(m=3\), \(b_r=2n-3+r\), and
\(\tilde c_r=b_r+6\). The finite sum is
\[
\begin{aligned}
&\widetilde B_{\mu,\varnothing}
\widetilde{\mathcal M}_{r,0}^{(5)}
+
\widetilde B_{\mu,(1)}
\widetilde{\mathcal M}_{r,1}^{(5)}
+
\frac34\widetilde B_{\mu,(1^2)}
\widetilde{\mathcal M}_{r,2}^{(5)}
\\
&\qquad+
\frac12\widetilde B_{\mu,(1^3)}
\widetilde{\mathcal M}_{r,3}^{(5)}.
\end{aligned}
\]
In each case, \(\widetilde{\mathcal M}_{r,s}^{(p)}\) is given by
\eqref{eq:k2-global-hypergeometric-app}.

\subsection{Elementary formulas when \texorpdfstring{\(n=p+1\)}{n=p+1}}

When \(n=p+1\), \(v=0\), and the spacing variables
\(x=\lambda_2\) and \(y=\lambda_1-\lambda_2\) separate the integral.
Consequently,
\[
\widetilde{\mathcal M}_{r,s}^{(p)}
=
\frac{
2^{A_r}
\varGamma(u+m+r+s+1)
\varGamma(m+2-s)
}{
\widetilde Q^{\,u+m+r+s+1}
q_1^{m+2-s}
}.
\]
For \(p=3\), \(n=4\), and \(A_r=6+r\), this reads
\[
\widetilde{\mathcal M}_{r,s}^{(3)}
=
\frac{
2^{6+r}\varGamma(3+r+s)\varGamma(3-s)
}{
\widetilde Q^{\,3+r+s}q_1^{3-s}
},
\quad s=0,1.
\]
For \(p=4\), \(n=5\), and \(A_r=10+r\),
\[
\widetilde{\mathcal M}_{r,s}^{(4)}
=
\frac{
2^{10+r}\varGamma(6+r+s)\varGamma(4-s)
}{
\widetilde Q^{\,6+r+s}q_1^{4-s}
},
\quad s=0,1,2.
\]
For \(p=5\), \(n=6\), and \(A_r=15+r\),
\[
\widetilde{\mathcal M}_{r,s}^{(5)}
=
\frac{
2^{15+r}\varGamma(10+r+s)\varGamma(5-s)
}{
\widetilde Q^{\,10+r+s}q_1^{5-s}
},
\quad s=0,1,2,3.
\]
These expressions involve only gamma functions and rational powers, and
the outer series remains globally convergent.
\section{Generalized Kostka Coefficients up to Degree 6}
\label{app:kostka}

This appendix lists the coefficients in the expansion of the
\(J\)-normalized Jack polynomials at \(\alpha=1/2\) in the Schur
basis
\[
J_\lambda(x;1/2)
=
\sum_{\mu\leq\lambda}
K_{\lambda\mu}(1/2)s_\mu(x).
\]
The matrices are upper triangular when the partitions are arranged in
a linear extension of dominance order.

\begin{table}[htbp]
\centering
\caption{Expansion coefficients \(K_{\lambda\mu}(1/2)\) for
\(|\lambda|=1,2,3\)}
\label{tab:jack_small}
\small
\begin{tabular}{lclcclccc}
\toprule
\multicolumn{2}{c}{$|\lambda|=1$}
& \multicolumn{3}{c}{$|\lambda|=2$}
& \multicolumn{4}{c}{$|\lambda|=3$} \\
\cmidrule(lr){1-2}\cmidrule(lr){3-5}\cmidrule(lr){6-9}
$\lambda\backslash\mu$ & $[1]$ & $\lambda\backslash\mu$ & $[2]$
& $[1^{2}]$ & $\lambda\backslash\mu$ & $[3]$ & $[2,1]$ & $[1^{3}]$ \\
\midrule
$[1]$ & 1 & $[2]$ & 3/2 & 1/2 & $[3]$ & 3 & 3/2 & 0 \\
&  & $[1^{2}]$ & 0 & 2 & $[2,1]$ & 0 & 5/2 & 1 \\
&  &  &  &  & $[1^{3}]$ & 0 & 0 & 6 \\
\bottomrule
\end{tabular}
\end{table}

\begin{table}[htbp]
\centering
\caption{Expansion coefficients \(K_{\lambda\mu}(1/2)\) for \(|\lambda|=4\)}
\label{tab:jack_n4}
\small
\begin{tabular}{lccccc}
\toprule
$\lambda\backslash\mu$ & $[4]$ & $[3,1]$ & $[2^{2}]$ & $[2,1^{2}]$ & $[1^{4}]$ \\
\midrule
$[4]$ & 15/2 & 9/2 & 3/2 & 0 & 0 \\
$[3,1]$ & 0 & 9/2 & 3/2 & 5/2 & 0 \\
$[2^{2}]$ & 0 & 0 & 15/2 & 5/2 & 3/2 \\
$[2,1^{2}]$ & 0 & 0 & 0 & 7 & 3 \\
$[1^{4}]$ & 0 & 0 & 0 & 0 & 24 \\
\bottomrule
\end{tabular}
\end{table}

\begin{table}[htbp]
\centering
\caption{Expansion coefficients \(K_{\lambda\mu}(1/2)\) for \(|\lambda|=5\)}
\label{tab:jack_n5}
\scriptsize
\begin{tabular}{lccccccc}
\toprule
$\lambda\backslash\mu$ & $[5]$ & $[4,1]$ & $[3,2]$ & $[3,1^{2}]$ & $[2^{2},1]$ & $[2,1^{3}]$ & $[1^{5}]$ \\
\midrule
$[5]$ & 45/2 & 15 & 15/2 & 0 & 0 & 0 & 0 \\
$[4,1]$ & 0 & 21/2 & 21/4 & 27/4 & 9/4 & 0 & 0 \\
$[3,2]$ & 0 & 0 & 45/4 & 15/4 & 25/4 & 5/2 & 0 \\
$[3,1^{2}]$ & 0 & 0 & 0 & 12 & 4 & 7 & 0 \\
$[2^{2},1]$ & 0 & 0 & 0 & 0 & 35/2 & 7 & 9/2 \\
$[2,1^{3}]$ & 0 & 0 & 0 & 0 & 0 & 27 & 12 \\
$[1^{5}]$ & 0 & 0 & 0 & 0 & 0 & 0 & 120 \\
\bottomrule
\end{tabular}
\end{table}

\begin{table}[htbp]
\centering
\caption{Expansion coefficients \(K_{\lambda\mu}(1/2)\) for \(|\lambda|=6\)}
\label{tab:jack_n6}
\scriptsize
\resizebox{\textwidth}{!}{%
\begin{tabular}{lccccccccccc}
	\toprule
	$\lambda\backslash\mu$ & $[6]$ & $[5,1]$ & $[4,2]$ & $[4,1^{2}]$ & $[3^{2}]$ & $[3,2,1]$ & $[3,1^{3}]$ & $[2^{3}]$ & $[2^{2},1^{2}]$ & $[2,1^{4}]$ & $[1^{6}]$ \\
	\midrule
	$[6]$ & 315/4 & 225/4 & 135/4 & 0 & 45/4 & 0 & 0 & 0 & 0 & 0 & 0 \\
	$[5,1]$ & 0 & 30 & 18 & 21 & 6 & 21/2 & 0 & 0 & 0 & 0 & 0 \\
	$[4,2]$ & 0 & 0 & 189/8 & 63/8 & 63/8 & 18 & 45/8 & 45/8 & 15/8 & 0 & 0 \\
	$[4,1^{2}]$ & 0 & 0 & 0 & 27 & 0 & 27/2 & 18 & 0 & 6 & 0 & 0 \\
	$[3^{2}]$ & 0 & 0 & 0 & 0 & 45 & 45/2 & 0 & 0 & 15 & 0 & 0 \\
	$[3,2,1]$ & 0 & 0 & 0 & 0 & 0 & 25 & 10 & 10 & 15 & 7 & 0 \\
	$[3,1^{3}]$ & 0 & 0 & 0 & 0 & 0 & 0 & 45 & 0 & 15 & 27 & 0 \\
	$[2^{3}]$ & 0 & 0 & 0 & 0 & 0 & 0 & 0 & 315/4 & 105/4 & 63/4 & 45/4 \\
	$[2^{2},1^{2}]$ & 0 & 0 & 0 & 0 & 0 & 0 & 0 & 0 & 63 & 27 & 18 \\
	$[2,1^{4}]$ & 0 & 0 & 0 & 0 & 0 & 0 & 0 & 0 & 0 & 132 & 60 \\
	$[1^{6}]$ & 0 & 0 & 0 & 0 & 0 & 0 & 0 & 0 & 0 & 0 & 720 \\
	\bottomrule
\end{tabular}
}
\end{table}

\bibliographystyle{imsart-nameyear}
\bibliography{bibtex}

\end{document}